\definecolor{orcidlogocol}{HTML}{A6CE39}
\theoremstyle{plain}
\newtheorem{theorem}{Theorem}[section]
\newtheorem{corollary}[theorem]{Corollary}
\newtheorem{lemma}[theorem]{Lemma}
\theoremstyle{definition}
\theoremstyle{remark}
\title{%
    Mean field limits for interacting diffusions with colored noise:\\
    phase transitions and spectral numerical methods
}
\newcommand{\email}[1]{\href{#1}{#1}}
\newcommand{\orcid}[1]{\href{https://orcid.org/#1}{\includegraphics[width=.4cm]{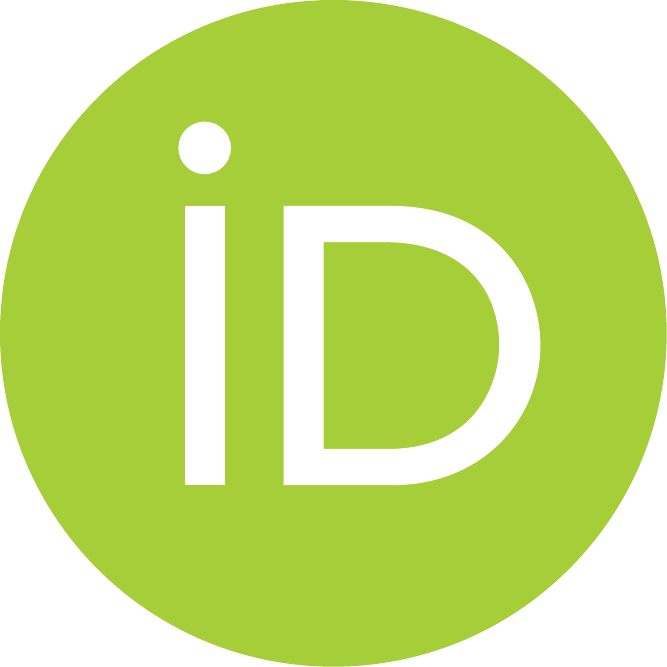}}}
\author{%
S.N. Gomes\thanks{Mathematics Institute, University of Warwick (\email{susana.gomes@warwick.ac.uk})}%
\hspace{2mm}\orcid{0000-0002-8731-367X}
\and G.A. Pavliotis\thanks{Department of Mathematics, Imperial College London (\email{g.pavliotis@imperial.ac.uk})}%
\hspace{2mm}\orcid{0000-0002-3468-9227}%
\and U. Vaes\thanks{Department of Mathematics, Imperial College London (\email{u.vaes13@imperial.ac.uk})}%
\hspace{2mm}\orcid{0000-0002-7629-7184}%
}
\date{\today}
\begin{document}
\maketitle

\begin{abstract}
    In this paper we consider systems of weakly interacting particles driven by colored noise in a bistable potential,
    and we study the effect of the correlation time of the noise on the bifurcation diagram for the equilibrium states.
    We accomplish this by solving the corresponding McKean--Vlasov equation using a Hermite spectral method,
    and we verify our findings using Monte Carlo simulations of the particle system.
    We consider both Gaussian and non-Gaussian noise processes,
    and for each model of the noise we also study the behavior of the system in the small correlation time regime
    using perturbation theory.
    The spectral method that we develop in this paper can be used for solving both linear and nonlinear, local and nonlocal (mean field) Fokker--Planck equations,
    without requiring that they have a gradient structure.%
    \\[.1cm]

    \noindent \textbf{Keywords}: McKean--Vlasov PDEs, Nonlocal Fokker--Planck equations, Interacting particles, Desai--Zwanzig model, Colored noise, Hermite spectral methods, Phase transitions.\\[.1cm]

    \noindent \textbf{AMS}: %
        35Q70, % Partial differential equations: Equations of mathematical physics and other areas of application: PDEs in connection with mechanics of particles and systems
        35Q83, % Partial differential equations: Equations of mathematical physics and other areas of application: Vlasov-like equations
        35Q84, % Partial differential equations: Equations of mathematical physics and other areas of application:	Fokker-Planck equations
        65N35, % Numerical analysis: Partial differential equations, boundary value problems: Spectral, collocation and related methods
        65M70, % Numerical analysis: Partial differential equations, initial value and time-dependent initial-boundary value problems: Spectral, collocation and related methods
        82B26, % Statistical mechanics, structure of matter: Equilibrium statistical mechanics: Phase transitions (general)
\end{abstract}

\newcommand{\m}{m}
\renewcommand{\l}{\ell}

% \section*{To do}
% \begin{itemize}
% \item \susana{%
%     Need to cite the following: Arnold in propagation of chaos,
%     Duong and Pavliotis~\cite{duongmean} (although we cite them in the numerical experiments already).
%     Need to cite \cite{KPS2004} but not sure where exactly.
%     Also \cite{1812.06003,1811.09387}}
% \item \urbain{Exponential autocorrelation function is too restrictive?}
% \end{itemize}
\section{Introduction}
Systems of interacting particles appear in a wide variety of applications,
ranging from plasma physics and galactic dynamics~\cite{BinneyTremaine2008} to mathematical biology~\cite{Farkhooi2017,Lucon2016},
the social sciences~\cite{GPY2017,Motsch2014},
active media~\cite{bain2017critical},
dynamical density functional theory (DDFT)~\cite{MR2968853,goddard2012general}
and machine learning~\cite{2018arXiv180504035L,2018arXiv180500915R,2018arXiv180501053S}.
They can also be used in models for
cooperative behavior~\cite{Dawson1983},
opinion formation~\cite{GPY2017},
and risk management~\cite{GPY2012},
and also in algorithms for global optimization~\cite{MR3597012}.

In most of the existing works on the topic,
the particles are assumed to be subject to thermal additive noise
that is modeled as a white noise process,
i.e.\ a mean-zero Gaussian stationary process that is delta-correlated in time.
There is extensive literature studying the behavior of these systems;
we mention for example works on the rigorous passage to the mean field limit~\cite{Oelschlager1984},
the long-time behavior of solutions
(see~\cite{Dawson1983,shiino1987} for a case of a ferromagnetic (quartic) potential,
and~\cite{GomesEtAl2018} for more general potentials),
multiscale analysis~\cite{GomesPavliotis2017},
and phase transitions~\cite{Tugaut2014}.
% Modeling the noise in the system as white noise leads to a (nonlinear) Markov process,
% as shown in the aforementioned papers.

In a more realistic scenario,
the system has memory and the hypothesis of Markovianity does not hold~\cite{horsthemke1984noise,igarashi1992velocity,igarashi1988non}.
This memory can be modeled by using \emph{colored} noise,
i.e.\ noise with a nonzero correlation time (or, more precisely, a nonsingular autocorrelation function),
which is the approach we take in this paper.
For simplicity, we will assume that the noise is additive and that it can be represented by a finite-dimensional Markov process,
as in the recent study~\cite{duongmean} on mean field limits for non-Markovian interacting particles.

In this paper we will study the dynamics of a system of interacting particles of the Desai--Zwanzig type,
interacting via a quadratic Curie--Weiss potential.
The system of interacting particles is modeled by a system of stochastic differential equations (SDEs):
\begin{equation}
    \label{eq:sde-desai-zwanzig}
    \frac{\d X_t^i}{\d t} = -\left(\derivative*{1}[V]{x}(X_t^i)  + \theta \, \left(X_t^i-\frac{1}{N}\sum_{j=1}^N X_t^j\right)\right) + \sqrt{2\beta^{-1}} \, \xi_t^i, \qquad i = 1, \dotsc, N,
\end{equation}
where $N$ is the number of particles,
$V(\cdot)$ is a confining potential,
$\theta$ is the interaction strength,
$\beta$ is the inverse temperature of the system,
and $\xi_t^i$ are independent, identically distributed (i.i.d.) noise processes.

Before discussing the Desai--Zwanzig model with colored noise,
we present a brief overview of known results~\cite{Dawson1983,shiino1987} for the white noise problem.
When $\xi_t^i$ are white noise processes,
we can pass to the mean field limit $N \to \infty$ in \cref{eq:sde-desai-zwanzig}
and obtain a nonlinear and nonlocal Fokker--Planck equation,
known in the literature as a McKean--Vlasov equation,
for the one-particle distribution function $\rho(x,t)$:
\begin{equation}
\label{eq:mckean_white_noise}%
\derivative{1}[\rho]{t} = \derivative{1}{x} \left(V'(x) \, \rho + \theta \, \left(x - \int_{\real} x \, \rho(x, t) \, \d x \right) \, \rho + \beta^{-1} \, \derivative{1}[\rho]{x}\right).
\end{equation}
The McKean--Vlasov equation~\eqref{eq:mckean_white_noise} is a gradient flow with respect to the quadratic Wasserstein metric for the free energy functional
\begin{equation}
    \label{eq:free_energy}%
    \mathcal F[\rho] = \beta^{-1} \int_{\real} \rho(x) \, \ln \rho(x) \, \d x + \int_{\real} V(x) \, \rho(x) \, \d x + \frac{\theta}{2} \int_{\real} \int_{\real} F(x - y) \, \rho(x) \, \rho(y) \, \d x \, \d y,
\end{equation}
where $F(x) := x^2/2$ is the interaction potential.
The long-time behavior of solutions depends on the number of local minima of the confining potential $V$~\cite{Tugaut2014}.
It follows directly from \cref{eq:mckean_white_noise} that any steady-state solution $\rho_{\infty}(x)$ solves,
together with its first moment,
the following system of equations:
\begin{subequations}
\label{eq:mckean_steady_white}
\begin{align}
    \label{eq:mckean_steady_white_noise_rho} & \derivative{1}{x} \left(V'(x) \, \rho_{\infty}(x) + \theta \, \left(x - m \right) \, \rho_{\infty}(x) + \beta^{-1} \, \derivative{1}[\rho_{\infty}]{x}(x) \right) = 0, \\
    \label{eq:mckean_steady_white_noise_m} & m = \int_{\real} x \, \rho_{\infty} (x)\, \d x.
\end{align}
\end{subequations}
Since \cref{eq:mckean_steady_white_noise_rho} is, for $m$ fixed, the stationary Fokker--Planck equation associated with the overdamped Langevin dynamics in the confining potential
\begin{equation}
\label{eq:effective-potential}
V_{\textrm{eff}}(x;m, \theta) = V(x) + \frac{\theta}{2}(x-m)^2,
\end{equation}
solutions can be expressed explicitly as
\begin{equation}
    \label{eq:one_parameter_family}
    \rho_{\infty}(x;m,\beta,\theta) := \frac{1}{\mathcal Z(m, \beta, \theta)} \e^{-\beta V_{\textrm{eff}}(x; m, \theta)},
\end{equation}
where $\mathcal Z(m, \beta, \theta)$ is the normalization constant (partition function);
see~\cite{Dawson1983,GomesEtAl2018,GomesPavliotis2017} for more details.
By substitution in \cref{eq:mckean_steady_white_noise_m},
a scalar fixed-point problem is obtained for $m$,
the \emph{self-consistency equation}:
\begin{equation}
    \label{eq:self_consistency_white_noise}%
    m = \int_{\real} x \, \rho_{\infty}(x; m, \beta, \theta) \, \d x =: R(m, \beta, \theta).
\end{equation}
The stability of solutions to \cref{eq:mckean_steady_white}
depends on whether they correspond to a local minimum (stable) or to a local maximum/saddle point (unstable) of the free energy functional.
The free energy along the one-parameter family~\eqref{eq:one_parameter_family},
with parameter $m$, can be calculated explicitly~\cite{GomesPavliotis2017},
\begin{align*}
    % \label{eq:free_energy_one_parameter_family}%
    \mathcal F[\rho_{\infty}(\, \cdot \,; m, \beta, \theta)] = -\beta^{-1} \, \ln \mathcal Z(m, \beta, \theta) - \frac{\theta}{2} \big( R(m,\beta,\theta) - m \big)^2,
\end{align*}
from which we calculate that
\begin{align*}
    \label{eq:free_energy_derivative}%
    \derivative{1}{m} \, \mathcal F[\rho_{\infty}(\, \dummy \,; m, \beta, \theta)] =
    - \beta \theta^2 \big( R(m, \beta, \theta) - m \big) \, \mathrm{Var}(\rho_{\infty}(\cdot \,;\, m,\beta,\theta)),
\end{align*}
where, for a probability density $\psi$,
\[
    \mathrm{Var}(\psi) := \int_{\real} \left( x - \int_{\real} \psi(x) \, \d x \right)^2 \, \psi(x) \, \d x.
\]
Though incomplete,
this informal argument suggests that the stability of a steady-state solution can also be inferred from the slope of $R(m, \beta, \theta) - m$ at the corresponding value of $m$:
if this slope is positive, the equilibrium is unstable, and conversely.
The self-consistency map and the free energy of $\rho_{\infty}(x; m, \beta, \theta)$, for a range of values of $m$,
are illustrated in \cref{fig:white_noise_free_energy} for the bistable potential $V(x) = \frac{x^4}{4} - \frac{x^2}{2}$.
\begin{figure}[ht]
    \centering
    \includegraphics[width=0.8\linewidth]{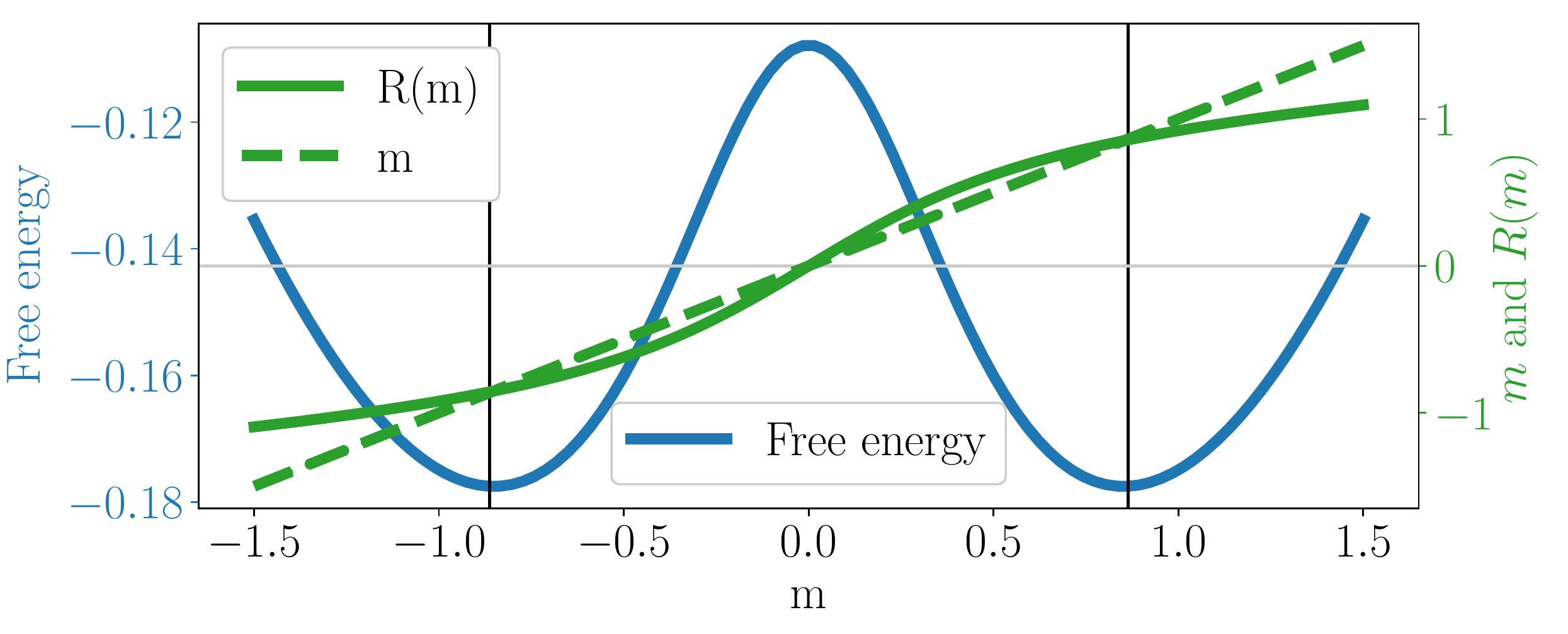}
    \caption{%
        Free energy~\eqref{eq:free_energy} of the one-parameter family~\eqref{eq:one_parameter_family} of probability densities that solve \cref{eq:mckean_steady_white_noise_rho} for some value of $m$ (in blue),
        and associated first moment $R(m)$ (in green), for fixed $\theta = 1$ and $\beta = 5$.
        Along the one-parameter family, $m = 0$ is a local maximum of the free energy,
        and it therefore corresponds to an unstable steady state of the McKean--Vlasov equation.
    } \label{fig:white_noise_free_energy}
\end{figure}
It is well-known that, when $V(\cdot)$ is an even potential,
\cref{eq:mckean_white_noise} possesses a unique, mean-zero steady-state solution for sufficiently large temperatures (i.e., small $\beta$).
As the temperature decreases,
this solution loses its stability and two new solutions of the self-consistency equation emerge,
corresponding to a pitchfork bifurcation;
see \cite{Dawson1983,GomesPavliotis2017} for details.
% The critical inverse temperature $\beta_C$,
% at which bifurcation occurs,
% satisfies the following equation:
% \begin{equation*}
% \label{eq:CriticalTemperatureWhiteNoise}
% \int_\real x^2 \rho_\infty(x;m=0,\beta,\theta) \, \d x = \frac{1}{\beta \theta}.
% \end{equation*}

As mentioned above,
in this paper we focus on the case where the noise processes $\xi_t^i$ in \cref{eq:sde-desai-zwanzig} have a nonzero correlation time,
and in particular we assume that each noise process can be represented using a (possibly multi-dimensional) SDE,
in which case \cref{eq:sde-desai-zwanzig} leads to a Markovian system of SDEs in an extended phase space.
The colored noise will be modeled by either an Ornstein--Uhlenbeck process, harmonic noise~\cite[Example 8.2]{pavliotis2011applied},
or a non-Gaussian reversible diffusion process.

Though more realistic,
the use of colored noise presents us with some difficulties.
First,
the introduction of an extra SDE for the noise breaks
the gradient structure of the problem;
while we can still pass formally to the limit $N \to \infty$ in \cref{eq:sde-desai-zwanzig}
and obtain a McKean--Vlasov equation for the associated one-particle distribution function,
it is no longer possible to write a free energy functional, such as \cref{eq:free_energy}, that is dissipated by this equation.
Second, the McKean--Vlasov equation is now posed in an extended phase space,
which increases the computational cost of its numerical solution via PDE methods.
And third,
it is no longer possible to obtain an explicit expression for the one-parameter family of (possible) stationary solutions to the mean field equation,
as was possible in \cref{eq:one_parameter_family},
which renders the calculation of steady states considerably more difficult.

When the correlation time of the noise is small,
the latter difficulty can be somewhat circumvented by constructing
an approximate one-parameter family of solutions through appropriate asymptotic expansions in terms of the correlation time,
from which steady-state solutions of the McKean--Vlasov dynamics can be extracted by solving a self-consistency equation similar to~\eqref{eq:self_consistency_white_noise},
see \cref{eq:approximate_self_consistency}.
Outside of the small correlation time regime, however,
finding the steady-states of the McKean--Vlasov equation requires
a numerical method for PDEs in all but the simplest cases.

In this work,
we propose a novel Hermite spectral method
for the time-dependent and steady-state equations,
applicable to the cases of both white and colored noise.
Discretized in a basis of Hermite functions,
the McKean--Vlasov equation becomes a system of ordinary differential equations
with a quadratic nonlinearity originating from the interaction term.
In contrast with other discretization methods for PDEs,
the use of (possibly rescaled) Hermite functions for the problem under consideration leads to an efficient numerical method,
first because Hermite functions have very good approximation properties in $L^2$,
but also because all the differential operators appearing in the McKean Vlasov equation
lead to sparse matrices in Hermite space,
with a small bandwidth related to the polynomial degree of $V$
(provided that a suitable ordering of the multi-indices is employed).
To solve the finite-dimensional system of equations obtained after discretization of the time-dependent equation,
we employ either the Runge--Kutta 45 method (RK45) or
a linear, semi-implicit time-stepping scheme.

\iflong%
To assess the performance of our numerical method,
we compare its efficiency in the white noise case
with that of the finite volume scheme developed in~\cite{carrillo2015finite},
the applicability of which depends on the existence a gradient structure of \cref{eq:mckean_white_noise}.
\fi%
We also verify that our results agree with known analytical solutions in simple settings,
and with explicit asymptotic expansions in the small correlation time regime.
We then use our spectral method,
together with asymptotic expansions and Monte Carlo (MC) simulations of the particle system,
to construct the bifurcation diagram of the first moment of the steady-state solutions as a function of the inverse temperature.

For the reader's convenience, we summarize here the main results of this paper:
\begin{enumerate}
    \item The systematic study of the effect of colored noise,
        both Gaussian and non-Gaussian, on the long-time behavior of the McKean--Vlasov mean field equation,
        including the effect of colored noise on the structure and properties of phase transitions.
    \item The development and analysis of a spectral numerical method for the solution of linear or nonlinear,
        local or nonlocal Fokker--Planck-type equations.
        In particular, our method does not depend on an underlying gradient structure for the PDE.
\end{enumerate}

The rest of the paper is organized as follows.
In \cref{sec:model},
we present the models for the colored noise and
we derive formally the mean field McKean--Vlasov equation associated with the interacting particle system.
In \cref{sec:numerics},
we present the numerical methods used to (a) solve the time-dependent and steady-state Fokker--Planck (or McKean--Vlasov) equations
and (b) solve the finite-dimensional system of interacting diffusions~\eqref{eq:sde-desai-zwanzig}.
\iflong
In~\cref{ssub:asymptotic_analysis_for_the_galerkin_formulation},
we study the performance of our numerical method in the small correlation time regime,
and we verify numerically the convergence rates to the white noise solution in the limit where the correlation time tends to 0.
\fi
In \cref{sec:results},
we describe our methodology for
constructing the bifurcation diagrams and we present the associated results.
\Cref{sec:conclusions} is reserved for conclusions and perspectives for future work.

\section{The model}%
\label{sec:model}
We consider the following system of weakly interacting diffusions,
\begin{equation}
    \label{eq:main-sde}%
    \d X_t^i = - \left( V'(X_t^i) + \theta\left(X_t^i - \frac{1}{N}\sum_{j=1}^N X_t^j\right)\right) \, \d t + \sqrt{2\beta^{-1}} \, \eta^i_t  \, \d t, \quad 1 \leq i \leq N,
\end{equation}
where the noise processes $\eta^i_t$ are independent, mean-zero, second-order stationary processes
with almost surely continuous paths and autocorrelation function $K(t)$.
In the rest of this paper,
we will assume that the interaction strength $\theta$ is fixed and equal to 1
and we will use the inverse temperature $\beta^{-1}$ as the bifurcation parameter.
We will consider two classes of models for the noise:
Gaussian stationary noise processes with an exponential correlation function,
and non-Gaussian noise processes that we construct by using the overdamped Langevin dynamics in a non-quadratic potential.
% More precisely,
% we will consider the following noise processes:

\paragraph{Gaussian noise}
Stationary Gaussian processes in $\real^n$ with continuous paths and an exponential autocorrelation function
are solutions to an SDE of Ornstein--Uhlenbeck type:
\begin{equation}
    \label{eq:noise-general}%
    \d \vect{Y}_t^i =  \mat A \, \vect{Y}_t^i \, \d t + \sqrt{2} \, \mat D \, \d \vect W_t^i, \qquad i=1,\dotsc,N,
\end{equation}
where $\mat A, \mat D$ are $n\times n$ matrices satisfying Kalman's rank condition~\cite[Chapter 9]{lorenzi2006analytical},
and $\vect W_t^i$, $1 \leq i \leq n$, are independent white noise processes in $\real^n$.
We assume here that the noise is obtained by projection as $\eta_t^i = \ip{\vect Y_t^i}{\vect y_{\eta}}$,
where $\ip{\cdot}{\cdot}$ denotes the Euclidean inner product,
for some vector $\vect y_{\eta} \in \real^n$.
Throughout this paper we will consider two particular examples,
namely the scalar OU process and the harmonic noise~\cite[Chapter 8]{pavliotis2011applied}.
% which will be referred to by the acronyms \textbf{OU} and \textbf{H}.
\begin{description}
    \item [(OU)] Scalar Ornstein--Uhlenbeck process:
        \begin{equation*}
            \label{eq:OU-noise}
            \d \eta_t^i = - \eta_t^i \, \d t + \sqrt{2} \, \d W_t^i.
        \end{equation*}
        The associated autocorrelation function is
        \begin{equation*}
            \label{eq:OU-autocorr}
            K_{OU}(t) = e^{-|t|}.
        \end{equation*}
\item [(H)] Harmonic noise:
    \begin{equation*}
        \mat A = \begin{pmatrix} 0 & 1 \\ -1  & - \gamma \end{pmatrix}, \quad
        \mat D = \begin{pmatrix} 0 & 0 \\ 0  & \sqrt{\gamma} \end{pmatrix}, \quad
        \vect y_{\eta} = \begin{pmatrix} 1 \\ 0 \end{pmatrix}.
    \end{equation*}
    In this case the noise is the solution to the Langevin equation,
    with the first and second components of $\vect Y$ corresponding to the position and velocity, respectively.
    Throughout this paper we will assume $\gamma = 1$ for simplicity.
    The associated autocorrelation function of $\eta^i$ is given by
    \begin{equation*}
        \label{eq:H-autocorr}
        K_{H}(t) =  e^{-\frac{|t|}{2}}\left(\cos\left(\frac{\sqrt{3}}{2} t\right) + \frac{\sqrt{3}}{3}\sin\left(\frac{\sqrt 3}{2} t\right)\right).
    \end{equation*}
\end{description}

\paragraph{Non-Gaussian noise} In this case,
instead of \cref{eq:noise-general} we consider
\begin{equation*}
    \label{eq:noise-NL}
    \d \eta_t^i =  - \derivative*{1}[V_{\eta}]{\eta} (\eta_t^i) \, \d t + \sqrt{2} \, \d {W}_t^i,
\end{equation*}
where now $V_{\eta}$ is a smooth non-quadratic confining potential satisfying the mean-zero condition:
\begin{equation}
    \label{eq:centering_condition_potential}
    \int_{\real} \eta \, \e^{-V_{\eta}(\eta)} \, \d \eta = 0.
\end{equation}
We consider the following choices for $V_{\eta}$:
\begin{description}
    \item [(B)] The bistable potential $V_{\eta}(\eta) = \eta^4/4 - \eta^2/2$.
    \item [(NS)] The shifted tilted bistable potential
        \begin{equation}
            \label{eq:potential_for_non_symmetric_case}
            V_{\eta}(\eta) = \frac{{(\eta-\alpha)}^4}{4} - \frac{{(\eta - \alpha)}^2}{2} + (\eta - \alpha),
        \end{equation}
        with the constant $\alpha \approx 0.885$ such that \cref{eq:centering_condition_potential} is satisfied.
\end{description}

\iflong
\begin{remark}
For the two Gaussian noise processes we consider,
it would have been equivalent (by a change of variables)
to include the inverse temperature $\beta$ in the noise equation~\cref{eq:noise-general} rather than in~\cref{eq:main-sde}.
This is not the case for non-Gaussian noise processes,
for which including the temperature in the noise equation leads to an effective diffusion coefficient,
in the limit as the correlation time tends to 0,
with a nonlinear dependence on $\beta$.
\end{remark}
\fi

% \begin{remark}
% In principle,
% it would also be possible to assume that
% the processes $\eta_t^i$ are expressed as $\eta_t^i = \ip{\vect{Y}_t^i}{{\vect{y}}_{\eta}}[H]$,
% where $\ip{\cdot}{\cdot}[H]$ denotes the inner product in a general, possibly infinite-dimensional, Hilbert space $H$,
% $\vect{Y}_t^i: \real \mapsto H$ are i.i.d.\ stochastic processes in $H$
% and ${\vect{y}}_{\eta}$ is a fixed element of $H$.
% \begin{equation}
%     \int_{\real^n} \ip{\mathbf y}{\mathbf y_{\eta}} \, \e^{-V_{\eta}(\mathbf y)} \, \d \mathbf y = 0.
% \end{equation}
% In this paper we consider only the case where $H = \real^n$,
% but we note that, in principle, $H$ could be an infinite-dimensional space.
% We now list the noise processes that we will consider in each of these two classes.
% \end{remark}

\subsection{Mean field limit}
\label{ssub:mean-field}
\revision{%
    % If chaotic initial conditions are assumed for~\eqref{eq:sde-desai-zwanzig}, i.e.\ $X^i_0 \sim \rho_0$ for $i = 1, \dotsc, N$
    % and some probability density function $\rho_0$,
    % then the stochastic system~\eqref{eq:sde-desai-zwanzig} is exchangeable~\cite{Dawson1983}
    % and it is possible to prove the \emph{propagation of chaos}:
    % the empirical measure of the particle system, given by $\mu^N_t = \frac{1}{N} \sum_{i=1}^{N} \delta_{(X^i_t, \vect Y^i_t)}$
    % converges in an appropriate sense to a deterministic measure $\mu_t$ whose density $\rho(x, \vect y, t)$ satisfies
    % the following \emph{mean-field equation}:
    For weakly interacting diffusions,
    the derivation of the mean field McKean--Vlasov PDE is a standard, well-known result~\cite{Dawson1983,shiino1987,Oelschlager1984}.
    When $\xi_t^i$ in~\eqref{eq:sde-desai-zwanzig} are colored noise processes,
    it is also possible to pass to the mean field limit $N \to \infty$ in \cref{eq:sde-desai-zwanzig}
    and to obtain a McKean--Vlasov equation
    for the one-particle distribution function $\rho(x, \vect y, t)$:
}
\begin{subequations}
\label{eq:mckean_mean-field_dynamic}
\begin{equation}
\label{eq:FP-general}
\derivative{1}[\rho]{t} = \derivative{1}{x} \left(V' \rho + \theta \, (x - m(t)) \, \rho - \sqrt{2\beta^{-1}} \, \ip{\vect y_i}{\vect y_{\eta}} \, \rho\right)
+ \mathcal L_{\vect y}^* \rho,
\end{equation}
with the dynamic constraint
\begin{equation}
    \label{eq:self-consistency}
    m(t) = \int_{\real} \int_{\real^n} x \, \rho(x,\vect y,t) \, \d \vect y \, \d x.
\end{equation}
\end{subequations}
Here $\vect y$ are the noise variables,
denoted by $(\eta, \lambda)$ in the case of harmonic noise and just $\eta$ otherwise,
and
\begin{equation*}
\label{eq:noise-operator}
\mathcal{L}_{\vect y}^* \rho =
\begin{cases}
\derivative{1}{ {\eta} }\left(\eta \, \rho + \derivative{1}[\rho]{ {\eta} }\right), & \textrm{for scalar OU noise,}\\
\derivative{1}{ {\lambda} } \left(\lambda \, \rho + \derivative{1}[\rho]{ {\lambda} } \right) + \left(\eta \, \derivative{1}[\rho]{ {\lambda} } - \lambda \, \derivative{1}[\rho]{ {\eta} }\right), & \textrm{for harmonic noise,}\\
\derivative{1}{ {\eta} } \left(\derivative*{1}[V_{\eta}]{ {\eta} } \, \rho + \derivative{1}{ {\eta} } \rho\right), & \textrm{for non-Gaussian noise}.
\end{cases}
\end{equation*}
A formal derivation of the mean field limit is presented in~\cite{thesis_urbain},
and this derivation can be justified rigorously using the results in~\cite{Eberle,Monmarche}.

The main goal of this paper is the study of the effect of colored noise on
the structure of the bifurcation diagram for the McKean--Vlasov equation with colored noise, \cref{eq:FP-general,eq:self-consistency}.
In other words, we want to gain insight into the number of solutions to the following stationary PDE and associated constraint (self-consistency equation):
\begin{subequations}
\begin{equation}\label{eq:SS-general}
    \derivative{1}{x} \left(V'(x) \, \rho + \theta \, (x - m) \, \rho - \sqrt{2\beta^{-1}} \, \ip{\vect y_i}{\vect y_{\eta}} \, \rho\right)
    + \mathcal L_{\vect y}^* \rho = 0,
\end{equation}
% coupled with the self-consistency equation
\begin{equation}
    \label{eq:self-consistency-stationary}
    m = \int_{\real} \int_{\real^n} x \, \rho(x,\vect y) \, \d \vect y \, \d x.
\end{equation}
\end{subequations}
Although there still exists,
for fixed $\beta$ and fixed $\theta$,
a one-parameter family of solutions to~\eqref{eq:SS-general} (with parameter $m$),
which we will denote by $\{\rho_{\infty}(x, \vect y; m, \beta, \theta)\}_{m \in \real}$,
no closed form is available for these solutions.
This is because the detailed balance condition no longer holds in the presence of colored noise,
i.e.\ the probability flux at equilibrium does not vanish.
\revision{%
    Here, by probability flux, we mean the argument of the divergence in the Fokker--Planck operator;
}
see~\cite[Section 4.6]{pavliotis2011applied}.
% but it is not in general possible to obtain a closed expression for the solutions.
% Even though the asymptotic calculations are interesting in their own right,
% they cannot be expected to provide accurate results when $\varepsilon$ is significantly greater than 0,
% and indeed we will see in \cref{sec:results} examples where they lead to inaccurate results from $\varepsilon \approx 0.5$.
% One possible solution to this issue would be to calculate more terms in the asymptotic expansion,
% but the calculations quickly become untractable for higher order corrections.

% For this reason,
% we will also compute the steady states numerically via
% both Monte Carlo simulations of the particle system~\eqref{eq:main-sde}
% and the numerical solution of~\eqref{eq:SS-general} using a spectral method.

\subsection{The white noise limit}%
\label{sub:the_white_noise_limit}

To study the limit of small correlation time,
it will be convenient to rescale the noise as
\begin{equation*}
    \label{eq:scaling_colored_noise}
    \eta^i_t \rightarrow \zeta \, \eta_{t/\varepsilon^2}^i / \varepsilon,
\end{equation*}
where $\varepsilon$ is a time scaling parameter,
and $\zeta$ is a model-dependent parameter ensuring that the autocorrelation function of the rescaled noise,
given by $\zeta^2 \, K(t/\varepsilon^2)/\varepsilon^2$, satisfies
\[
    \int_{0}^{\infty} \zeta^2 K(t/\varepsilon^2) / \varepsilon^2 \, \d t = \int_{0}^{\infty} \zeta^2 K(t) \, \d t = \frac{1}{2}.
\]
Then the autocorrelation of the noise converges to a Dirac delta when $\varepsilon \to 0$,
and it can be shown that, in this limit,
the solution of~\cref{eq:main-sde} converges to that of
\begin{equation*}
    \label{eq:model:homogenized-sde}
    \d X_t^i =\left( - V'(X_t^i) - \theta\left(X_t^i - \frac{1}{N}\sum_{j=0}^N X_t^j\right)\right) \, \d t + \sqrt{2\beta^{-1}} \, \d W^i_t, \quad i = 1, \dotsc  N,
\end{equation*}
where $W^i$, $i = 1, \dotsc  N$, are independent Wiener processes;
see~\cite{MR0476129} and~\cite[Chapter 11]{pavliotis2008multiscale}.
While not strictly necessary,
including the parameter $\zeta$ is convenient to obtain simpler formulas.
The value of $\zeta$ for each of the noise models considered in this paper is presented in~\cref{table:values_of_zeta}.
For the models \textbf{B} and \textbf{NS}, $\zeta$ was calculated numerically and rounded to three significant figures in this table.

\begin{table}[ht]
    \centering
    \caption{Value of $\zeta$}
    \label{table:values_of_zeta}
    \begin{tabular}{l|l|l|l|l}
        Model   & \textbf{OU}  & \textbf{H}   & \textbf{B}  & \textbf{NS} \\
        \hline
        $\zeta$ & $1/\sqrt{2}$ & $1/\sqrt{2}$ & $0.624$     & $0.944$
    \end{tabular}
\end{table}

In view of the convergence of the solution of the finite-dimensional particle system when $\varepsilon \to 0$,
we expect that also the $x$-marginals of the steady-state solutions to the McKean--Vlasov equation with colored noise,
obtained by solving \cref{eq:SS-general,eq:self-consistency-stationary},
should converge to their white-noise counterparts as $\varepsilon \to 0$.
It turns out that this is the case and,
using asymptotic techniques from~\cite{horsthemke1984noise},
it is possible to approximate the solutions $\rho_{\infty}(x, \vect y; m, \beta, \theta)$ to \cref{eq:SS-general} by a power series expansion in $\varepsilon$;
using a superscript to emphasize the dependence on $\varepsilon$,
\begin{equation}
\label{eq:epsilon-expansion}
\rho_\infty^{\varepsilon}(x,\vect y; m, \beta, \theta) = p_0(x,\vect y; m, \beta, \theta) + \varepsilon \, p_1(x,\vect y; m, \beta, \theta) + \varepsilon^2 \, p_2(x,\vect y; m, \beta, \theta) + \dotsb,
\end{equation}
From \cref{eq:epsilon-expansion}, we obtain a power series expansion for the $x$-marginal by integrating out the noise variable:
\begin{equation}
    \label{eq:x_marginal_asymptotic_expansion}
    \begin{aligned}
    \rho^\varepsilon_\infty(x;m,\beta,\theta)
    &= \int_{\real^n} \rho^{\varepsilon}_{\infty}(x, \vect y; m, \beta, \theta) \, \d \vect y\\
    &=: \rho_{\infty}(x; m, \beta, \theta) + \varepsilon \, p_1(x; m, \beta, \theta) + \varepsilon^2 \, p_2(x; m, \beta, \theta) + \dotsb,
    \end{aligned}
\end{equation}
The methodology to obtain expressions for the terms works by
substituting \cref{eq:epsilon-expansion} in \cref{eq:SS-general} and grouping the terms in powers of $\varepsilon$ in the resulting equation.
This leads to a sequence of equations that can be studied using standard techniques.
Details of the analysis leading to an explicit expression of the first nonzero correction in~\eqref{eq:x_marginal_asymptotic_expansion}
can be found in~\cite[Section 8]{horsthemke1984noise} for the particular case of the \textbf{OU} noise,
and in~\cite{thesis_urbain} for the other noise models we consider.

The order of the first nonzero correction in this expansion depends on the model:
it is equal to 1 for model \textbf{NS}, to 2 for models \textbf{OU} and \textbf{B}, and to 4 for model \textbf{H}.
In all cases, the first nontrivial term in the series expansion~\eqref{eq:x_marginal_asymptotic_expansion} can be calculated explicitly
(possibly up to constant coefficients that have to be calculated numerically).
For scalar Ornstein--Uhlenbeck noise, for example,
we have,
omitting the dependence of $V_{\textrm{eff}}$ (the effective potential defined in \cref{eq:effective-potential}) on $m$ and $\theta$ for notational convenience,
\begin{align}
    \label{eq:p-asymptotic-OU}
    \rho^\varepsilon_\infty(x;m,\beta, \theta)
    = \rho_{\infty}(x;m,\beta, \theta)\left[1+\varepsilon^2\left(C_{OU} - \frac{\beta}{2} \left(V_{\textrm{eff}}'{\left (x \right )}\right)^{2} + V_{\textrm{eff}}''{\left (x \right )}\right)\right] + \mathcal O(\varepsilon^4)%
\end{align}
Here $C_{OU}$ is a constant such that the correction integrates to $0$.
Similar expressions can be obtained for the other models; see~\cite{thesis_urbain}.
% CHANGE? can be found in \cref{sec:asymptotic_analysis},

Taking into account only the first nontrivial correction,
the order of which we denote by $\delta$,
the steady-state solutions to the McKean--Vlasov equation with colored noise can be approximated by solving the approximate self-consistency equation
\begin{align}
    m &= R_0(m, \beta, \theta) + \varepsilon^{\delta} \, R_{\delta}(m, \beta, \theta) \nonumber \\
      \label{eq:approximate_self_consistency} &:= \int_{\real} x \, \rho_{\infty}(x; m, \beta, \theta) \, \d x + \varepsilon^{\delta} \int_{\real} x \, p_{\delta}(x; m, \beta, \theta) \, \d x \\
      &\approx R(m,\beta) := \int_{\real} x \, \rho^{\varepsilon}_{\infty}(x; m, \beta, \theta) \, \d x. \nonumber
\end{align}
We show in \cref{fig:rm-vs-m} that the equation $R_0(m, \beta, \theta) + \varepsilon^2 R_2(m, \beta, \theta) = m$,
for fixed $\beta = 10$, $\theta = 1$ and $\varepsilon = 0.1$,
admits three solutions in the case of \textbf{OU} noise,
similarly to the case of white noise.
This figure was generated using the asymptotic expansion~\eqref{eq:p-asymptotic-OU}.
\begin{figure}
\centering
\includegraphics[width=0.5\linewidth]{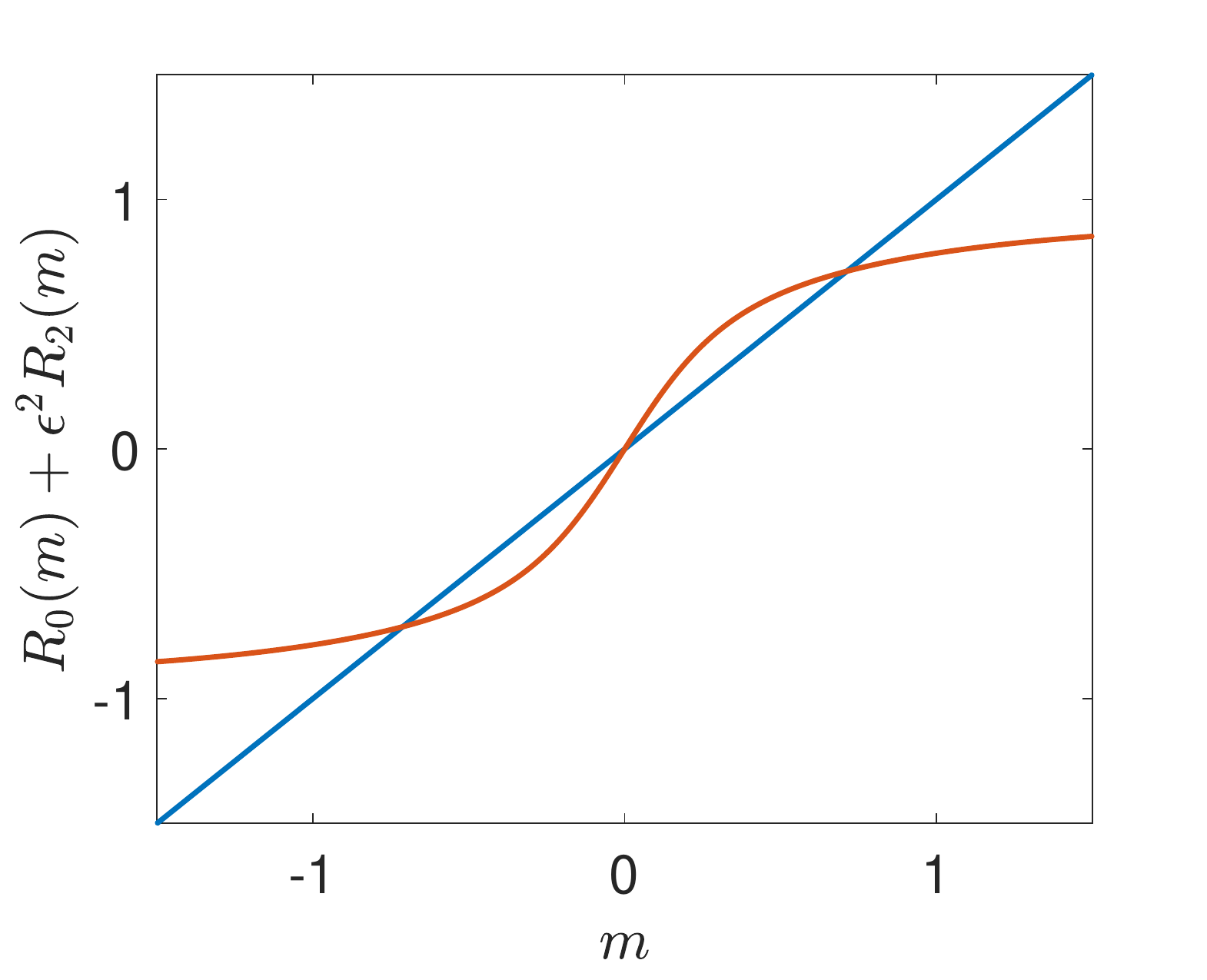}
\caption{%
    Truncated asymptotic expansion of the self-consistency map, $R_0 + \varepsilon^2 \, R_2$,
    as a function of $m$ (red line) compared to $y=m$ (blue line) for the scalar Ornstein--Uhlenbeck noise,
    with $\beta = 10, \, \theta = 1, \, \varepsilon = 0.1$.
}
\label{fig:rm-vs-m}
\end{figure}
% When the noise is confined by a nonlinear potential (cases \textbf{B} and \textbf{NS}),
% it is also possible to find an explicit expression of the first nontrivial term in the asymptotic expansion
% and to predict the convergence rate to the white noise solution,
% although some coefficients have to be calculated numerically in this case;
% CHANGE? see \cref{sec:asymptotic_analysis}.
% see~\cite{thesis_urbain}.

\section{The numerical method}\label{sec:numerics}
In this section,
we describe the spectral numerical method that we will use in order to solve the time-dependent McKean--Vlasov equation, \cref{eq:FP-general,eq:self-consistency},
as well as the steady-state equation, \cref{eq:SS-general,eq:self-consistency-stationary}.
Before looking at colored noise,
we consider the case of white noise,
for which our method can be tested against the results in~\cite{GomesPavliotis2017},
which were obtained using the finite volume scheme developed in~\cite{carrillo2015finite}.

\subsection{Linear Fokker--Planck equation with white noise}%
\label{sub:white_noise_case}

We start by presenting the methodology used in the absence of an interaction term,
in which case \cref{eq:FP-general} reduces to a linear Fokker--Planck equation:
\begin{equation}
    \label{eq:fokker_planck_white_noise}
    \derivative{1}[\rho]{t} = \derivative{1}{x} \left(\derivative*{1}[V]{x} \,\rho +  \beta^{-1} \, \derivative{1}[\rho]{x} \right) =: \mathcal L_x^* \rho, \quad \rho(x, t=0) = \rho_0(x).
\end{equation}
We assume that $V(\cdot)$ is a smooth confining potential and,
consequently, the unique invariant distribution is given by $\rho_s = \frac{1}{\mathcal Z} \e^{-\beta V}$, where $\mathcal Z$ is the normalization constant~\cite[Proposition 4.2]{pavliotis2011applied}.
The Fokker--Planck operator in \cref{eq:fokker_planck_white_noise} is unitarily equivalent to a Schrödinger operator; see~\cite{abdulle2017spectral} and~\cite[Section 4.9]{pavliotis2011applied}.
Defining $u = \rho/\sqrt{\rho_s}$, the function $u$ satisfies
\begin{equation}
\begin{aligned}
    \label{eq:schrodinger_equation}
    \derivative{1}[u]{t} = \sqrt{\rho^{-1}_s} \, \mathcal L_x^* \, \left(\sqrt{\mathstrut \rho_s} \, u\right) = \beta^{-1} \, \derivative{2}[u]{x^2} + \left(\frac{1}{2} \, V''(x) - \frac{\beta}{4} \, |V'(x)|^{2} \right) \, u =: \mathcal H_x u,
\end{aligned}
\end{equation}
with the initial condition $u(x,t=0) = \rho_0 / \sqrt{\rho_s} =: u_0$.
Several works made use of Hermite spectral methods to study equations of this type,
e.g.\ ~\cite{abdulle2017spectral,MR1933042,gagelman2012spectral}.
The Schrödinger operator on the right-hand side of \cref{eq:schrodinger_equation} is selfadjoint in $\lp{2}{\real}$ and it has nonpositive eigenvalues.
Under appropriate growth assumptions on the potential $V(x)$ as $x \to \infty$,
it can be shown that its eigenfunctions decrease more rapidly than any exponential function in the $\lp{2}{\real}$ sense,
in that they satisfy $\e^{\mu \abs{x}} \, \varphi(x) \in \lp{2}{\real}$ for all $\mu \in \real$;
see~\cite{gagelman2012spectral} and also~\cite{agmon2014lectures} for a detailed study.
Under appropriate decay assumptions at infinity on the initial condition,
we expect the solution to \cref{eq:schrodinger_equation} to also decrease rapidly as $\abs{x} \to \infty$.

We denote by $\poly(d)$ the space of polynomials of degree less than or equal to $d$,
and by $\ip{\cdot}{\cdot}$ the usual $\lp{2}{\real}$ inner product.
For a quadratic potential $V_q = \frac{1}{2} \left(\frac{x}{\sigma}\right)^2$,
with $\sigma$ a scaling parameter,
the Galerkin method we employ consists in finding $u_d(t) \in \e^{-V_q/2} \, \poly(d)$ such that
\begin{subequations}
\label{eq:galerkin_approximation_white_noise_both}%
\begin{alignat}{2}
    \label{eq:galerkin_approximation_white_noise}%
    \ip{\derivative{1}[u_d]{t}}{w_d} &= \ip{\mathcal H_x u_d}{w_d}[\hat d]
                                     && \qquad \forall w_d \in \e^{-V_q/2} \, \poly(d), \quad \forall t > 0, \\
    \label{eq:galerkin_approximation_white_noise_initial_condition}%
    \ip{u_d(0)}{w_d} &= \ip{u_0}{w_d}[\hat d]
                  && \qquad \forall w_d  \in \e^{-V_q/2} \, \poly(d).
\end{alignat}
\end{subequations}
Here the subscript $\hat d \geq d$ on the right-hand side of
\cref{eq:galerkin_approximation_white_noise,eq:galerkin_approximation_white_noise_initial_condition} indicates that
the inner product is performed using a numerical quadrature with $\hat d + 1$ points.
With appropriately rescaled Gauss--Hermite points,
inner products calculated using the quadrature are \emph{exact} for functions in $\e^{-V_q/2} \, \poly(\hat d)$,
\begin{equation*}
    \ip{v_d}{w_d}[\hat d] = \ip{v_d}{w_d} \quad  \forall v_d, w_d  \in \e^{-V_q/2} \, \poly(\hat d),
\end{equation*}
which is why we did not append the subscript $\hat{d}$ to the inner products in the left-hand side of \cref{eq:galerkin_approximation_white_noise,eq:galerkin_approximation_white_noise_initial_condition}.
When $V$ is a polynomial,
it is possible to show using the recursion
\iflong
relations~\cref{eq:hermite_polynomials_1d_recursion_derivative,eq:hermite_polynomials_1d_recursion_adjoint_derivative} in
\cref{sec:hermite_polynomials},
\else
relations~(A.1) and~(A.2) in
\cite[Appendix A]{2019arXiv190405973G}
\fi
that
the inner product $\ip{\mathcal H_x u_d}{w_d}[\hat d]$ on the right-hand side of \cref{eq:galerkin_approximation_white_noise} is \emph{exactly} $\ip{\mathcal H_x u_d}{w_d}$
when $\hat d \geq d + \deg(\abs*{\derivative*{1}[V]{x}}^2)$.
This is the approach we take in all the numerical experiments presented in this paper,
and we will therefore omit the subscript $\hat d$ in \cref{eq:galerkin_approximation_white_noise} from now on.

The natural basis of $\poly(d)$ (from which a basis of $e^{-V_q/2} \, \poly(d)$ follows)
to obtain a finite-dimensional system of differential equations from the variational formulation~\eqref{eq:galerkin_approximation_white_noise}
is composed of rescaled Hermite polynomials $H_i^{\sigma}(x) := H_i(x/\sigma)$,
$0 \leq i \leq d$,
where $H_i(x)$ are the Hermite polynomials orthonormal for the Gaussian weight $\mathcal N(0, 1)$;
the corresponding basis functions of $\e^{-V_q/2} \, \poly(d)$ are then rescaled Hermite functions.
The fundamental results on Hermite polynomials, Hermite functions and the related approximation results that are used in this paper
are summarized in
\iflong
\cref{sec:hermite_polynomials}.
\else
\cite[Appendix A]{2019arXiv190405973G}.
\fi
\iflong
In general,
in addition to the rescaling parametrized by $\sigma$,
a translation could be applied in order to generate more suitable basis functions (e.g.\ when most of the mass of $\e^{-\beta V}$ is itself localized away from $x = 0$),
but for simplicity we confine ourselves in this work to the case where $V_q$ is symmetric around $x = 0$.
\fi
\iflong
\begin{remark}
Although \cref{eq:schrodinger_equation} and the associated variational formulation~\eqref{eq:galerkin_approximation_white_noise}
are convenient for analysis purposes,
for numerical purposes it is useful to perform a second unitary transformation;
defining $v = \e^{V_q/2} \, u =: u/\sqrt{\rho_q}$, the function $v$ satisfies
\begin{equation}
\begin{aligned}
    \label{eq:bk_like_equation_white_noise}
    \derivative{1}[v]{t} =  \sqrt{\rho_s^{-1} \, \rho_q^{-1}} \, \mathcal L_x^* \, \left(\sqrt{\rho_s \, \rho_q} \, v \right)
                         = \frac{1}{\beta}\left( - V_q' \, \derivative{1}{x} + \derivative{2}{x^2}
                         +  \frac{\beta}{2} \, V'' - \frac{1}{2} \, V_q'' - \frac{1}{4} \, |\beta V'|^{2}  + \frac{1}{4} \, |V_q'|^{2} \right) v,
\end{aligned}
\end{equation}
with the initial condition $v(x,t=0) = \rho_0 / \sqrt{\rho_s \, \rho_q}$,
and to approximate the solution to this equation in $\poly(d)$.
While clearly equivalent,
this approach enables us to work directly with Hermite polynomials,
for which a range of free and open-source software tools are available,
e.g.\ in the \emph{NumPy} package for scientific computing in \emph{Python}~\cite{scipy}.
Building upon the tools provided by \emph{NumPy},
we have developed a thin \emph{Python} library, available online~\cite{hermipy},
that offers the possibility of fully automating spectral methods based on Hermite polynomials.
We note that, when $V_q = \beta V$,
\cref{eq:bk_like_equation_white_noise} is merely the backward Kolmogorov equation corresponding to~\cref{eq:fokker_planck_white_noise}.
\end{remark}
\fi

It is possible to prove the convergence of the method presented above in the limit as $d \to \infty$
given appropriate additional assumptions on the confining potential $V(\cdot)$.
For simplicity we will make the following assumption,
which is satisfied for the bistable potential that we consider in this work,
but we note that less restrictive conditions would be sufficient.
\begin{assumption}
\label{assumption:potential}
The confining potential $V(\cdot)$ is a polynomial of (even) degree greater than or equal to 2.
Consequently, it satisfies
\begin{equation*}
    \label{eq:spectral_gap_assumption}
    C_1 (1 + \abs{x}^2) \leq C_2 + W :=  C_2 + \left(\frac{\beta}{4} \abs{V'}^2 - \frac{1}{2} V''\right) \leq C_3 (1 + \abs{x}^{2k}),
\end{equation*}
for constants $C_1, C_2, C_3 > 0$ and a natural number $k \geq 1$.
\end{assumption}
\revision{%
    We will denote by $\sobolev{m}{\real}[\mathcal H_x]$ the Hilbert space obtained by completion of $C^{\infty}_c(\real)$,
    the space of smooth compactly supported functions,
    with the inner product
    \begin{equation*}
        \ip{u}{v}[m,\mathcal H_x] := \ip{(-\mathcal H_x + 1)^\m u}{v}.
    \end{equation*}
    The norm associated with this Sobolev-like space will be denoted by $\norm{\cdot}[m, \mathcal H_x]$.
}
\begin{theorem}
    \label{thm:convergence_as_d_goes_to_infinity}
    Suppose that \cref{assumption:potential} holds and that
    the initial condition $u_0$ is smooth and belongs to $\sobolev{m}{\real}[\mathcal H_x]$
    for some natural number $\m \geq 2k$,
    where $k$ is as in \cref{assumption:potential}.
    Then for any $d \geq \m - 1$,
    any final time $T$ and for all $\alpha > 0$,
    it holds that
    \begin{equation*}
        \label{eq:theorem_result}
        \sup_{t \in [0, T]} \norm{u(t) - u_d(t)}^2 \leq C_{\alpha} \,\e^{\alpha T} \, {\frac{(d-\m+1)!}{(d-2k+1)!}} \norm{u_0}[m,\mathcal H_x],
    \end{equation*}
    for a constant $C_{\alpha}$ not depending on $d$, $u_0$, or $T$,
    and where $\norm{\cdot}$ denotes the $\lp{2}{\real}$ norm.
\end{theorem}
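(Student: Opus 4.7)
The plan is a standard parabolic Galerkin error analysis in the weighted Sobolev space $\sobolev{m}{\real}[\mathcal H_x]$: derive an energy estimate for the Galerkin error exploiting that $\mathcal H_x$ is selfadjoint and nonpositive on $\lp{2}{\real}$, combine it with an approximation theorem for the rescaled Hermite basis, and propagate the regularity of $u_0$ to $u(t)$ via the semigroup generated by $\mathcal H_x$.

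To set this up, let $\Pi_d$ denote the $\lp{2}{\real}$-orthogonal projection onto $\e^{-V_q/2}\poly(d)$ and decompose $e(t) := u(t) - u_d(t) = \eta(t) + \xi_d(t)$, with $\eta(t) := u(t) - \Pi_d u(t)$ and $\xi_d(t) := \Pi_d u(t) - u_d(t) \in \e^{-V_q/2}\poly(d)$. Subtracting \cref{eq:galerkin_approximation_white_noise} from the continuous identity $\ip{\partial_t u}{w} = \ip{\mathcal H_x u}{w}$ and testing against $w_d = \xi_d$ gives
\begin{equation*}
    \tfrac{1}{2}\tfrac{\mathrm d}{\mathrm d t}\norm{\xi_d}^2 = \ip{\mathcal H_x \xi_d}{\xi_d} + \ip{\mathcal H_x \eta}{\xi_d} - \ip{\partial_t \eta}{\xi_d}.
\end{equation*}
The first term on the right is nonpositive by assumption on $\mathcal H_x$, and the last vanishes because $\partial_t \eta = (I-\Pi_d)\partial_t u$ is $\lp{2}{\real}$-orthogonal to $\xi_d$. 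I would then split the remaining cross term using selfadjointness as $\ip{\mathcal H_x \eta}{\xi_d} = -\ip{(-\mathcal H_x)^{1/2}\eta}{(-\mathcal H_x)^{1/2}\xi_d}$, apply Young's inequality with a small parameter $\alpha>0$, absorb the $\xi_d$ piece into the (nonpositive) first term, and close with Grönwall to obtain
\begin{equation*}
    \sup_{t\in[0,T]}\norm{\xi_d(t)}^2 \leq C_\alpha\, \e^{\alpha T}\Bigl(\norm{\xi_d(0)}^2 + \int_0^T \norm{\eta(s)}[1,\mathcal H_x]^2\,\mathrm d s\Bigr).
\end{equation*}

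The next step is an approximation theorem of the form
\begin{equation*}
    \norm{v - \Pi_d v}[s,\mathcal H_x]^2 \leq C\,\frac{(d-m+1)!}{(d-2k+1)!}\,\norm{v}[m,\mathcal H_x]^2 \qquad \forall v\in \sobolev{m}{\real}[\mathcal H_x],\ m \geq 2k,\ s\in\{0,1\},
\end{equation*}
which I would prove by induction on $m$. Under \cref{assumption:potential} the Schrödinger potential $W$ is a polynomial of degree at most $2k$, so the three-term recurrence for the rescaled Hermite polynomials implies that $\mathcal H_x$ is represented in the Hermite basis by a banded matrix of half-bandwidth $2k$; this lets one control the Hermite coefficients of $v$ at degrees above $d$ by those of $(-\mathcal H_x+1)v$ at degrees shifted by $2k$, gaining a factor $\sim (d-2k+1)^{-1}$ per inductive step, which cumulates after $m$ iterations into the stated factorial ratio. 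The regularity is transferred from $u_0$ to $u(t)$ by noting that $\mathcal H_x$ commutes with $(I-\mathcal H_x)^m$ and that $\e^{t\mathcal H_x}$ is $\lp{2}{\real}$-contractive, so $\norm{u(t)}[m,\mathcal H_x] \leq \norm{u_0}[m,\mathcal H_x]$ uniformly in $t\geq 0$. Substituting $v = u(t)$ into the approximation bound and inserting it into the Grönwall estimate—together with a matching bound on $\norm{\xi_d(0)}$ coming from \cref{eq:galerkin_approximation_white_noise_initial_condition}—closes the argument via the triangle inequality $\norm{u - u_d}^2 \leq 2\norm{\eta}^2 + 2\norm{\xi_d}^2$.

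The main obstacle is proving the approximation theorem with the precise factorial prefactor. A naive spectral bound based only on the harmonic-oscillator part of $\mathcal H_x$ already gives an asymptotically equivalent $d^{-(m-2k)}$ rate, but obtaining the exact factorial factor requires careful inductive bookkeeping using the Hermite three-term recurrence, verifying at each step that the multiplication operator by $W$ and the lower-order polynomial terms it produces shift Hermite coefficients by at most $2k$ indices without accumulating unfavourable constants. Handling the fractional power $(-\mathcal H_x)^{1/2}$ cleanly in the energy estimate—most easily via spectral calculus for the selfadjoint operator $-\mathcal H_x$, whose compact resolvent is guaranteed by the lower bound in \cref{assumption:potential}—is a secondary technical point.
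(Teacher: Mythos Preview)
Your overall strategy---split the error as $(u-\hat\Pi_d u)+(\hat\Pi_d u-u_d)$, derive an energy inequality using the nonpositivity and selfadjointness of $\mathcal H_x$, apply Young's inequality and Gr\"onwall, then close with a Hermite approximation bound and semigroup regularity---matches the paper's proof. The regularity propagation $\norm{u(t)}[m,\mathcal H_x]\le\norm{u_0}[m,\mathcal H_x]$ is obtained in the paper exactly as you suggest, via the energy identity $\partial_t\ip{(-\mathcal H_x)^i u}{u}=-2\ip{(-\mathcal H_x)^{i+1}u}{u}\le 0$.

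The main difference is in how the approximation estimate is obtained. The paper does \emph{not} argue by induction on $m$ using the bandedness of $\mathcal H_x$. Instead it invokes the classical Hermite-function approximation theorem for the operator $\hat\partial_x=\partial_x+x/2$, which already delivers the factorial prefactor in the form $\norm{\hat\partial_x^\ell(v-\hat\Pi_d v)}^2\le\frac{(d-m+1)!}{(d-\ell+1)!}\norm{\hat\partial_x^m v}^2$, and then proves two norm-equivalence lemmas: one showing that $\max_{m_1+m_2\le m}\norm{x^{m_1}\partial_x^{m_2}v}$ and $\max_{i\le m}\norm{\hat\partial_x^i v}$ are comparable, and another showing that the latter is bounded by $\ip{(-\mathcal H_x+1)^m v}{v}^{1/2}$ (this second lemma uses the polynomial structure of $W$ from \cref{assumption:potential}). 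The quantity the paper needs to control after Young's inequality is $\norm{\mathcal H_x(u-\hat\Pi_d u)}^2$; since $\mathcal H_x=\beta^{-1}\partial_x^2-W$ with $W$ of degree $2k$, this is bounded by $\sum_{i\le 2k}\norm{\hat\partial_x^i(u-\hat\Pi_d u)}^2$, and taking $\ell=2k$ in the Hermite bound yields exactly the $(d-2k+1)!$ denominator. Your proposed bandedness induction is plausible in spirit, but the step ``gaining a factor $\sim(d-2k+1)^{-1}$ per inductive step'' is the vaguest part of your argument: bandedness alone does not give this---you also need growth of the diagonal of $-\mathcal H_x$ and control of the off-diagonal entries through the recursion, and it is not obvious this produces the precise factorial ratio rather than just a power of $d$.

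A secondary difference: the paper avoids the fractional power $(-\mathcal H_x)^{1/2}$ entirely. It applies Young's inequality directly as $\ip{\mathcal H_x\eta}{e_d}\le\tfrac{\alpha}{2}\norm{e_d}^2+\tfrac{1}{2\alpha}\norm{\mathcal H_x\eta}^2$, which is why the exponential factor $e^{\alpha T}$ appears. Your absorption argument, if carried through, would actually give a bound linear in $T$ (hence stronger than stated), and would require only the $\norm{\cdot}[1,\mathcal H_x]$ norm of $\eta$---which in turn would lead to $(d-k+1)!$ rather than $(d-2k+1)!$ in the denominator, again sharper than the theorem claims.
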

\begin{proof}
    See \cref{sec:proof_convergence_theorem}.
\end{proof}
\begin{remark}
\revision{
    \label{remark:not_optimal}
    \Cref{thm:convergence_as_d_goes_to_infinity} is not optimal.
    One one hand, it overestimates the error for large times:
    both the numerical and exact solutions converge to stationary solutions as $t \to \infty$,
    so we expect the error $\norm{u(t) - u_d(t)}^2$ to tend to finite limit when $t \to \infty$.
    Although the error between the stationary solutions can be bounded similarly to the transient error,
    see~\cref{remark:stationary_error},
    we have not obtained a result that combines both errors;
    we plan to return to this interesting question in future work.
    On the other hand, the bound on the transient error of~\cref{thm:convergence_as_d_goes_to_infinity} is probably not sharp.
    Indeed, when the initial condition $u_0$ is smooth and,
    together with all its derivatives,
    decreases exponentially as $x \to \infty$,
    \cref{thm:convergence_as_d_goes_to_infinity} implies only that
    the error decreases faster than any negative power of $d$.
    In most practical examples, however,
    we observed numerically that the convergence is in fact exponential.
}
\end{remark}
\revision{
\begin{remark}
    \label{remark:initial_condition}
    The condition that $u_0 \in \sobolev{m}{\real}[m, \mathcal H_x]$ is quite restrictive.
    It requires in particular that $u_0 \in \lp{2}{\real}$,
    which is equivalent to requiring that $\rho_0 \in \lp{2}{\real}[\rho_s^{-1}]$,
    % \begin{equation}
    %     \label{eq:particular_condition_initial}
    %     \int_{\real} |\rho_0(x)|^2 \rho_s(x)^{-1} \, \d x  < \infty,
    % \end{equation}
    because $u_0 = \rho_0 / \sqrt{\rho_s}$ by definition.
    Though natural from an $L^2$-theory perspective, see~\cite[Sec. 4.5]{pavliotis2011applied} and~\cite{MR1812873},
    this condition excludes a large class of initial conditions.
    If $V(x)$ behaves as $|x|^4$ as $|x| \to \infty$,
    then it excludes Gaussian initial conditions, for example.
\end{remark}
}

\iflong
\begin{remark}
An alternative manner of solving \cref{eq:fokker_planck_white_noise} numerically
is to approximate the eigenvalues and eigenfunctions of the operator $\mathcal H_x$
in terms of Hermite functions,
after which integration in time becomes trivial.
This approach requires finding (or approximating) the $d + 1$ solutions $(\varphi_d^i, \lambda_d^i)$,
$\varphi_d^i \in \e^{-V_q/2} \, \poly(d)$, $0 \leq i \leq d$,
of the eigenvalue problem
\begin{equation}
    \label{eq:eigen_approximation_white_noise}
    \ip{\mathcal H_x \, \varphi_d^i}{w_d} = \lambda_d^i \ip{\varphi_d^i}{w_d} \quad \forall w_d \in \e^{-V_q/2} \, \poly(d).
\end{equation}
Once these have been calculated,
an approximation of the solution to~\cref{eq:schrodinger_equation} is obtained simply as
\begin{equation}
    u_d(t) = \sum_{i=0}^{d} c_{0i} \, \varphi_d^i \, \e^{\lambda^i_d \, t}, \quad c_{0i} = \ip{u_0}{\varphi_d^i}_{\hat d}.
\end{equation}
It is readily seen that $u_d$,
defined by this equation,
is also the unique solution of~\eqref{eq:galerkin_approximation_white_noise}.
This is the approach taken in~\cite{goudonefficient}
for the calculation of drift and diffusion coefficients
in the diffusion approximation of kinetic equations.
While equivalent,
this methodology is more computationally expensive
because it requires the full solution of the eigenvalue problem~\eqref{eq:eigen_approximation_white_noise}.
\end{remark}
\fi

\subsection{McKean--Vlasov equation with white noise}%
In the presence of an interaction term,
the Fokker--Planck equation becomes nonlinear:
\begin{equation}
    \label{eq:fokker_planck_white_noise_nonlinear}
    \derivative{1}[\rho]{t} = \derivative{1}{x} \left(\derivative*{1}[V]{x} \,\rho + \theta (x - m(t)) \, \rho +  \beta^{-1} \derivative{1}[\rho]{x} \right) =: (\mathcal L_x^m)^* \rho , \quad m(t) = \int_{\real} x \, \rho \, \d x.
\end{equation}
For this equation the weighted $\lp{2}{\real}[\e^{V}]$ energy estimate of the linear case~\eqref{eq:proof:energy_estimate_linear} does not hold,
and there is therefore no longer a natural space for the Galerkin approximation.
\revision{
    Because of this,
    and since we would like to employ the spectral numerical method with Gaussian initial conditions,
    which is not possible with a variational formulation of the type~\eqref{eq:galerkin_approximation_white_noise_both} in view of~\cref{remark:initial_condition},
    we will use Hermite functions to approximate the solution to \cref{eq:fokker_planck_white_noise_nonlinear} directly,
    i.e.\ we will look for an approximate solution in the space $\e^{-V_q/2} \, \poly(d)$.
}
The variational formulation corresponding to the Galerkin approximation is then
to find $\rho \in \e^{-V_q/2} \, \poly(d)$ such that
\begin{subequations}
\label{eq:galerkin_approximation_white_noise_nonlinear}
\begin{alignat}{2}
    \ip{\derivative{1}[\rho_d]{t}}{w_d} &= \ip{(\mathcal L_x^{m_d})^* \rho_d}{w_d} \qquad  & \forall w_d \in \e^{-V_q/2} \, \poly(d), \\
    \label{eq:galerkin_approximation_self_consistency}
    \quad m_d &= \frac{\ip{x}{\rho_d}[\hat d]}{\ip{1}{\rho_d}[\hat d]} \approx \frac{\int_{\real} x \, \rho_d \, \d x}{\int_{\real} \rho_d \, \d x}, \\
    \label{eq:galerkin_approximation_white_noise_nonlinear_initial_condition}
    \ip{\rho_d(0)}{w_d} &= \ip{\rho_0}{w_d}[\hat d] \qquad & \forall w_d  \in \e^{-V_q/2} \, \poly(d).
\end{alignat}
\end{subequations}
Dividing by $\ip{1}{\rho_d}[\hat d]$ in \cref{eq:galerkin_approximation_self_consistency} is useful to account for changes in the total mass of $\rho_d$,
which can compromise the accuracy of the method when $d$ is low,
but doing so becomes unnecessary for large enough $d$.
In contrast with the operator $\mathcal H_x$ in \cref{eq:galerkin_approximation_white_noise},
the operator $(\mathcal L_x^{m_d})^*$ is not selfadjoint in $\lp{2}{\real}$,
and therefore the associated stiffness matrix is not symmetric.
In addition, the quadratic form $\ip{(\mathcal L_x^m)^* \cdot}{\cdot}$ is not necessarily negative for the usual $\lp{2}{\real}$ inner product,
and indeed we observe numerically that the eigenvalue with smallest real part of the discrete operator is often negative,
although small when $d$ is large enough.
\iflong
This is illustrated in~\cref{fig:eigen_values_of_smallest_real_part}
for the same parameters as in the subsequent convergence study.
\begin{figure}[htpb]
    \begin{minipage}[b]{.47\linewidth}
        \centering%
        \includegraphics[width=0.99\linewidth]{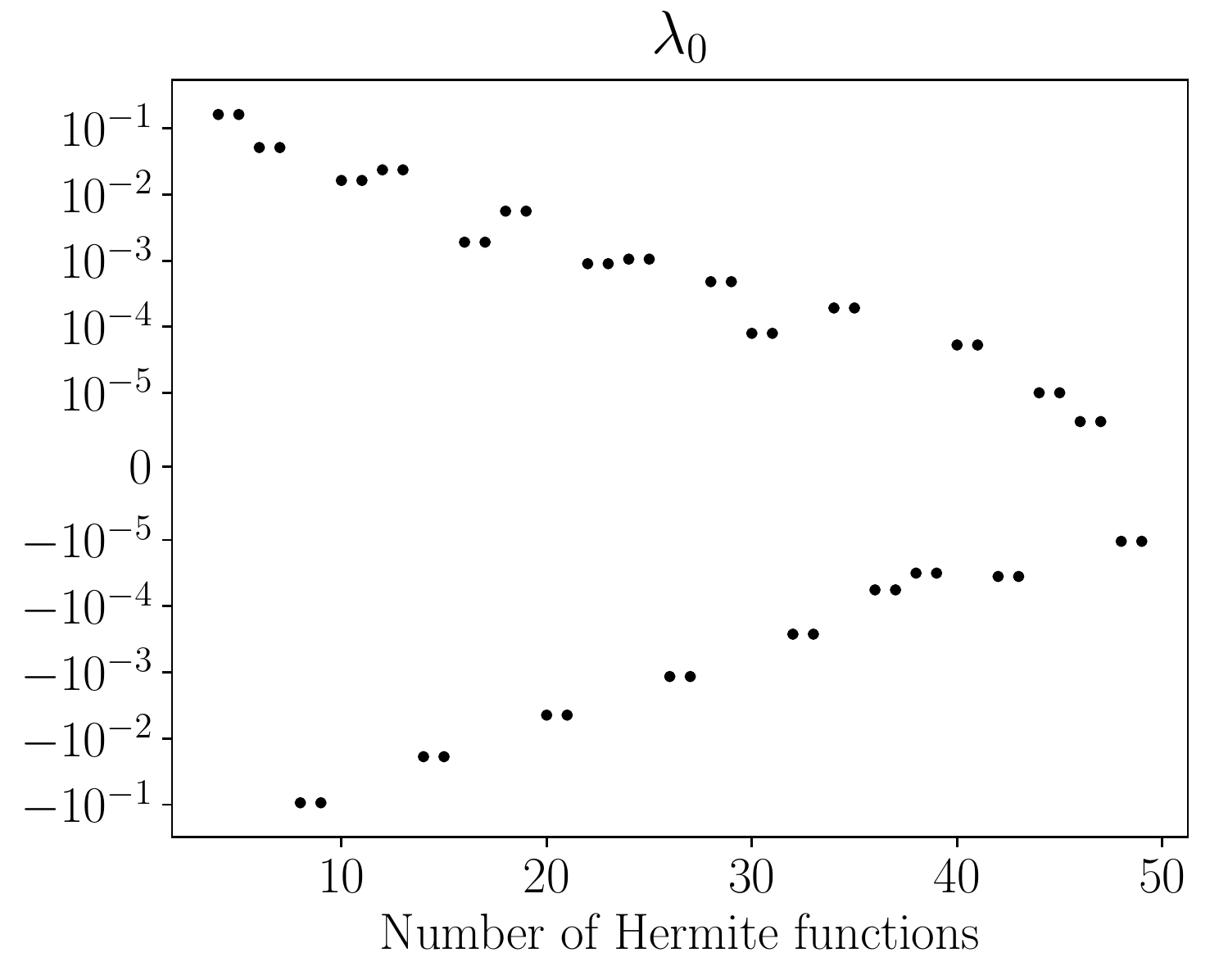}
        \subcaption{%
            \scriptsize Eigenvalue with smallest real part of $-\hat \Pi_d \, (\mathcal L_x^m)^*\, \hat \Pi_d$ for $m = 0$.
            A mixed scale, linear in the interval $[- 10^{-5}, 10^{-5}]$ and logarithmic elsewhere, is used for the $y$ axis.
        }%
        \label{fig:eigen_values_of_smallest_real_part}
    \end{minipage}\hspace{.05\linewidth}%
    \begin{minipage}[b]{.47\linewidth}
        \centering%
        \includegraphics[width=0.95\linewidth]{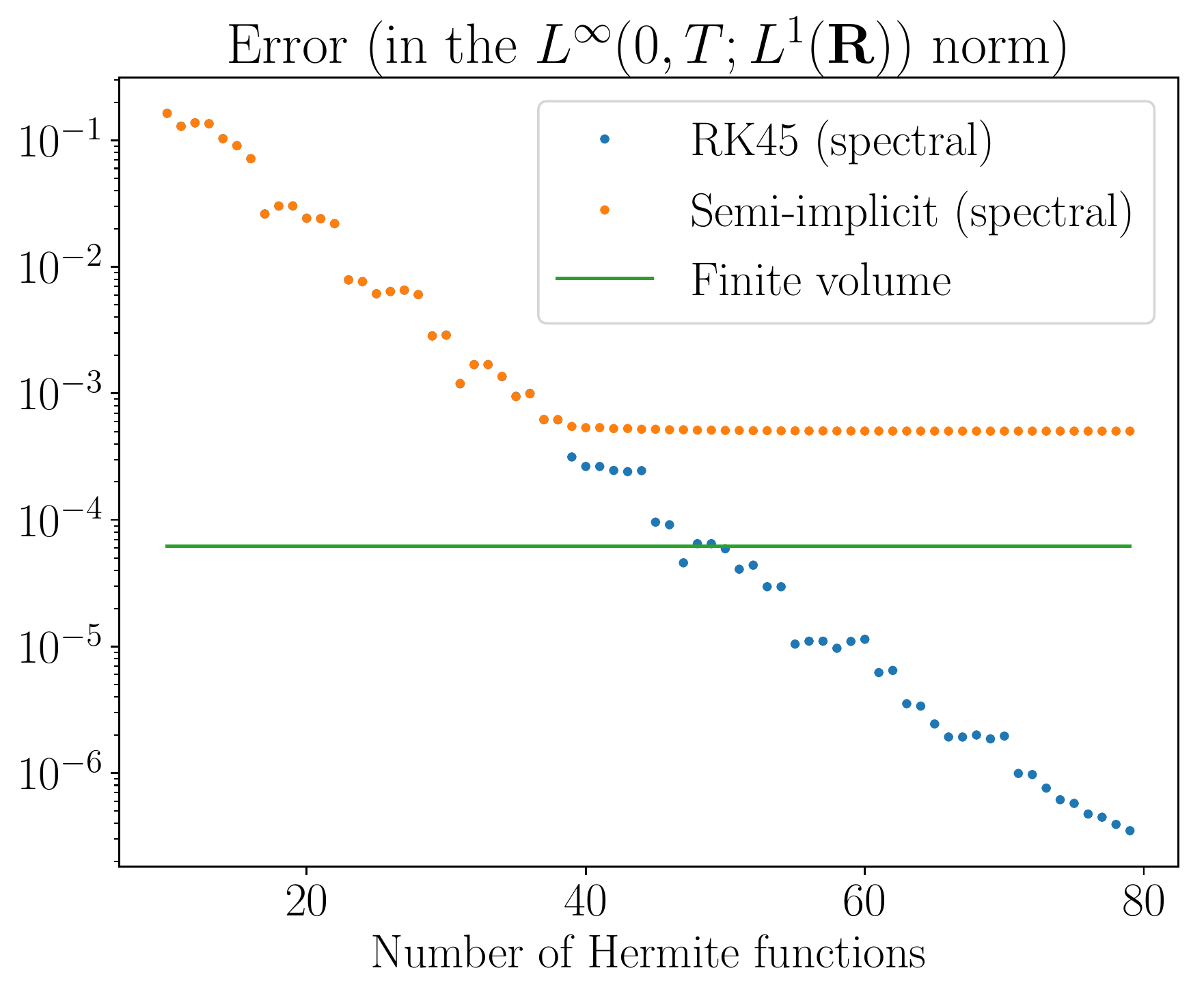}
        \subcaption{%
            \scriptsize Error of the Hermite Galerkin discretization,
            with either RK45 or the semi-implicit method~\eqref{eq:galerkin_approximation_white_noise_nonlinear_time_stepping},
            against degree of approximation ($d$),
            and comparison with the error of the finite volume scheme from~\cite{carrillo2015finite}.
        }%
        \label{fig:error_convergence_to_carrillo}
    \end{minipage}%
    \caption{%
        Study of the Galerkin approximation~\eqref{eq:galerkin_approximation_white_noise_nonlinear}.
    }
\end{figure}
\fi

For the integration in time,
we used either the RK45 method (using the \verb?solve_ivp? method from the \emph{SciPy integrate} module),
or a linear semi-implicit method obtained by
treating $m_d$ explicitly and the other terms implicitly at each time step.
The former is most useful when an accurate time-dependent solution is required,
while the latter enables the use of larger time steps
and is therefore more convenient when only the steady-state solution is sought,
as will be the case for the construction of bifurcation diagrams.
Denoting the time step by $\Delta t$ and the Galerkin approximation of $\rho_d(n \, \Delta t)$ by $\rho_d^n$,
the semi-implicit method is based on obtaining $\rho_d^{n+1}$ by solving:
\begin{subequations}
\label{eq:galerkin_approximation_white_noise_nonlinear_time_stepping}
\begin{align}
    \ip{\rho_d^{n+1} - \rho_d^n}{w_d} &= \Delta t \, \ip{(\mathcal L_x^{m_d^n})^* \rho_d^{n+1}}{w_d} \qquad \forall w_d \in \e^{-V_q/2} \, \poly(d), \\
    \quad m_d^{n+1} &= \frac{\ip{x}{\rho_d^{n+1}}[\hat d]}{\ip{1}{\rho_d^{n+1}}[\hat d]}.
\end{align}
\end{subequations}
\iflong
\paragraph{Convergence study}%
\label{par:convergence_study}
The analysis of the Hermite spectral method for general types of McKean--Vlasov equations will not be presented here.
For the purposes of this work,
it will be sufficient to present a detailed numerical study of the convergence of the method.
To study empirically the validity
of the Galerkin method~\eqref{eq:galerkin_approximation_white_noise_nonlinear}
and of the associated time-stepping scheme~\eqref{eq:galerkin_approximation_white_noise_nonlinear_time_stepping},
we compare our method with the positivity preserving, entropy decreasing finite volume method proposed in~\cite{carrillo2015finite}
for nonlinear, nonlocal gradient PDEs\footnote{%
    We reiterate the fact that one of the main advantages of our numerical method is that it \emph{does not} require that the PDE has a gradient flow structure.
}.
The parameters used here are $\beta = 3$, $\theta = 1$,
and the initial condition was the Gaussian $\mathcal N(10^{-1}, 1)$.
The same time points were used for the finite volume method and semi-implicit Galerkin method
(with a mean time step of approximately 0.002),
and for RK45 the absolute and relative tolerances were both set to $10^{-11}$.
For the finite volume method,
600 equidistant mesh points were used between $x = -6$ and $x = 6$.

\Cref{fig:error_convergence_to_carrillo}
presents the $\lp{\infty}{0,T;\lp{1}{\real}}$ norm of the errors associated with the solutions obtained,
for values of $d$,
the degree of Hermite polynomials used,
ranging from $10$ to $80$.
A very accurate solution,
obtained by using our spectral method with $d = 120$,
was employed for the calculation of the errors.
We observe that, as $d$ increases initially,
the solutions obtained using the semi-implicit~\eqref{eq:galerkin_approximation_white_noise_nonlinear_time_stepping} and the RK45 methods
are indistinguishable and converge exponentially fast.
From $d \approx 40$,
the accuracy of the semi-implicit method no longer improves,
indicating that the error introduced by the time-stepping scheme dominates from that point on.
From $d \approx 50$,
the Galerkin/RK45 approximation becomes more precise than the finite volume method.
We therefore conclude that an accuracy as good as
that obtained using the finite volume scheme
can be reached with roughly \emph{ten times fewer unknowns}
using the spectral discretization~\eqref{eq:galerkin_approximation_white_noise_nonlinear}.
Our spectral method also enjoys a low computational cost:
it ran in only about a minute with a Intel i7-3770 processor,
even for a value of $d$ as high as 80,
whereas the finite volume simulation took over an hour.

\Cref{fig:comparison_carrillo_solutions_in_time}
presents snapshots of the solutions at different times.
We observe that,
although the number of Hermite functions employed in the expansion is relatively low (=25),
the solutions are in extremely good agreement.
\begin{figure}[ht]
    \centering
    \includegraphics[width=0.32\linewidth]{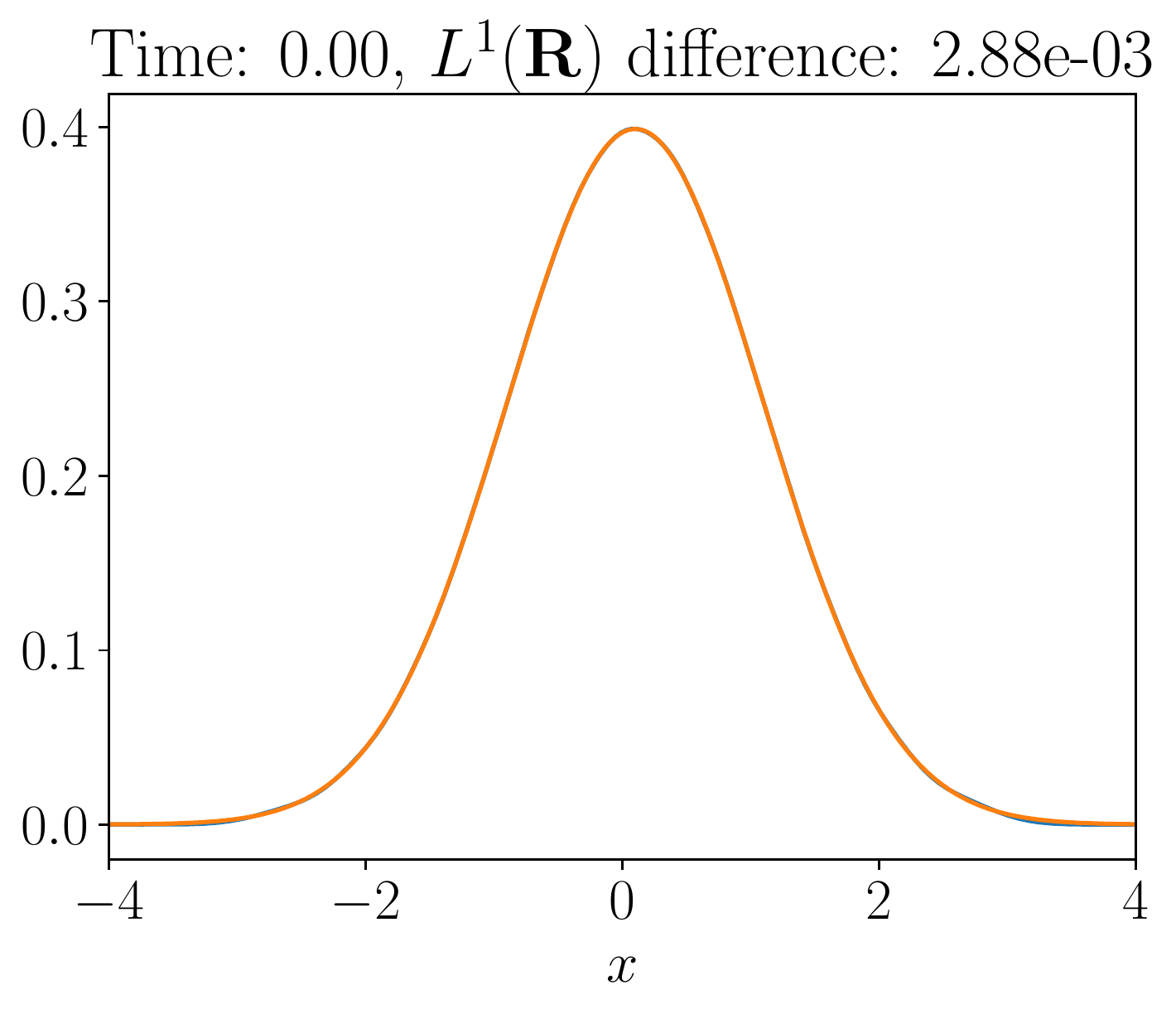}
    \includegraphics[width=0.32\linewidth]{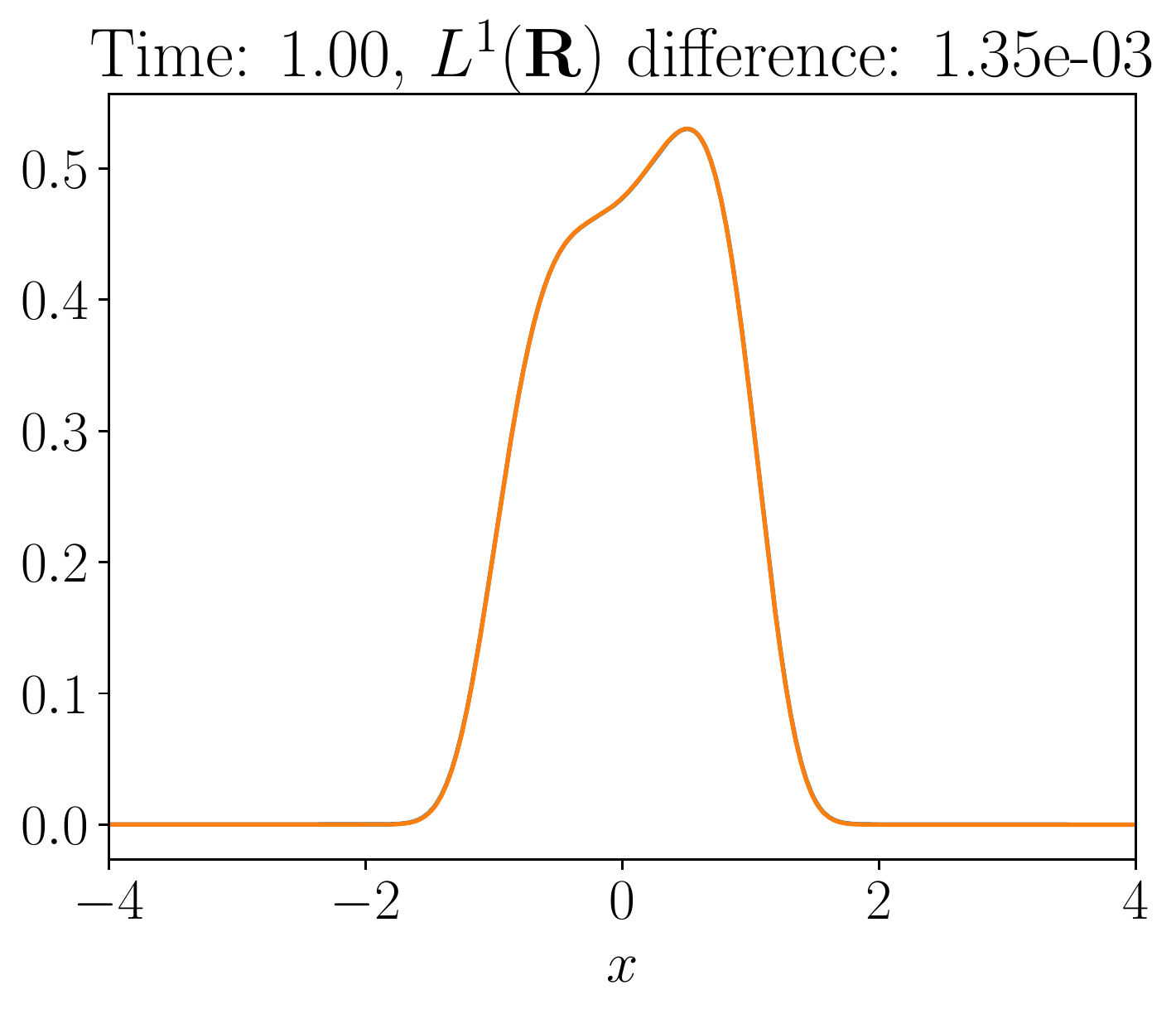}
    \includegraphics[width=0.32\linewidth]{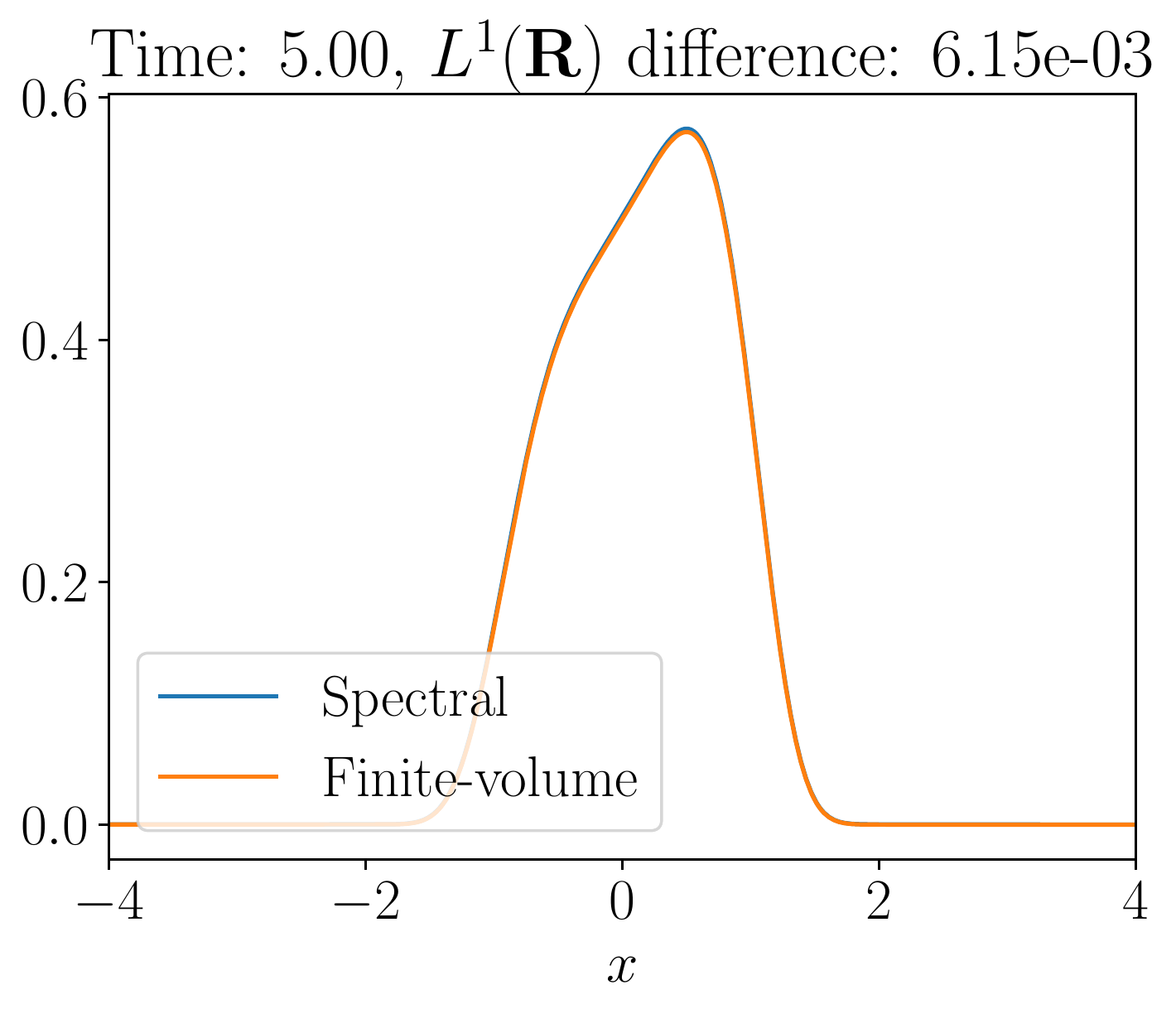}
    \caption{%
        Snapshots of the solution to~\eqref{eq:fokker_planck_white_noise_nonlinear} using either the finite volume method from~\cite{carrillo2015finite}
        or the Galerkin approximation~\eqref{eq:galerkin_approximation_white_noise_nonlinear} with the RK45 method and 25 Hermite functions.
    }
    \label{fig:comparison_carrillo_solutions_in_time}
\end{figure}

In the simulations presented in this section,
the scaling factor was set to $\sigma^2 = \frac{1}{10}$.
As discussed in
\iflong
\cref{sec:hermite_polynomials},
\else
\cite[Appendix A]{2019arXiv190405973G}
\fi
choosing this factor appropriately can significantly improve the accuracy of the method.
In particular,
given that the solution to~\cref{eq:fokker_planck_white_noise_nonlinear} decreases rapidly as $\abs{x} \to \infty$,
$\sigma$ should decrease with $d$,
with the optimal scaling being $\sigma \propto \sqrt{d}$,
as demonstrated in~\cite{tang1993hermite}.
For convergence studies, however,
it is convenient to use a fixed $\sigma$,
first because
this is assumed by most convergence results (such as \cref{thm:convergence_as_d_goes_to_infinity})
and, second,
because this simplifies the calculation of the matrices involved in the Galerkin formulation
(only the last row and the last column have to be calculated upon incrementing $d$).
\fi

\begin{remark}
    [Computational considerations]
Discretizing the operators appearing in the Galerkin approximations~\eqref{eq:galerkin_approximation_white_noise} and~\eqref{eq:galerkin_approximation_white_noise_nonlinear}
requires the calculation of multiple matrices corresponding to operators of the type $\Pi_d \, \left(f \, \partial_x \right) \Pi_d$,
where $f$ is a polynomial and $\Pi_d$ is the $\lp{2}{\real}[\e^{-V_q}]$ projection operator onto $\poly(d)$.
These calculations can be carried out by noticing that
\begin{equation*}
    \Pi_d \, \left(f \, \derivative*{m}{x^m} \right) \Pi_d = \left(\Pi_d \, f \, \Pi_d \right) \,  \left( \Pi_d \derivative*{m}{x^m} \Pi_d\right).
\end{equation*}
The matrix representation of the first operator on the right-hand side,
in a basis of Hermite polynomials,
can be obtained from the Hermite transform of $f$.
The matrix representation of the second operator,
on the other hand,
is a matrix with zero entries everywhere except on the $m$-th superdiagonal,
in view of the recursion
\iflong
relation~\eqref{eq:hermite_polynomials_1d_recursion_derivative}.
\else
relation~(A.1) in~\cite{2019arXiv190405973G}.
\fi
\end{remark}
\subsection{Linear Fokker--Planck equation with colored noise}%
\label{sub:fokker_planck_equation_with_colored_noise_case}

In this section,
we turn our attention to the case of Gaussian or non-Gaussian colored noise given in terms of overdamped Langevin dynamics.
The case of harmonic noise can be treated in a similar fashion,
and for conciseness we do not present the associated Galerkin formulation explicitly here.
We start by considering the linear (without the interaction term)
Fokker--Planck equation with colored noise:
\begin{equation}
    \label{eq:fokker_planck_colored_noise}
    \derivative{1}[\rho]{t} = \derivative{1}{x} \left( \derivative{1}[V]{x}\,\rho - \frac{\zeta}{\varepsilon} \, \sqrt{2 \, \beta^{-1}} \eta \, \rho \right)
    + \frac{1}{\varepsilon^{2}}\,\derivative{1}{\eta} \, \left( V_{\eta}'  \, \rho  + \derivative{1}[\rho]{\eta} \right) =: \mathcal L_{\varepsilon}^* \, \rho.
\end{equation}
We recall that $\varepsilon^2$ controls the correlation time of the colored noise and
$\zeta$ is a parameter such that the white noise limit is recovered (with inverse temperature $\beta$) when $\varepsilon \to 0$.
We include $\varepsilon$ in \cref{eq:fokker_planck_colored_noise} because,
although we do not consider the white noise limit in this section,
large values of $\varepsilon$ are in general more difficult to tackle numerically,
and it will be therefore convenient to use smaller correlation times in the numerical experiments below.
The problem is now two-dimensional and
the operator on the right-hand side of \cref{eq:fokker_planck_colored_noise} is no longer elliptic.
In contrast with the white noise case,
there does not exist an explicit formula for the steady-state solution for \cref{eq:fokker_planck_colored_noise}.

The procedure for obtaining a Galerkin formulation is the same as in \cref{sub:white_noise_case},
except that we now use tensorized Hermite polynomials/functions.
To retain some generality,
we will consider that the Galerkin approximation space is of the form $S_d = \e^{- U(x, \eta)/2} \, \e^{-V_q(x, \eta)/2} \, \poly(\mathcal I_d)$
for some function $U: \real^2 \mapsto \real$, a nondegenerate quadratic potential $V_q$ to be determined,
and where $\poly(\mathcal I_d) := \Span \left\{ x^{\alpha_x} \, \eta^{\alpha_{\eta}}: (\alpha_x, \alpha_{\eta}) \in \mathcal I_d \right\}$
for some index set $\mathcal I_d \subset \nat^2$ that grows with $d \in \nat$.
Compared to the one-dimensional case,
there are now two scaling parameters, $V_q := x^2/2\sigma_x^2 + \eta^2/2\sigma_\eta^2$.
The Galerkin approximation we propose consists in finding $\rho_d \in S_d$ such that
\begin{align}
    \label{eq:galerkin_approximation_colored_noise}
    \ip{\derivative{1}[\rho_d]{t}}{w_d}[\e^{U}] =& \ip{\mathcal L_{\varepsilon}^* \, \rho_d}{w_d}[\e^{U}] \qquad \forall w_d \in S_d, \quad \forall t > 0,
\end{align}
with appropriate initial conditions.
The choice of the weight $\e^U$ in the inner products of~\cref{eq:galerkin_approximation_colored_noise} is motivated by the fact that
differential operators admit sparse representations in the Hermite-type basis naturally associated with $S_d$,
and we note that $\e^{- U(x, \eta)/2} \, \e^{-V_q(x, \eta)/2} \, \poly(\nat^2)$,
where $\poly(\nat^2)$ is the space of polynomials in two dimensions,
is dense in $\lp{2}{\real^2}[\e^U]$.
In practice, we obtain $\rho_d$ as $\e^{- U(x, \eta)/2} \, \e^{-V_q(x, \eta)/2} \, v_d$,
where $v_d$ is obtained by solving
\begin{align}
    \label{eq:galerkin_approximation_colored_noise_after_mapping}
    \ip{\derivative{1}[v_d]{t}}{w_d}[\e^{-V_q}] =& \ip{\mathcal H_{\varepsilon} \, v_d}{w_d}[\e^{-V_q}] \qquad \forall w_d \in \poly(\mathcal I_d), \quad \forall t > 0,
\end{align}
where, \revision{for a test function $\varphi$,
$\mathcal H_{\varepsilon} \varphi := (\e^{U/2} \, \e^{V_q/2})\mathcal L_{\varepsilon}^* \, (\e^{- U/2} \, \e^{-V_q/2} \, \varphi)$},
and the basis functions used for \cref{eq:galerkin_approximation_colored_noise_after_mapping} are
Hermite polynomials orthonormal with respect to the Gaussian weight $\e^{-V_q}$.
Regarding the index set,
several choices are possible,
with the simplest ones being the triangle $\{\alpha \in \nat^2: \abs{\alpha}_1 \leq d \}$
and the square $\{\alpha \in \nat^2: \abs{\alpha}_{\infty} \leq d \}$,
see \cref{fig:solution_density_gaussian_case,fig:solution_density_bistable_case} below.
\iflong
We demonstrate in \cref{ssub:asymptotic_analysis_for_the_galerkin_formulation} that,
\else
It was demonstrated in~\cite{thesis_urbain} that,
\fi
in order to study the limit $\varepsilon \to 0$,
a rectangle-shaped index set is usually the only suitable choice.
When studying the behavior as $d$ increases,
however, we observed spectral convergence irrespectively of the index set utilized.

Clearly, it is necessary that $\rho \in \lp{2}{\real^2}[\e^U]$ for the Galerkin discretization~\eqref{eq:galerkin_approximation_colored_noise} to produce good results.
Since the $1/\varepsilon^2$ part of the operator on the right-hand side of \cref{eq:fokker_planck_colored_noise},
\revision{$\mathcal L_0^* \dummy = \derivative{1}{\eta} (V_{\eta}'(\eta) \dummy + \, \derivative{1}{\eta} \dummy)$},
is selfadjoint in $\lp{2}{\real}[\e^{-U_x(x)/2 - V_{\eta}(\eta)/2}]$
for any choice of $U_x$,
it is natural to choose $\e^{-U(x, \eta)/2} = \e^{-U_x(x)/2 - V_{\eta}(\eta)/2}$
for some one-dimensional potential $U_x$.
This guarantees that the matrix representation of $\mathcal L_0^*$ is symmetric and negative semi-definite,
but this is not a requirement.

\revision{%
    The performance of the Galerkin approximation~\eqref{eq:galerkin_approximation_colored_noise} is investigated through numerical experiments in \cref{sec:numerical_tests}.
    An asymptotic analysis of the numerical method in the limit as $\varepsilon \to 0$ is presented in~\cite{2019arXiv190405973G},
    which is a longer version of this paper.
}
\subsection{McKean--Vlasov equation with colored noise}%
We consider now the nonlinear McKean--Vlasov initial value problem with OU noise:
recalling that $\zeta = 1/\sqrt{2}$ in this case,
\begin{subequations}
\label{eq:mckean_vlasov_colored_noise}
\begin{align}
    &\derivative{1}[\rho]{t} = \derivative{1}{x} \left( \derivative{1}[V]{x} \, \rho + \theta \, (x - m(t)) \, \rho - \frac{1}{\varepsilon} \, \sqrt{\beta^{-1}} \, \eta \, \rho \right)
    + \frac{1}{\varepsilon^{2}}\,\derivative{1}{\eta} \, \left( \eta  \, \rho  + \derivative{1}[\rho]{\eta} \right), \\
    &m(t) = \int_{\real} \int_{\real} x \, \rho(x, \eta, t) \, \d \eta \, \d x, \\
    &\rho(x, \eta, t=0) = \rho_0 (x, \eta),
\end{align}
\end{subequations}
for some initial distribution $\rho_0(x, \eta)$ such that the noise is not necessarily started at stationarity.
The method that we use in this case,
which applies \emph{mutatis mutandis} to the other noise models,
is the same as in~\cref{eq:galerkin_approximation_colored_noise},
with the addition of the interaction term,
and we use the same time-stepping schemes as in~\cref{sub:white_noise_case}.

\revision{%
    Numerical experiments, testing the convergence of the method for the two time-stepping schemes,
    are presented in \cref{sec:numerical_tests}.
}

\subsection{Monte Carlo simulations}
We will compare the bifurcation diagrams obtained using the spectral method described above to those obtained by direct MC simulations of the system of interacting particles~\eqref{eq:main-sde}.
We use the Euler--Maruyama method:
\[
X_{k+1}^i = X_{k}^i - V'(X_k^i) \, \Delta t - \theta\left(X_k^i - \frac{1}{N}\sum_{j=1}^NX_k^j\right) \, \Delta t + \frac{\zeta}{\varepsilon} \, \sqrt{2\beta^{-1}} \, \eta_k^i \, \Delta t,
\]
where $\eta_k^i$ is the appropriate projection of the stochastic process $\vect{Y}_t$.
In the case of Gaussian noise, this is discretized as follows
\[
\vect{Y}_{k+1}^i =  \vect{Y}_{k}^i + \frac{1}{\varepsilon^2} A \, \vect{Y}_k^i \, \Delta t + \frac{1}{\varepsilon} \, \sqrt{2 \ \Delta t} \, D \xi,
\]
where $\xi\sim N(0,1)$, and $X_k$, $\vect{Y}_k$ and $\eta_k$ are the approximations to $X(k\Delta t), \, \vect{Y}(k\Delta t)$ and $\eta(k\Delta t)$, respectively.
The time step used was always $\mathcal O(\varepsilon^2)$,
to ensure the accurate solution of the equation.
This scheme has \revision{weak} order of convergence one, see~\cite{MR1872387,MR1949404},
and we find that we capture the correct behavior as long as the time step is sufficiently small.

\iflong
\section{Asymptotic analysis for the Galerkin formulation}%
\label{ssub:asymptotic_analysis_for_the_galerkin_formulation}
In \cref{sec:results},
we will construct bifurcation diagrams of $m$ as a function of $\beta$ for different values of $\varepsilon$,
and we will verify that the bifurcation diagram of the white noise case is recovered when $\varepsilon \to 0$.
Since the spectral method presented in \cref{sec:numerics} will be used to that purpose,
it is useful to study the behavior of the solution to the Galerkin formulation~\eqref{eq:galerkin_approximation_colored_noise}
in the limit $\varepsilon \to 0$,
which is the purpose of this section.
We will then confirm numerically the rates of convergence to the white noise limit presented in \cref{sub:the_white_noise_limit},
i.e.\ $\mathcal O(\varepsilon^2)$ for Ornstein--Uhlenbeck noise
and $\mathcal O(\varepsilon^4)$ for harmonic noise.

For simplicity,
we confine ourselves for the analysis to the case where the noise process is one-dimensional
and the weight function $\e^{-U(x, \eta)/2}$ can be decomposed as $\e^{-U(x, \eta)/2} = \e^{-U_x(x)/2} \, \e^{-V_{\eta}(\eta)/2}$.
As before,
$\hat \Pi_d$ denotes the $\lp{2}{\real^2}[\e^U]$ projection operator on the space of Hermite functions (with appropriate scalings).
Decomposing the operator $(\hat \Pi_d \, \mathcal L_{\varepsilon}^* \, \hat \Pi_d)$ in \cref{eq:galerkin_approximation_colored_noise}
in powers of $\varepsilon$,
we obtain the equation
\begin{subequations}
\label{eq:finite_dimensional_problem}
\begin{align*}
    \derivative{1}[\rho_d]{t} =  (\hat \Pi_d \, \mathcal L_{\varepsilon}^* \, \hat \Pi_d) \, \rho_d
                              &= \hat \Pi_d \, \left( \frac{1}{\varepsilon^2}  \mathcal L_0^* + \frac{1}{\varepsilon}  \mathcal L_1^* +  \mathcal L_2^* \right) \hat \Pi_d \, \rho_d, \\
                              &=: \left( \frac{1}{\varepsilon^2} \hat {\mathcal L}_0 + \frac{1}{\varepsilon} \hat {\mathcal L}_1 + \hat {\mathcal L}_2 \right) \rho_d.
\end{align*}
\end{subequations}
As a consequence of the choice of $\e^{U}$,
the largest (sign included) eigenvalue of $\hat {\mathcal L}_0$ is a nonpositive, nondecreasing function of $d$.
Since we cannot expect the leading order term of the discrete generator to have an eigenvalue exactly equal to 0,
we look for a solution of the form $\rho_d =  \e^{- \abs{\lambda_{0,d}} \, t / \varepsilon^2} (\varrho_0 + \varepsilon \, \varrho_1 + \varepsilon^2 \, \varrho_2 + \dotsb)$,
where $\lambda_{0,d}$ is the largest eigenvalue of $\hat {\mathcal L}_0$.
Denoting by $\id$ the identity operator and
gathering equal powers of $\varepsilon$,
we obtain the equations:
\begin{subequations}
\begin{align}
    \label{eq:galerkin_asymptotics:first_equation}
    & 0 = \left( \hat {\mathcal L_0} + \abs{\lambda_{0,d}} \, \id \right) \, \varrho_0, \\
    \label{eq:galerkin_asymptotics:second_equation}
    & 0 = \left( \hat {\mathcal L_0} + \abs{\lambda_{0,d}} \, \id \right) \, \varrho_1 + \hat {\mathcal L_1} \, \varrho_0, \\
    \label{eq:galerkin_asymptotics:third_equation}
    & \derivative{1}[\varrho_i]{t} = \left( \hat {\mathcal L_0} + \abs{\lambda_{0,d}} \, \id \right) \, \varrho_{i+2} + \hat {\mathcal L_1} \, \varrho_{i+1} + \hat {\mathcal L_2} \, \varrho_{i},
    \qquad i = 0, 1, \dotsc
\end{align}
\end{subequations}

\subsection{Suitable index sets}%
\label{sub:suitable_index_sets}
Let $H^x_i$ and $H^{\eta}_j$ denote the
(possibly rescaled) Hermite functions in the $x$ and $\eta$ directions, respectively.
Let also $\mathcal I_{i,\eta}$ be a slice of the index set, $\mathcal I_{i,\eta} := \{j: (i,j) \in \mathcal I_d\}$,
and $\Pi_x \mathcal I_d$ be the projected index set given by $\{i: (\exists j \in \nat) [(i, j) \in \mathcal I_d] \}$.
Expanding $\varrho_0$ in terms of the basis functions used for the Galerkin discretization in the first equation,
we obtain
\begin{align*}
    \label{eq:symmetric_non_gaussian_noise_centering_condition_first_equation}
    0 &= (\hat {\mathcal L}_0 + \abs{\lambda_{0,d}} \id) \, \sum_{(i,j) \in \mathcal I_d} c_{ij} \, \left( \e^{-U_x/2} \, H^{x}_{i} \otimes e^{-V_{\eta}/2} \, H^{\eta}_{j}  \right) \\
      &=  \sum_{i \in \Pi_x \mathcal I_d} \e^{-U_x/2} \, H^{x}_{i} \otimes
      \left(\sum_{j \in \mathcal I_{i,\eta}} c_{ij} \sum_{k \in \mathcal I_{i,\eta}} (L_{jk} + \abs{\lambda_{0,d}} \, \delta_{jk}) \, \e^{-V_{\eta}/2} \, H^{\eta}_{k} \right),
\end{align*}
where $L_{jk} := \ip{\mathcal L_0^* (\e^{-V_{\eta}/2} \,  H^{\eta}_j)}{\e^{-V_{\eta}/2} \,  H^{\eta}_k}[\e^{V_{\eta}}]$.
Since $\{\e^{-U_x/2} \, H^{x}_{i}\}_{i \in \Pi_x \mathcal I_d}$ are linearly independent,
this implies that
\begin{equation}
    \label{eq:symmetric_non_gaussian_noise_constraint_index_set}
    \sum_{j \in \mathcal I_{i,\eta}} c_{ij} \, (L_{jk} + \abs{\lambda_{0,d}} \, \delta_{jk}) = 0, \qquad \forall i \in \Pi_x \mathcal I_d,
\end{equation}
implying that,
for all $\forall i \in \Pi_x \mathcal I_d$,
the vector of coefficients ${(c_{ij})}_{j \in \mathcal I_{i,\eta}}$ is
in the kernel of the matrix ${(L_{jk} + \abs{\lambda_{0,d}})}_{j,k \in \mathcal I_{i,\eta}}$.
Therefore, if for some $i$ this matrix has full rank,
then the corresponding Hermite coefficients must be $0$.
This is of particular relevance when
the eigenfunction in the kernel of $\mathcal L_0^*$ cannot be exactly represented in terms of a finite number of the approximating basis functions,
as is the case with the noise processes {\bf B} and {\bf NS} considered in \cref{sec:model}.

\revision{
Consider for example the case of  a triangular index set.
In this case the matrix $M_i := {(L_{jk} + \abs{\lambda_{0,d}})}_{j,k \in \mathcal I_{i,\eta}}$ is of shape $d + 1-i \times d + 1-i$:
incrementing $i$ by one corresponds to removing the last line last and last column of the matrix.
Consequently, the maximal eigenvalue of $M_i$ (sign included),
which we denote by $\lambda_{\max}(i)$, is a nonincreasing function of $i$
such that $\lambda_{\max}(0) = 0$.
In the case of noise model {\bf B},
it holds that $\lambda_{\max}(i) < 0$ for $i > 0$ or $i > 1$,
depending on whether $d$ is even or odd.
Consequently, in this case~\cref{eq:symmetric_non_gaussian_noise_constraint_index_set} implies that $c_{ij} = 0$ for these values of $i$ and all $j \in \mathcal I_{i,\eta}$.
}

From this we conclude that,
in order to capture the correct solution as $\varepsilon \to 0$,
it is necessary to choose a rectangularly shaped index set,
which is consistent with the fact that,
in the limit $\varepsilon \to 0$,
the solution can be expressed as a tensor product $\rho_{\infty}(x, \eta) = \rho^x(x) \, \rho^{\eta}(\eta)$.

To illustrate the point made in the previous paragraph,
we present side by side in \cref{fig:comparison_index_sets} the results of numerical experiments performed
using either a triangular index set or a square index set,
for the parameters $\varepsilon = 0.01$, $d = 20$, $\beta = 15$, $\theta = 0$, $\sigma_x^2 = \sigma_{\eta}^2 = 1/15$, $\e^{-U_x(x)} = 1$.
While the probability density obtained using a square index set is close to the exact solution and clearly exhibits four local maxima,
the solution obtained using a triangle index set is concentrated around $x = 0$,
and all the associated Hermite coefficients $c_{ij}$ with $i > 0$ are very close to zero.

\begin{figure}[ht]%
    \centering {%
        \begin{minipage}[b]{.8\linewidth}
            \centering%
            \label{fig:symmetric_non_gaussian_index_set_square}
            \includegraphics[width=.47\linewidth]{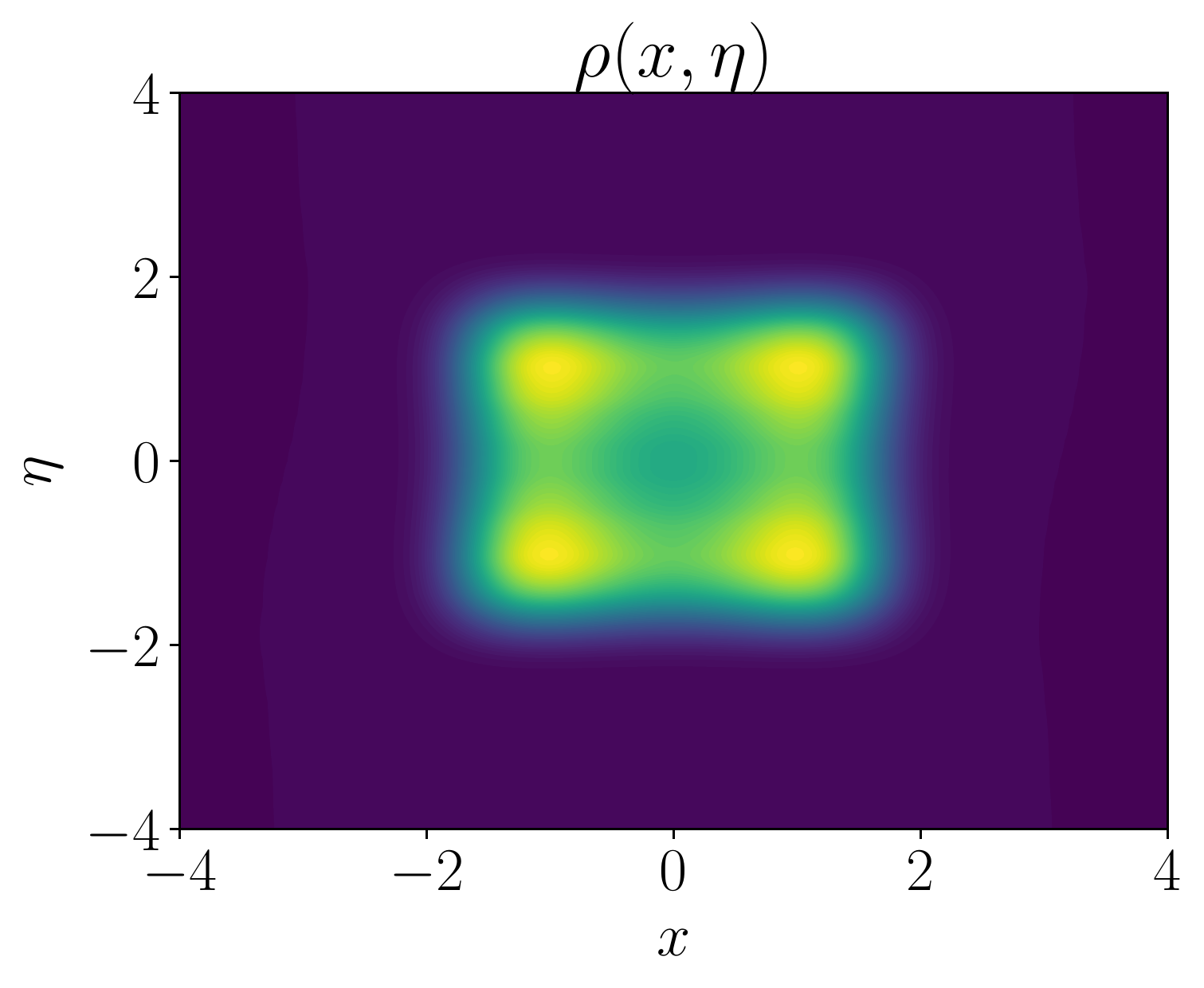}
            \includegraphics[width=.47\linewidth]{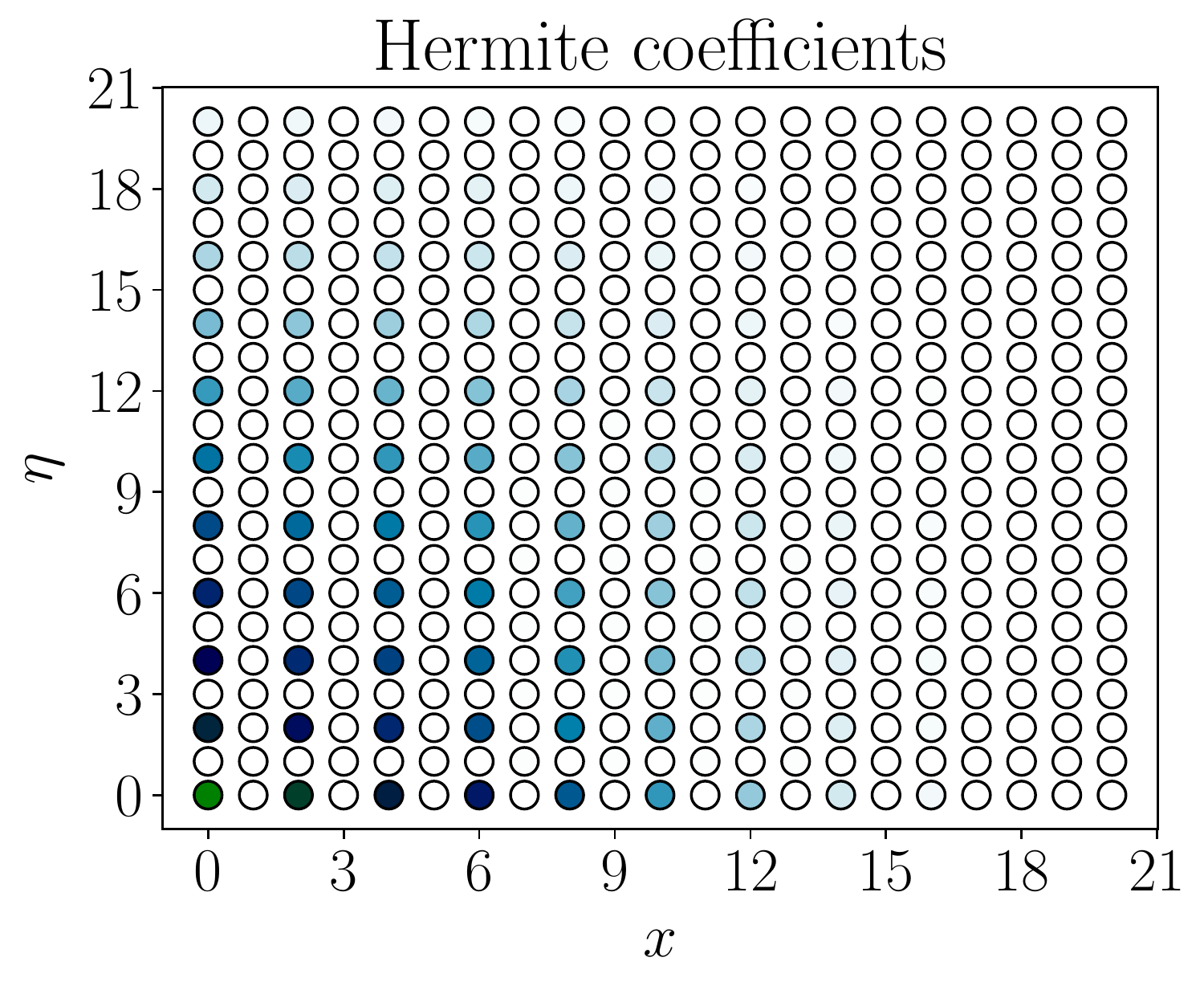}
            \subcaption{%
                Square index set.
            }
        \end{minipage}%

        \begin{minipage}[b]{.8\linewidth}
            \centering%
            \label{fig:symmetric_non_gaussian_index_set_triangle}
            \includegraphics[width=.47\linewidth]{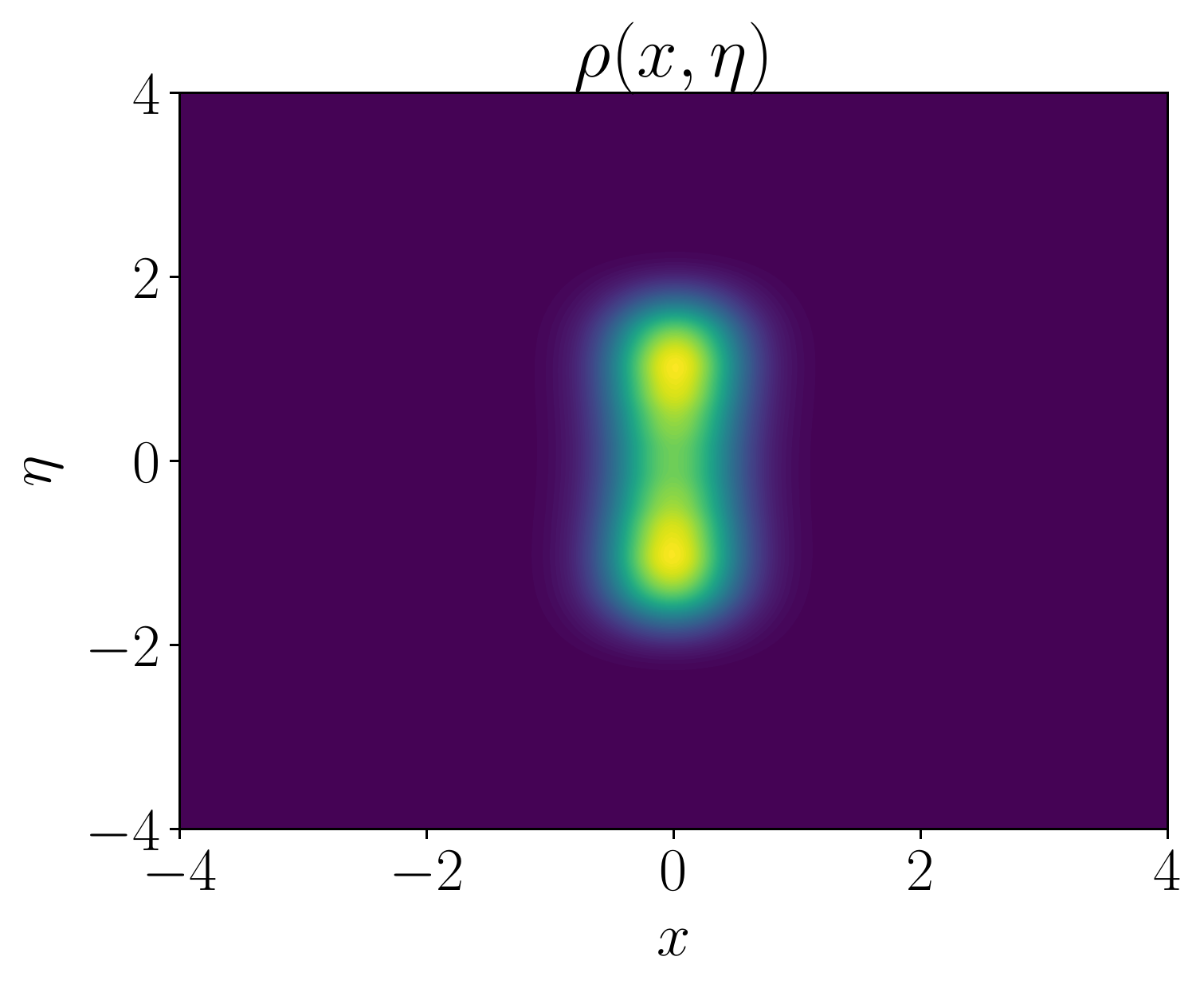}
            \includegraphics[width=.47\linewidth]{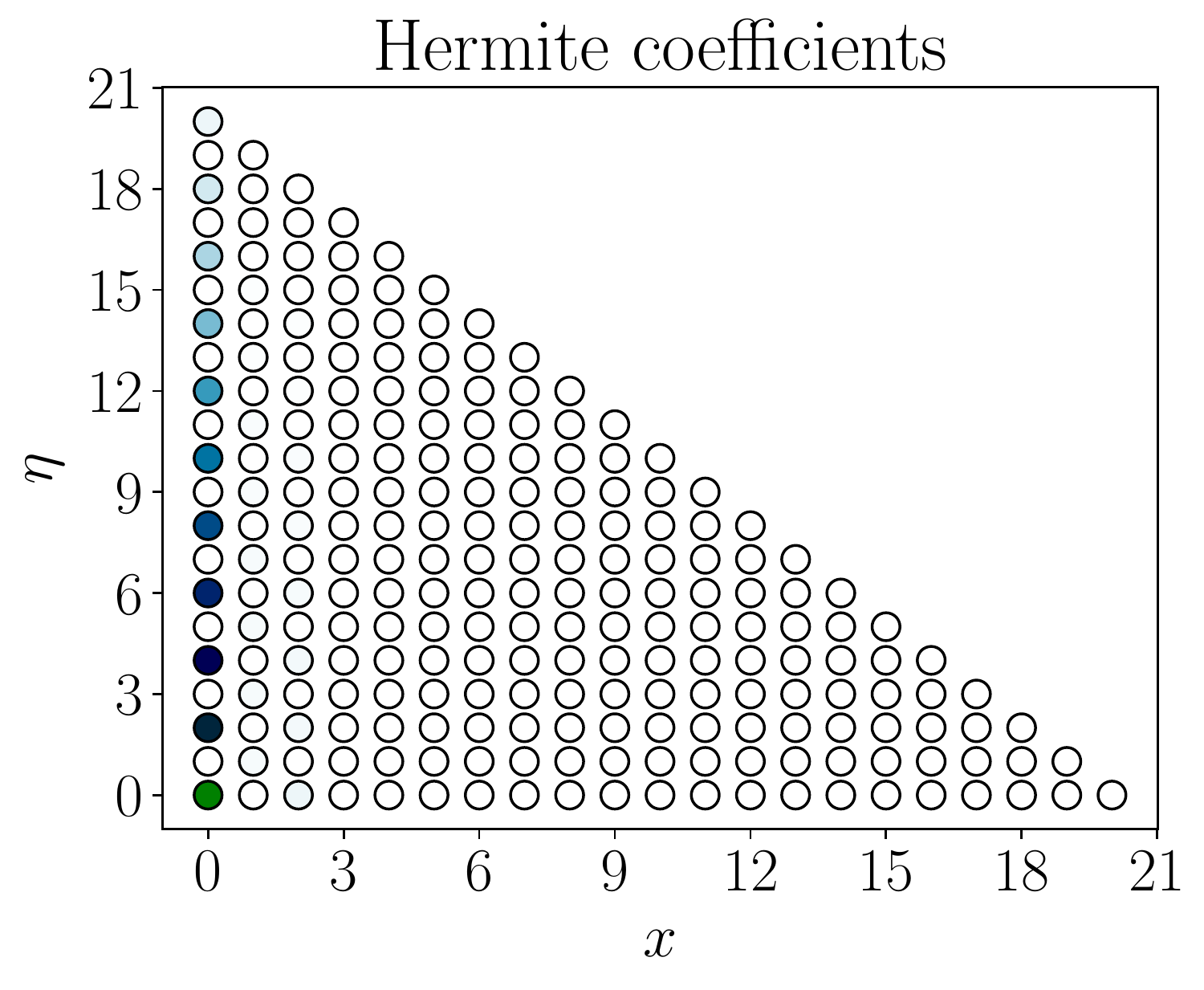}
            \subcaption{%
                Triangle index set.
            }
        \end{minipage}%
    }
    \caption{%
        Comparison of the numerical solutions for the steady-state Fokker--Planck equation with the bistable noise model (model {\bf B} in \cref{sec:model}),
        using either a square index set or a triangle index set.
        While not obvious from the figures, in the former case
        it follows from the fact that $\varepsilon = 0.01 \ll 1$ that the columns of Hermite coefficients in the $\eta$ direction are essentially colinear.
    }%
    \label{fig:comparison_index_sets}
\end{figure}

\subsection{Effective drift and diffusion coefficients}%
\label{sub:parasitic_drift}
We assume from now on that the index-set has a rectangular shape, $\mathcal I_d = \{0, 1, \dotsc, d_x \} \times \{0, 1, \dotsc, d_{\eta} \}$.
At the continuous, infinite dimensional level,
the absence of an effective drift term
for the models of the noise {\bf B} and {\bf NS}
as $\varepsilon \to 0$ is ensured by the centering condition~\eqref{eq:centering_condition_potential}.
Any deviation from zero would lead to an effective drift term,
scaling as $1/\varepsilon$ and proportional to
\begin{equation*}
    % \label{eq:effective_noise_due_to_bias}
    \sqrt{2 \, \beta^{-1}} \, \zeta \, \int_{\real} \eta \, \exp \left( -\beta  V_{\eta}(\eta) \right) \, \d \eta.
\end{equation*}
At the finite-dimensional, numerical level,
a parasitic effective drift can arise even when $V_{\eta}$ satisfies~\eqref{eq:centering_condition_potential},
as we demonstrate below.
This can occur when $V_{\eta}$ is not an even function,
and it is especially critical when
the number of basis functions used to approximate the solution in the $\eta$ direction is relatively low,
leading to a nonzero first moment of the approximate equilibrium probability density of the noise process.
In these cases,
it is useful to introduce an artificial drift term $\mu_d/\varepsilon$ in Galerkin formulation,
for some constant $\mu_d$ to be determined.
To formulate \cref{result:convergence_epsilon_galerkin_formulation} below,
let $\varphi_{0,d}$ denote the (assumed unique up to a sign) normalized one-dimensional eigenfunction associated with $\lambda_{0,d}$,
\begin{equation*}
    \label{eq:definition_phi_0}
    \varphi_{0,d} = \underset{\revision{\ip{\e^{V_{\eta}/2} \, \varphi_d}{ \e^{-\eta^2/4 \sigma_{\eta}^2}} > 0}}{\argmax_{\varphi_d \in S_{d_{\eta}}^{\eta}, \, \norm{\varphi_d}_{\e^{V_{\eta}} = 1}}}  \ip{\mathcal L_0^* \varphi_d}{\varphi_d}[\e^{V_{\eta}}], \qquad \text{where } S_{d_{\eta}}^{\eta} = \e^{- V_{\eta}/2} \, \e^{-\eta^2/4 \sigma_{\eta}^2} \, \poly(d_{\eta}).
\end{equation*}

\begin{result}
    \label{result:convergence_epsilon_galerkin_formulation}
    Let $\mu_d$ be defined by
    \begin{equation}
        \label{eq:corrective_drift_term}
        \mu_d = - \sqrt{2 \, \beta^{-1}} \, \zeta \, \int_{\real} \eta \, \varphi_{0,d}^2 \, \e^{V(\eta)} \, \d \eta.
    \end{equation}
    Then, when $\varepsilon \ll 1$, the solution $\rho_{d}$ of
    \begin{align*}
        % \label{eq:galerkin_approximation_colored_noise_with_modified_drift}
        \derivative{1}[\rho_d]{t} = \hat \Pi_d \, \left( \mathcal L_{\varepsilon}^* + \abs{\lambda_{0,d}} \, \id - \left(\frac{\mu_d}{\varepsilon}\right) \derivative{1}{x}\right) \, \hat \Pi_d \,  \rho_d, \qquad \rho_d(x, \eta, t=0) = \rho_{d,0}(x) \, \varphi_{0,d}(\eta)
    \end{align*}
    with $\mathcal L_{\varepsilon}^*$ as in \cref{eq:fokker_planck_colored_noise},
    can be approximated by $\rho_{d}^x(x, t) \, \varphi_{0,d}(\eta)$,
    where $\rho_{d}^x$ satisfies
    \begin{align}
        \label{eq:effective_fokker_planck_equation}
        \derivative{1}[\rho_d^x]{t} = \hat \Pi_d \left[ \derivative{1}{x} \left( \derivative*{1}[V]{x} \, \rho_d^x + A_d \, \derivative{1}[\rho_d^x]{x} \right) \right], \qquad \rho_d^x(x, t=0) = \rho_{d,0}(x).
    \end{align}
    Here the effective diffusion $A_d$ is equal to
    \begin{equation}
        \label{eq:effective_diffusion_Galerkin}
        A_d := \int_{\real} {(-\hat \Pi_d \, \mathcal L_0 \, \hat \Pi_d + \abs{\lambda_{0,d}} \, \id)}^{-1} \left(b_{\eta}  \, \varphi_{0,d} \right) \, \left(b_{\eta} \, \varphi_{0,d} \right)\, \e^{V_{\eta}(\eta)} \, \d \eta,
    \end{equation}
    with $b_{\eta} := \left(\mu_d + \sqrt{2  \, \beta^{-1}} \, \zeta \, \eta\right)$ and where $\mathcal L_0$ is the formal $L^2$ adjoint of $\mathcal L_0^*$.
\end{result}
\begin{proof}
The argument below is formal,
but it can be turned into a rigorous proof using standard methods in multiscale analysis;
see e.g.\ \cite{pavliotis2011applied}.
Expanding the solution in powers of $\varepsilon$ and gathering terms multiplying equal powers of $\varepsilon$,
a system of equations similar to \cref{eq:galerkin_asymptotics:first_equation,eq:galerkin_asymptotics:second_equation,eq:galerkin_asymptotics:third_equation} can be obtained,
differing only by the presence of the corrective drift term next to $\hat {\mathcal L}_1$.
The solvability condition for the first equation implies that $\varrho_0 = \rho^x_d(x, t) \, \varphi_{0,d}(\eta)$.
For the second equation,
we see from the definition of $\mu_d$ and
using the symmetry of $\hat {\mathcal L}_0$ in $\lp{2}{\real^2}[\e^{V_{\eta}(\eta) + U_x(x)}]$,
that the Fredholm solvability condition is automatically satisfied,
which enables solving for $\varrho_1$:
\begin{align*}
    \label{eq:galerkin_asymptotics:rho_1}
    \varrho_1 = {(-\hat \Pi_d \, \mathcal L_0^* \, \hat \Pi_d + \abs{\lambda_{0,d}} \, \id)}^{-1} \left( -(\mu_d + \sqrt{2 \, \beta^{-1}} \, \zeta \, \eta) \, \varphi_{0,d} \right) \, \derivative{1}[\rho^x_d]{x} + \Phi_1(x, t) \, \varphi_{0,d}.
\end{align*}
Writing out the solvability condition for the third equation,
we obtain the effective equation for $\rho^x_d$:
\begin{equation*}
    \label{eq:galerkin_asymptotics:centering_condition_third_equation}
    \derivative{1}[\rho^x_d]{t} = \hat {\mathcal L_2} \, \rho^x_d(x, t) \, + \int_{\real} \left(\hat {\mathcal L_1} - \mu_d \, (\hat \Pi_d \,{\textstyle \derivative{1}{x}} \hat \Pi_d)\right) \, \varrho_{1} \, \varphi_{0,d} \, \e^{V_{\eta}} \, \d \eta,
\end{equation*}
which after expansion of the terms is \cref{eq:effective_fokker_planck_equation}.
\end{proof}
% \begin{remark}
%     We see in \cref{eq:effective_diffusion_Galerkin} that the effective diffusion coefficient of the finite-dimensional system is usually not exactly equal to 1.
%     When constructing the bifurcation diagrams for the case of harmonic noise in \cref{sec:results},
%     we will correct for this error by rescaling $\beta$ accordingly.
% \end{remark}
In the examples we considered in~\cref{sec:numerics},
both $\varphi_{0,d}$ and $V_{\eta}$ were even functions,
so the corrective drift term defined by \cref{eq:corrective_drift_term} was zero.
This is not the case for the noise model {\bf NS},
where the noise is confined by the uneven potential~\eqref{eq:potential_for_non_symmetric_case}.
To illustrate the importance of including the corrective drift term~\eqref{eq:corrective_drift_term},
we present simulation results with and without it in the case of the noise model {\bf NS}.
We consider the following parameters for the equation: $\varepsilon = 2^{-5}$, $\theta = 0$, $\beta = 1$, $V(x) = x^4/4 - x^2/2$,
and for the numerics we choose $d=20$, $\sigma_x^2 = \sigma_{\eta}^2 = 0.1$, $\e^{-U_x(x)} = 1$.
The solutions obtained are presented in~\cref{fig:nonsymmetric_potential_with_corrective,fig:nonsymmetric_potential_without_corrective}.
\begin{figure}[!ht]
    \centering {%
        \begin{minipage}[b]{.49\linewidth}
            \centering%
            \includegraphics[width=.7\linewidth]{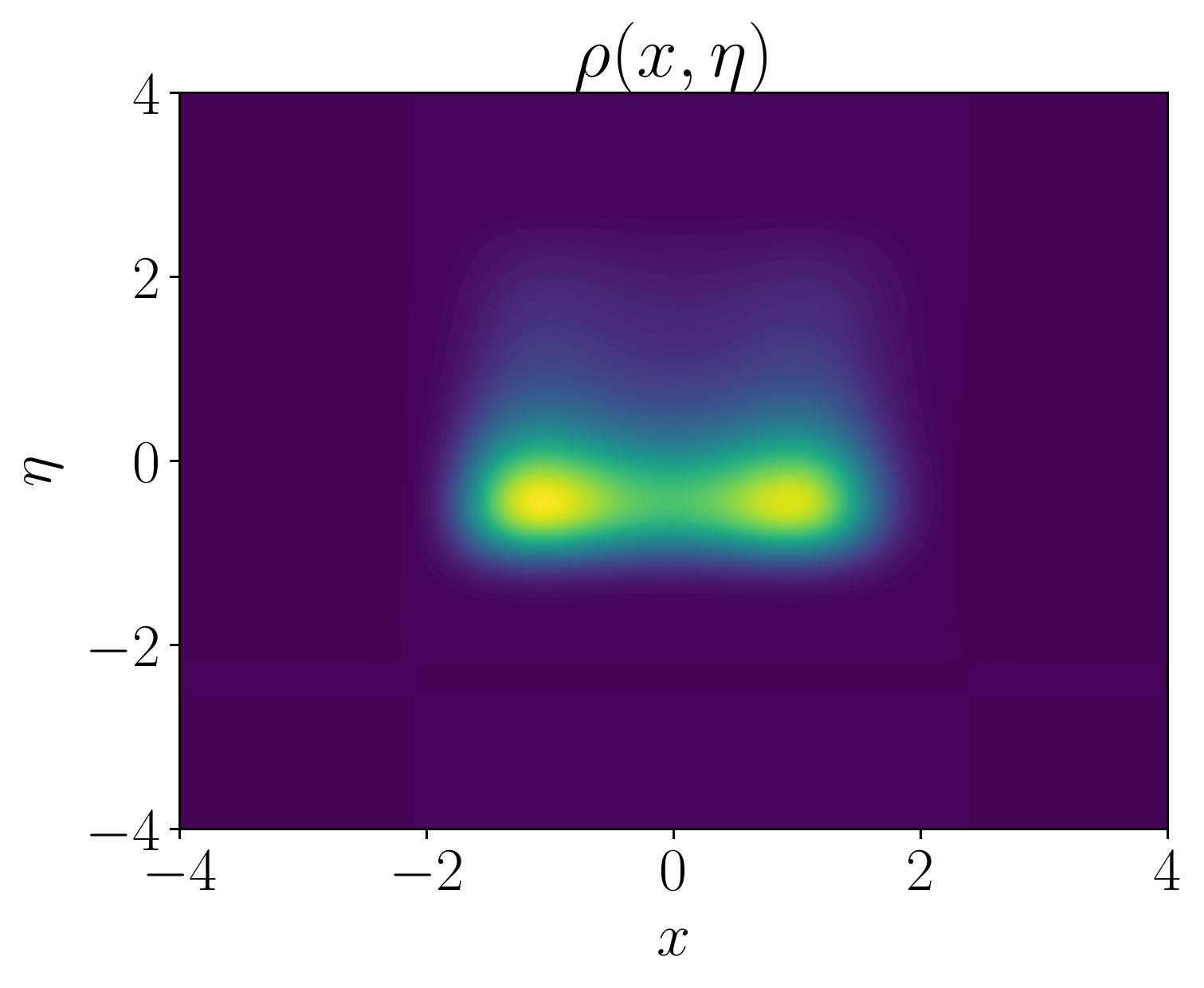}%
            \subcaption{With the corrective drift.}%
            \label{fig:nonsymmetric_potential_with_corrective}%
        \end{minipage}%
        \begin{minipage}[b]{.49\linewidth}
            \centering%
            \includegraphics[width=.7\linewidth]{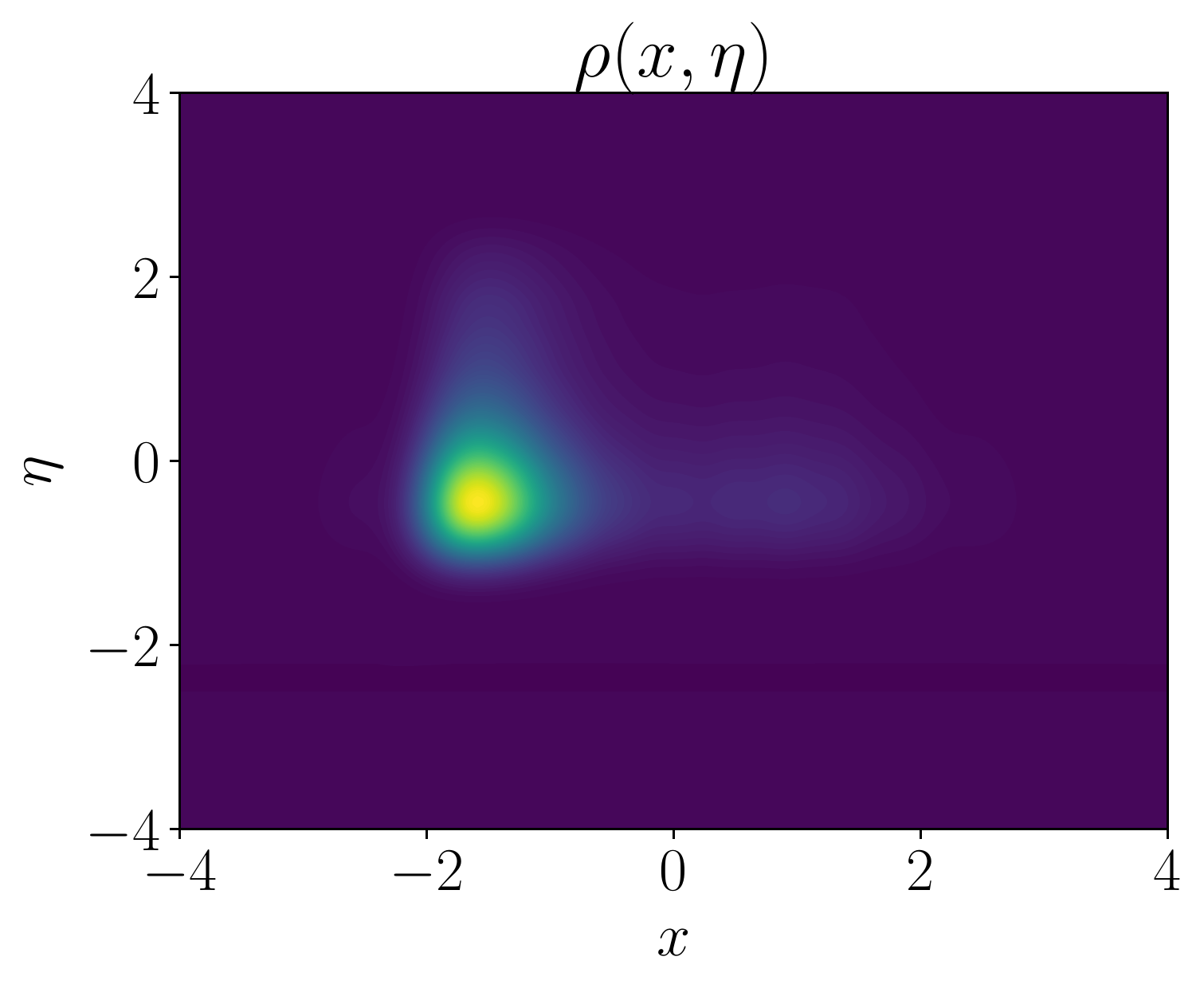}%
            \subcaption{Without the corrective drift.}%
            \label{fig:nonsymmetric_potential_without_corrective}%
        \end{minipage}%
    }
    \caption{%
        Comparison of the numerical solutions for the steady-state Fokker--Planck equation when the noise is confined by potential~\eqref{eq:potential_for_non_symmetric_case},
        with and without the corrective drift term.
        Without correction,
        the parasitic drift completely deteriorates the accuracy of the solution.
        % so much so that $\rho_d(x,\eta)$ takes negative values for $x \approx -2.7$.
    }
    \label{fig:corrective_drift:visual_comparison}
\end{figure}

We note however that, for fixed $\varepsilon$, $\mu_d \to 0$ as $d \to \infty$,
by convergence of $\varphi_{0,d}$ to $C \, \e^{-V_{\eta}}$.
This is illustrated in \cref{fig:nonsymmetric_potential_bias},
where the same parameters as those used in~\cref{fig:corrective_drift:visual_comparison} were used.
While the value of the bias decreases exponentially with the degree of approximation,
we have found through numerical experiments that,
when $\varepsilon$ is of the order of $0.01$,
failure to account for the parasitic drift is very detrimental to the accuracy of the solution for $d$ as high as 30.
\begin{figure}[ht!]
    \centering%
    \includegraphics[width=.5\linewidth]{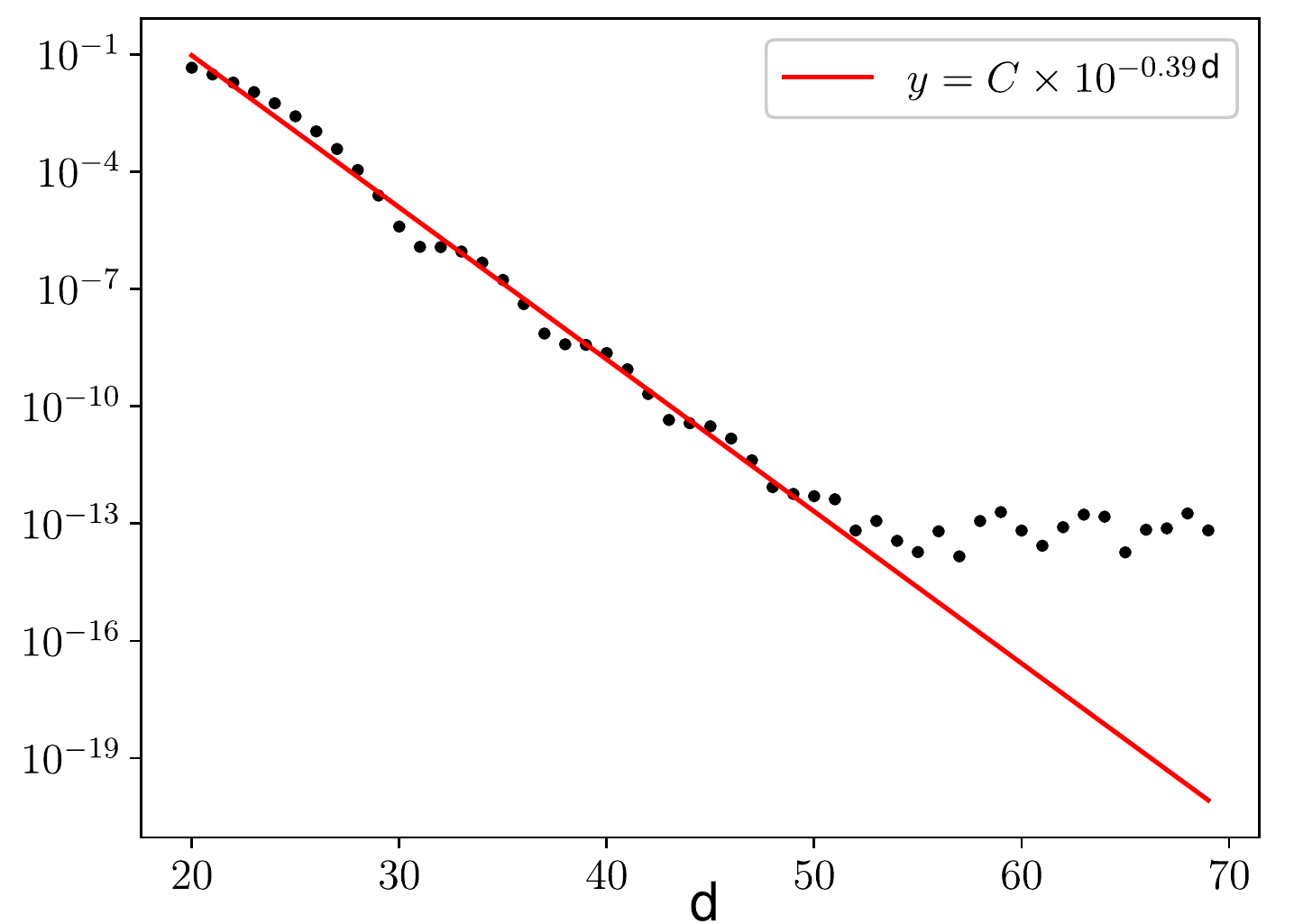}%
    \caption{%
        Normalized bias $\varepsilon \, \mu_d$ defined in~\eqref{eq:corrective_drift_term},
        against degree of approximation, for $\sigma_{\eta}^2 = 0.1$.
        (The values shown need to be divided by $\varepsilon$ to obtain the effective parasitic bias for a given value of $\varepsilon$.)
    }%
    \label{fig:nonsymmetric_potential_bias}%
\end{figure}

\subsection{Numerical verification of the rates of convergence}%
\label{sub:verification_of_the_rates_of_convergence}

In this section,
we verify numerically the rate of convergence of the stationary solution to the Galerkin formulation \cref{eq:galerkin_approximation_colored_noise} in the limit $\varepsilon \to 0$.
We recall that this Galerkin formulation is associated to the linear Fokker--Planck equation~\eqref{eq:galerkin_approximation_colored_noise}.
In addition to verifying the rates of convergence,
examining the limit $\varepsilon \to 0$ numerically will enable us to gain insight into the accuracy of the asymptotic expansions for moderate values of $\varepsilon$.

\paragraph{One-dimensional Ornstein--Uhlenbeck noise}%
\label{par:One-dimensional_Ornstein--Uhlenbeck_noise}
For this test we use the same parameters as in the convergence study for the bistable potential in \cref{sub:fokker_planck_equation_with_colored_noise_case}.
We verify the accuracy of the asymptotic expansion up to order $\varepsilon^2$ of the full solution in the $x$--$\eta$ plane,
given in~\cite[Chapter 3]{thesis_urbain},
by comparing it to numerical results obtained using the spectral method introduced in~\cref{sec:numerics}.
The convergence is presented in \cref{fig:asymptotics:convergence_in_bistable_case}.
We notice that, even for the smallest value of $\varepsilon$ considered ($2^{-6}$),
the norm of the difference between the asymptotic and spectral solutions appears to be roughly constant for $d \geq 20$.
This is because, beyond this point,
the spectral method is more accurate than the asymptotic expansion.
\begin{figure}[ht]
    \centering%
    \includegraphics[height=.3\linewidth]{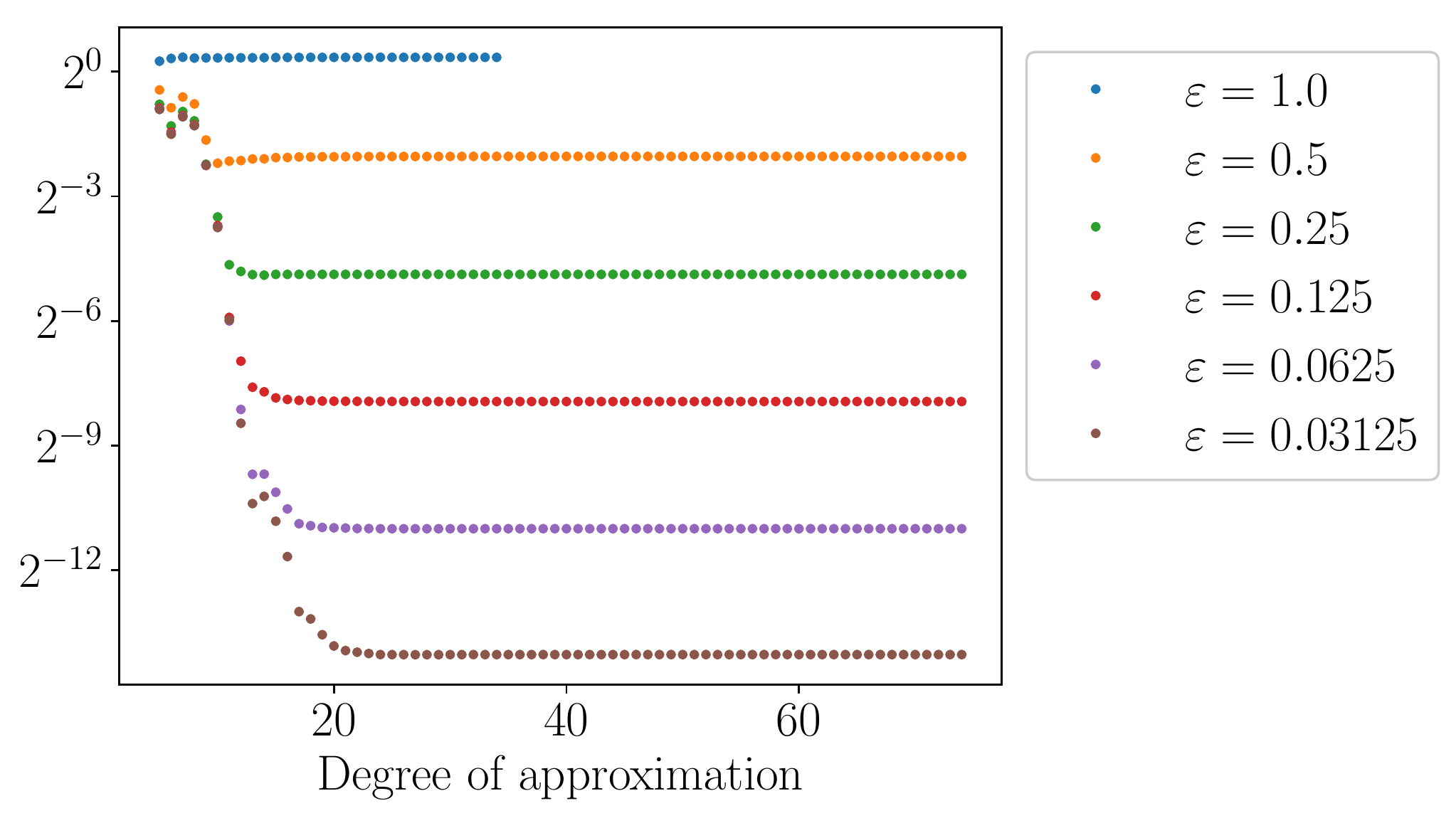}\hspace{1cm}
    \includegraphics[height=.3\linewidth]{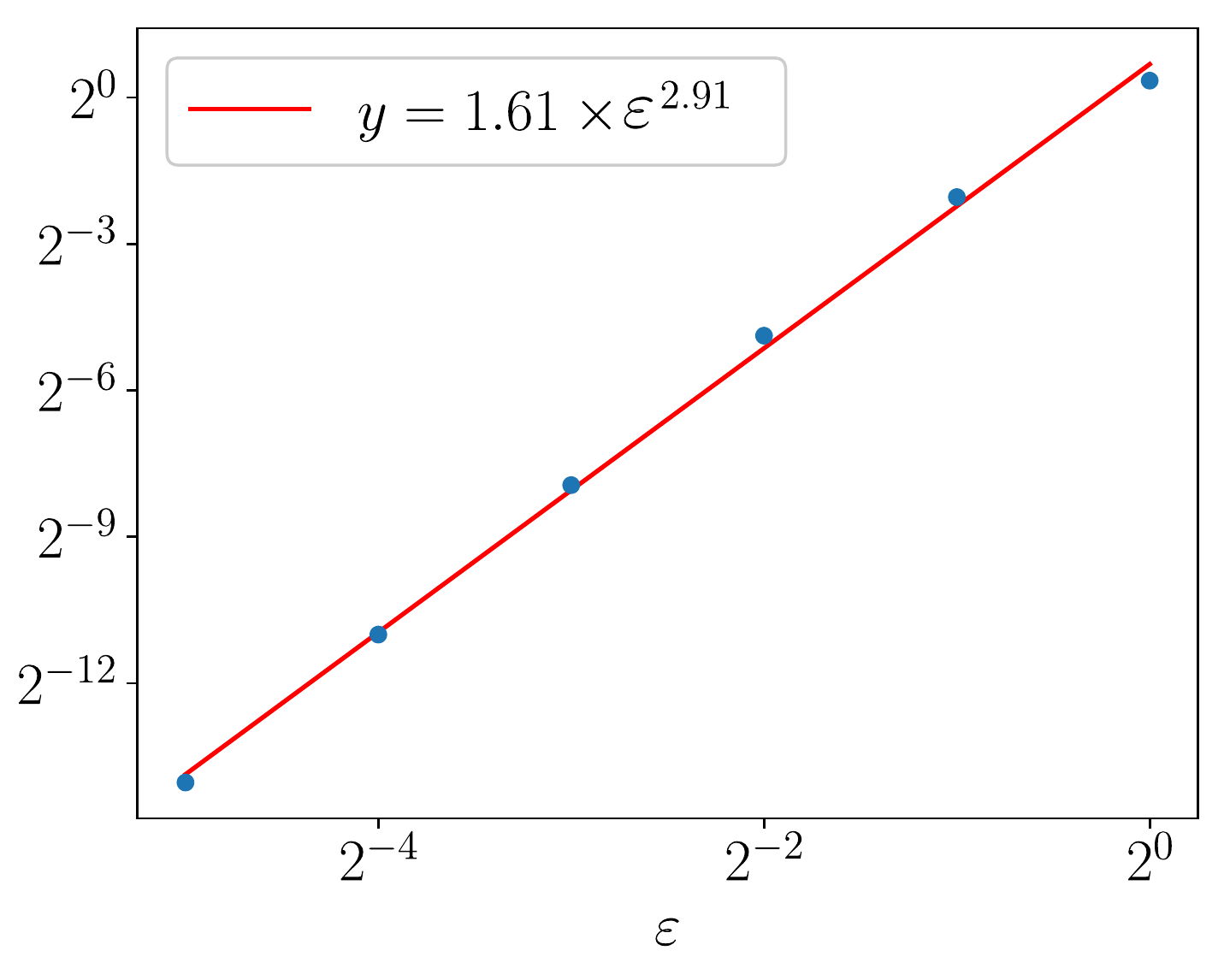}
    \caption{%
        $L^1$ norm of the difference between the solutions found using the Hermite spectral method and the asymptotic expansion up to order $\varepsilon^2$%
        , in both $x$ and $\eta$,
        in the case of {\bf OU} noise with $\varepsilon = 2^{-i}, i=0, \dotsc, 5$ (left),
        and the corresponding $L^1$ error of the asymptotic expansion (right).
        The scaling of the error (with respect to $\varepsilon$) is close to the expected value of 3.
    }%
    \label{fig:asymptotics:convergence_in_bistable_case}
\end{figure}

\paragraph{Harmonic noise}%
\label{par:hormonic_noise}
The second case we consider is that of harmonic noise,
for which the order of the first nontrival correction in the expansion of the solution is $\mathcal O(\varepsilon^4)$;
see~\cite{thesis_urbain}.
We confirmed this numerically for $V(x) = x^4/4 - x^2/2$ and $\beta = 5$
using 50 basis functions in each direction, with scaling factors $\sigma_x^2 = 1/30, \sigma_p^2 = \sigma_q^2 = 1$.
The results are illustrated in \cref{fig:harmonic_convergence_epsilon}.
\begin{figure}[ht!]
    \centering
    \includegraphics[width=.5\linewidth]{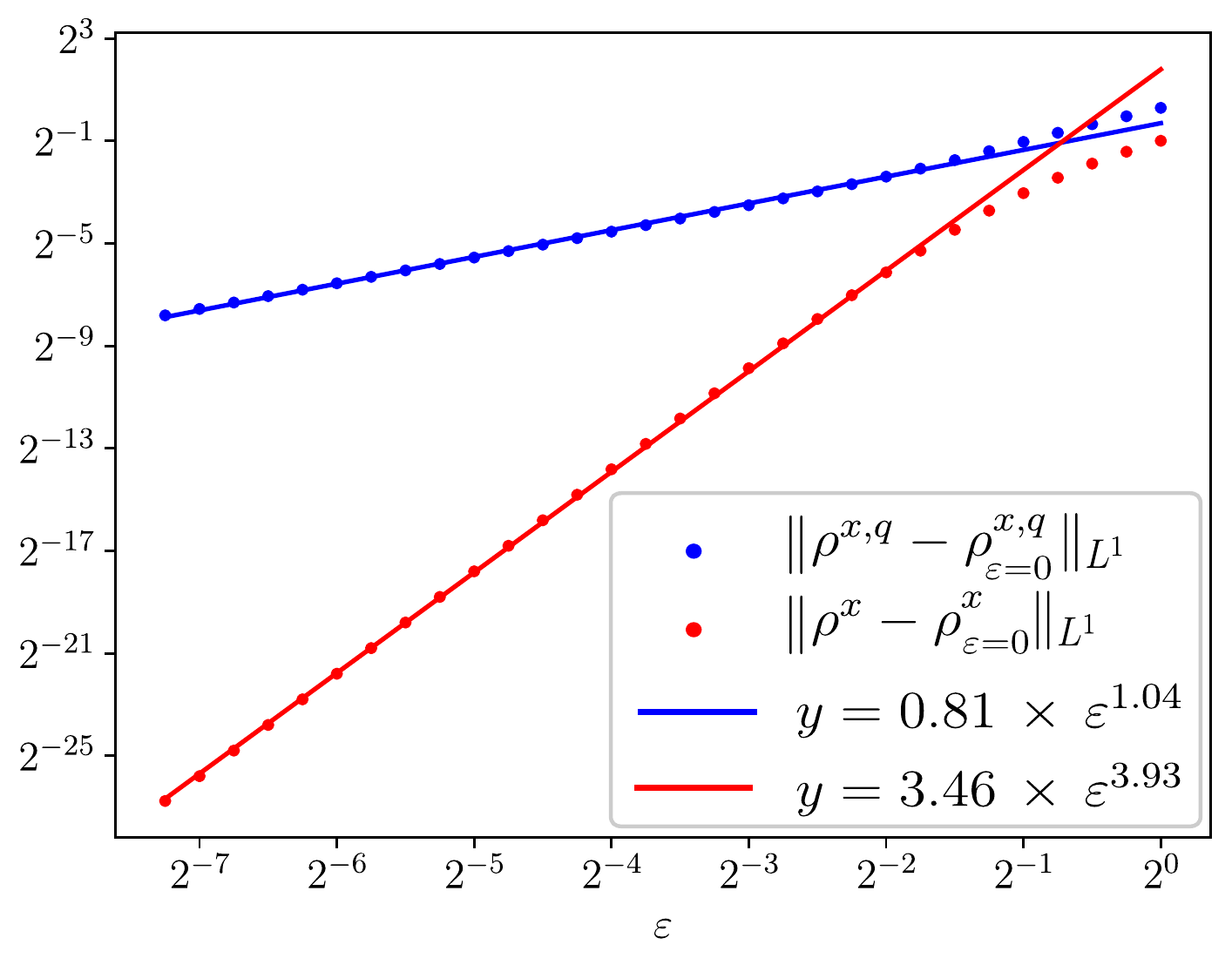}
    \caption{Convergence as $\varepsilon \to 0$ in the case of harmonic noise.
    The observed rate of convergence of the $x$-marginal to the white noise limit is $3.93$,
    which is close to the theoretical value of $4$.
    Here $\rho^{x,q}$ denotes the marginal of the solution on the $x,q$ plane.
    }%
    \label{fig:harmonic_convergence_epsilon}
\end{figure}
\fi

\section{Results: effect of colored noise on bifurcations}
\label{sec:results}

In this section we present the bifurcation diagrams corresponding to the four models of the noise introduced in \cref{sec:model}.
We begin with the case of Gaussian noise,
and later move to the case of non-Gaussian noise.

\subsection{Construction of the bifurcation diagrams for the mean field equation}
We constructed the bifurcation diagrams using three different approaches:

\paragraph{Monte Carlo simulations}
We solved the system of interacting particles~\eqref{eq:main-sde} with a sufficiently large number of particles,
and we approximated the first moment by ergodic average over an interval $(T,T+\Delta T)$,
where $T$ is sufficiently large to guarantee that the system has reached its stationary state and
$\Delta T$ is sufficiently large to ensure that the ergodic averages are accurate.
By applying this procedure for a range of inverse temperatures, $\beta = 0.1, 0.15, 0.2, \dotsc, 10$,
we obtained the desired bifurcation diagram.

\paragraph{Perturbation expansions}
This approach,
which we already outlined in \cref{sec:model},
relies on the fact that  the self-consistency map can be approximated as $R(m, \beta) \approx R_0(m, \beta) + \varepsilon^{\delta} R_{\delta}(m, \beta)$,
with good accuracy when $\varepsilon \ll 1$.
Here we used the same notation as in \cref{sec:model},
and in particular $\delta$ denotes the order of the first nontrivial correction in \cref{eq:x_marginal_asymptotic_expansion}.
Using arclength continuation\footnote{%
    We do this using the Moore--Penrose quasi-arclength continuation algorithm.
    The rigorous mathematical construction of the arclength continuation methodology can be found,
    e.g., in~\cite{Krauskopf} and~\cite{ContAllgower}.
    Some useful practical aspects of implementing arclength continuation are also given in~\cite{matcont}.
    See also~\cite{GomesEtAl2018}.
}
for the resulting approximate self-consistency equation, $m = R_0(m, \beta) + \varepsilon^{\delta} R_{\delta}(m, \beta)$,
we can plot the first moment $m$ as a function of $\beta$ for a fixed value of $\varepsilon$.
We note that, in view of the typical shape of the self-consistency map,
depicted in a particular case in \cref{fig:rm-vs-m},
a standard root finding algorithm can be employed to initiate the arclength continuation at some initial inverse temperature $\beta_0$.

\paragraph{The spectral method}
Finally, we employed the Galerkin method presented in \cref{sub:fokker_planck_equation_with_colored_noise_case}.
We considered two different methodologies:
on the one hand, by calculating numerically an approximation $\rho_{d,\infty}(x, \eta; \beta, m)$ of the steady-state solution of the linear Fokker--Planck equation~\eqref{eq:SS-general} with fixed $m$ and $\beta$,
we approximated the self-consistency map as $R(m, \beta) \approx \int_{\real} \int_{\real^n} x \, \rho_{d,\infty}(x, \eta; \beta, m) \, \d x \, \d \vect y$,
after which a bifurcation diagram can be constructed by using the same method as in the previous paragraph.
Each evaluation of the self-consistency map requires the computation of the eigenvector
associated with the eigenvalue of smallest magnitude of the discretized operator,
which can be performed efficiently for sufficiently small systems using the \emph{SciPy} toolbox.
On the other hand,
the time-dependent (nonlinear) McKean--Vlasov equation can be integrated directly using our spectral method.
Since only the final solution is of interest to us,
the semi-implicit time-stepping scheme~\eqref{eq:galerkin_approximation_white_noise_nonlinear_time_stepping}
can be used with a large time step,
which enables a quick and accurate approximation of the steady-state solutions.
While both methodologies work well in the two-dimensional case,
in three dimensions (harmonic noise) solving the McKean--Vlasov equation directly proved more efficient,
so this is the approach we employed for all the tests presented in this section.

\subsection{Gaussian case}%
\label{sub:gaussian_case}

The one-dimensional Ornstein--Uhlenbeck noise provides an ideal testbed for the three methods we use to construct bifurcation diagrams.
\Cref{fig:results:bifurcations_ou} below plots the bifurcation diagram of the first moment $m$ as a function of $\beta$ for $\varepsilon = 0.1, 0.2, \dotsc, 0.5$.
Three different initial conditions ($\vect{X}_0\sim N(0,0.1), \, \vect{X}_0\sim N(0.1,0.1)$, and $\vect{X}_0\sim N(-0.1,0.1)$) were used for the MC simulations.
Although we observe that the results of MC simulations tend to be less precise around the bifurcation point,
the agreement between the three methods overall is excellent for $\varepsilon = 0.1, 0.2$.
For the other values of $\varepsilon$,
while the results of MC simulations and of our spectral method continue to agree,
those obtained from the asymptotic expansion are significantly less accurate,
which is consistent with the observations presented in
\iflong
\cref{fig:asymptotics:convergence_in_bistable_case}.
\else
\cite{thesis_urbain}.
\fi

% With an Intel i7-3770 processor, the construction of each of the bifurcation diagrams in \cref{fig:results:bifurcations_ou} took about 30 hours with the spectral method,
% and about 20 hours with the MC simulations.
\begin{figure}[ht!]
    \centering
    \begin{minipage}[b]{.9\linewidth}%
        \includegraphics[width=\linewidth]{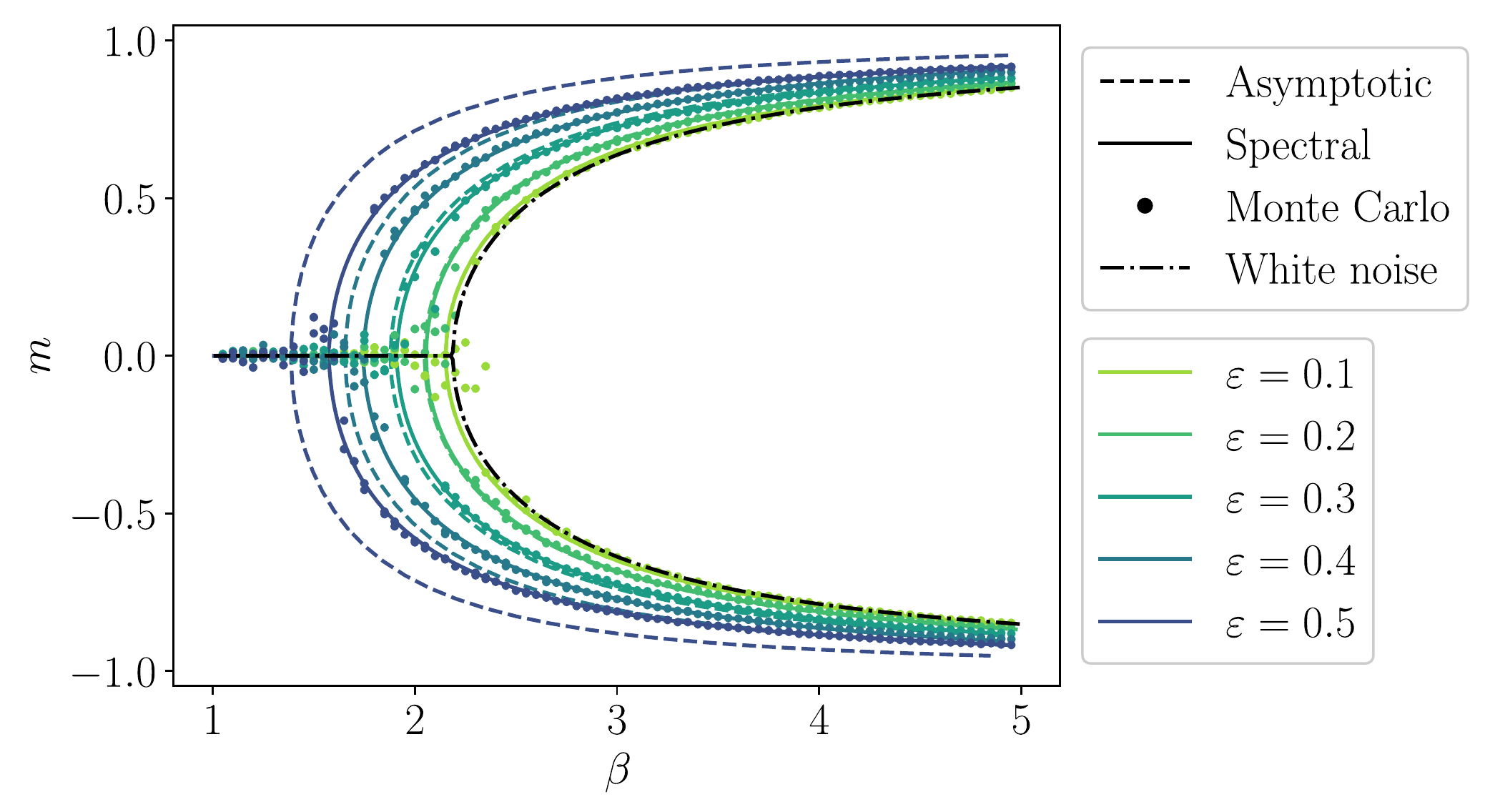}
    \end{minipage}%
    \caption{%
        Bifurcation diagram of $m$ against $\beta$ for Ornstein--Uhlenbeck noise,
        obtained via MC simulation, the spectral method, and the asymptotic expansion~\eqref{eq:p-asymptotic-OU}.
    }
    \label{fig:results:bifurcations_ou}
\end{figure}

The case of harmonic noise,
corresponding to a three-dimensional McKean--Vlasov equation,
is more challenging to tackle using our spectral method.
When using 40 basis functions in each direction,
the CPU time required to construct the full bifurcation diagram was of the order of a week.
As a consequence of the lower number of basis functions used in this case,
we observe a small discrepancy between the results of the spectral method and
those of MC simulations for large $\beta$ in the case $\varepsilon = 0.4$.
Nevertheless, as can be seen in~\cref{fig:results:bifurcations_harmonic},
for small $\varepsilon$ the overall agreement between the three methods is excellent.
We note in particular that,
as suggested by the asymptotic expansions,
the use of harmonic noise produces results much closer to the white noise limit than scalar OU noise.
\begin{figure}[ht!]
    \includegraphics[height=.31\linewidth]{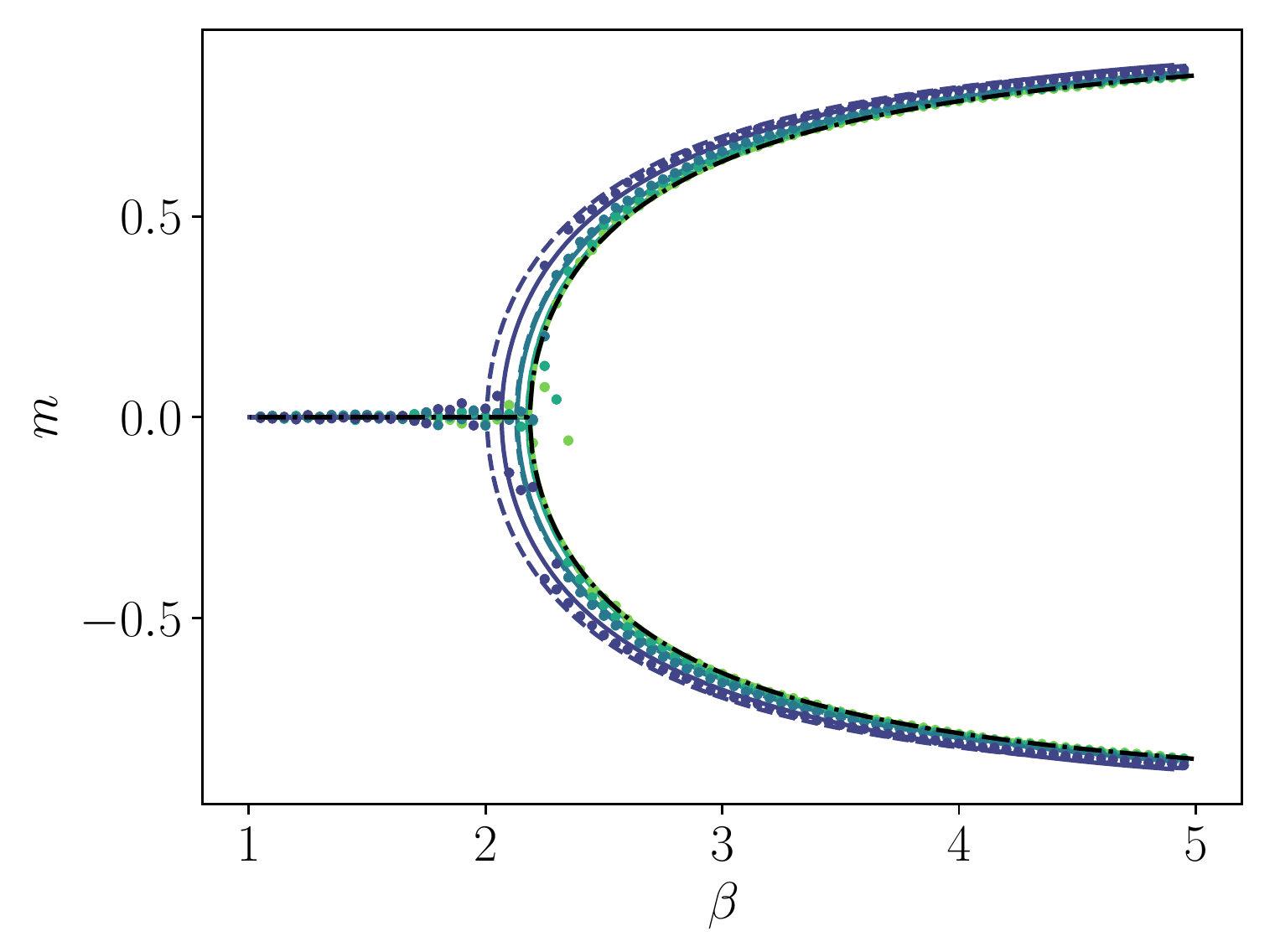}
    \includegraphics[height=.31\linewidth]{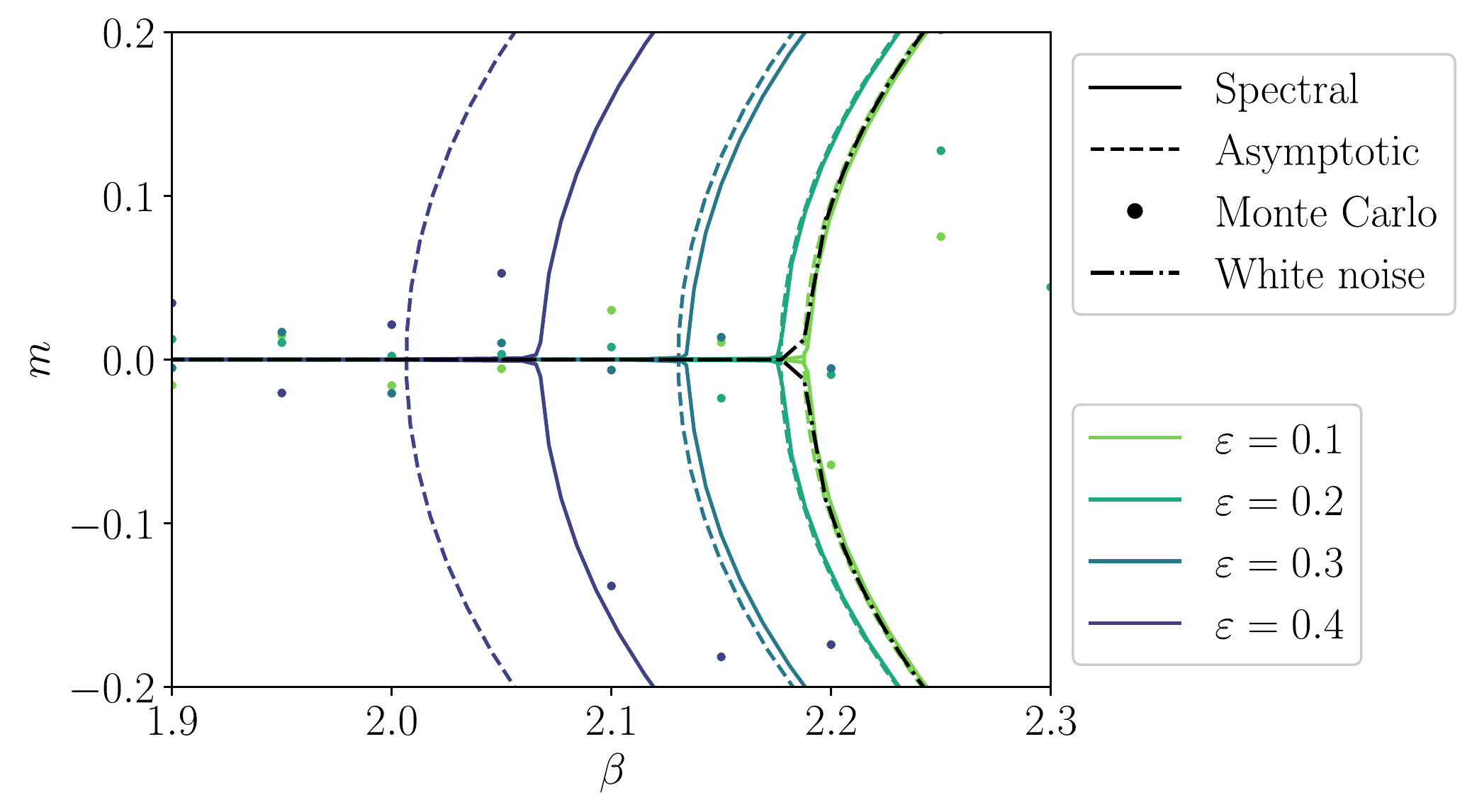}
    \caption{%
        Bifurcation diagram of $m$ against $\beta$ for harmonic noise (model {\bf H}),
        obtained via MC simulation, the Hermite spectral method, and the asymptotic expansion~\eqref{eq:p-asymptotic-OU}.
    }
    \label{fig:results:bifurcations_harmonic}
\end{figure}

\subsection{Non-Gaussian noise}%
\label{sub:non_gaussian_noise}

For the non-Gaussian noise processes we consider,
the $x^4$ asymptotic growth of the confining potentials in both directions
causes the McKean--Vlasov equation to be stiffer than in the cases of OU and harmonic noise,
% CHANGE? see e.g.~\cref{supp:fig:symmetric_non_gaussian_large_epsilon},
especially for large values of $\varepsilon$.
Consequently, we were not able to consider as wide a range of $\varepsilon$ as in the previous subsection using the spectral method.
Since, on the other hand, MC simulations become overly computationally expensive for small $\varepsilon$,
the comparisons in this section comprise only results obtained using our spectral method and asymptotic expansions.
Results of simulations for the bistable noise (model {\bf B}) are presented in \cref{fig:results:bifurcations_bistable},
in which a very good agreement can be observed.
\begin{figure}[ht!]
    \centering
    \includegraphics[height=0.31\linewidth]{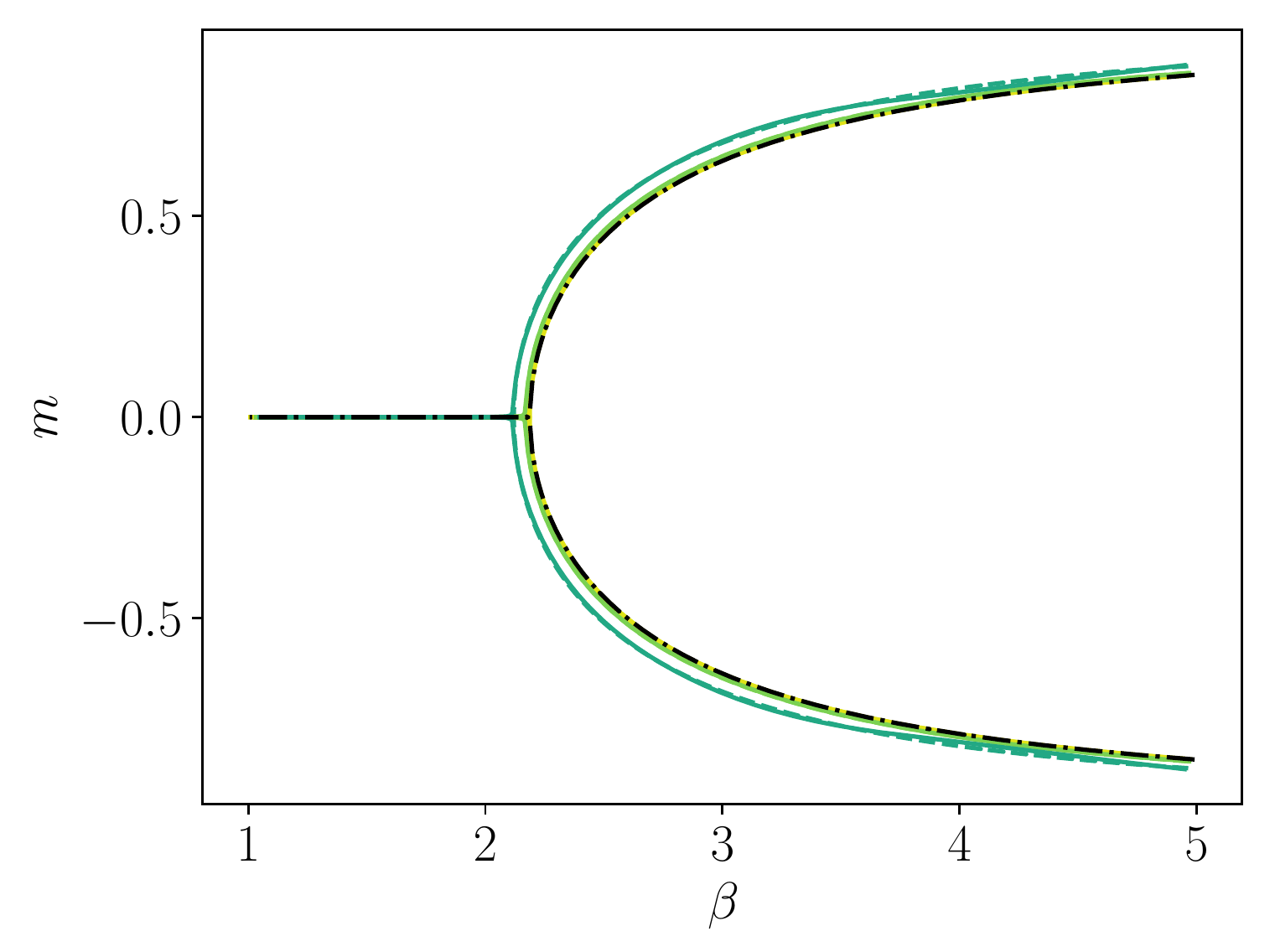}
    \includegraphics[height=0.31\linewidth]{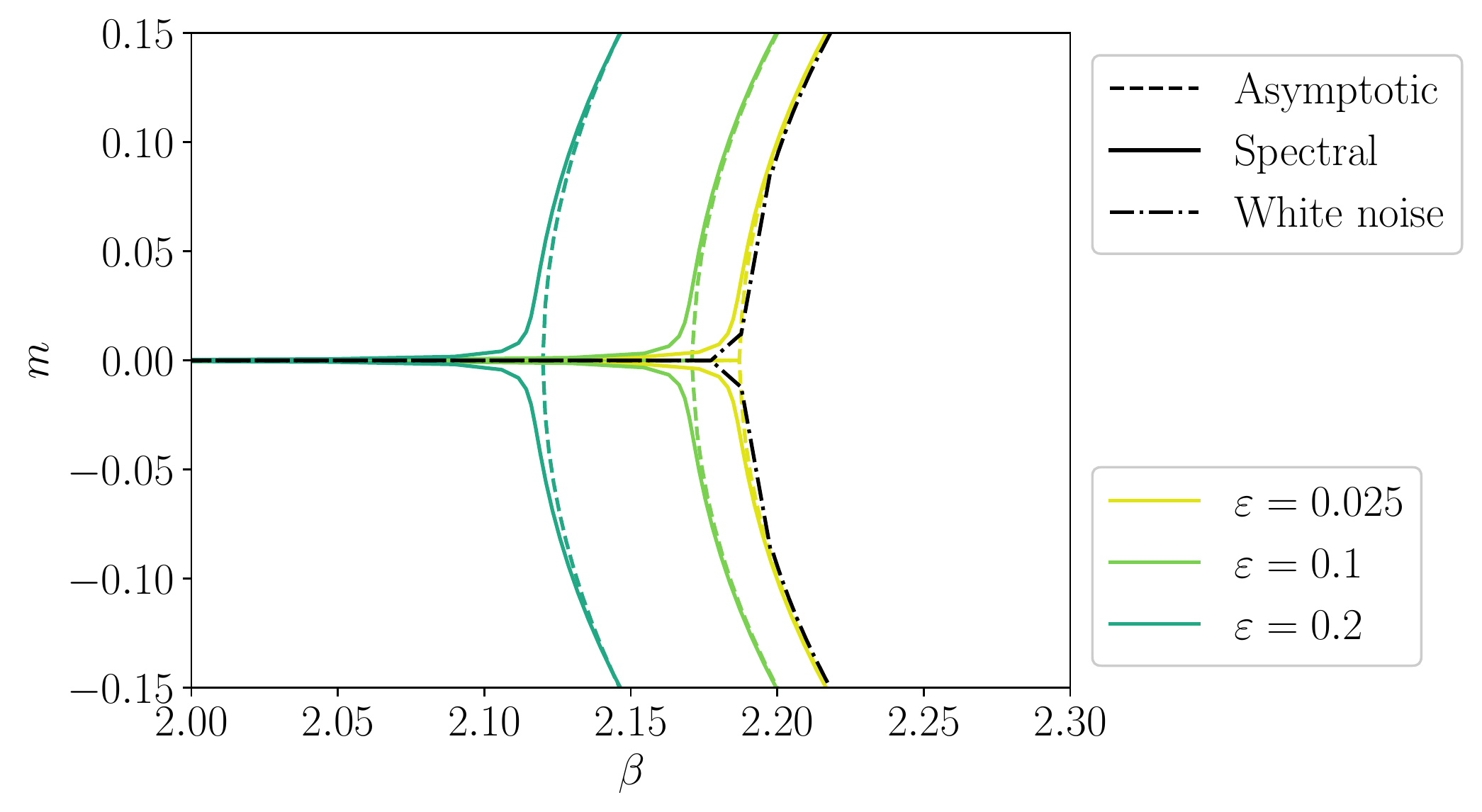}
    \caption{%
        Bifurcation diagram of $m$ against $\beta$ for the bistable noise (model {\bf B}),
        using the spectral method and a truncated asymptotic expansion including the first nonzero correction.
        % CHANGE? using the spectral method and the asymptotic expansion~\eqref{eq:asymptotics:expansion_bistable_noise}.
        We see that, overall, the agreement between the two methods is excellent.
    }
    \label{fig:results:bifurcations_bistable}
\end{figure}

For nonsymmetric noise (model {\bf NS}),
the two branches in the bifurcation diagram are separate,
as illustrated in~\cref{fig:results:bifurcations_ns}.
Here too, the agreement between the spectral method and the asymptotic expansion is excellent.
In contrast with the other models considered,
the first nonzero term in the asymptotic expansion is of order $\varepsilon$,
which is reflected by the manifestly higher sensitivity to the correlation time of the noise.
In the right panel of~\cref{fig:results:bifurcations_ns},
we present the graph of $R_0(m; \beta) + \varepsilon R_1(m; \beta)$ for a value of $\beta$ close to the point at which new branches (one stable and one unstable) emerge.
\begin{figure}[ht!]
    \centering
    \includegraphics[height=.31\linewidth]{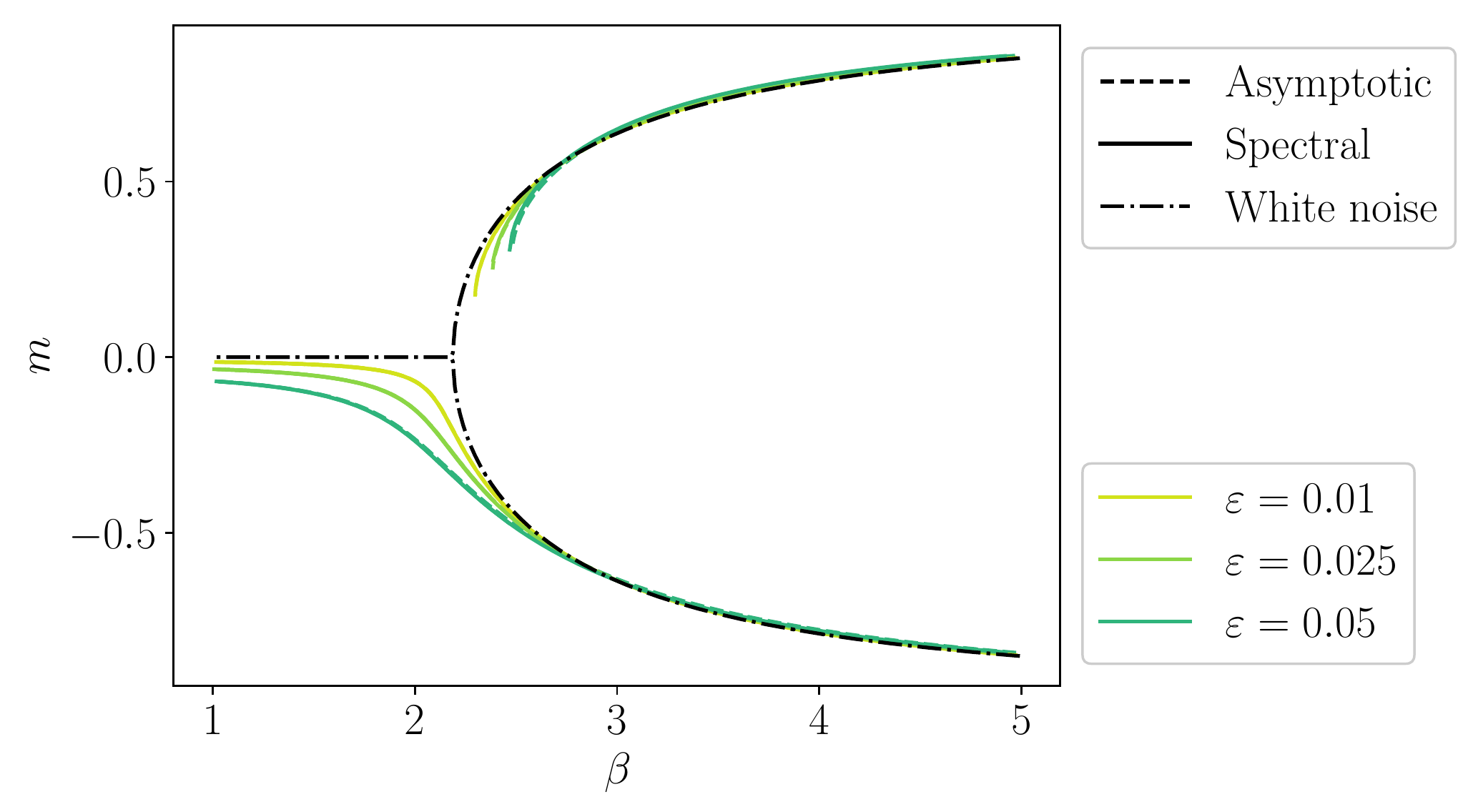}
    \includegraphics[height=.31\linewidth]{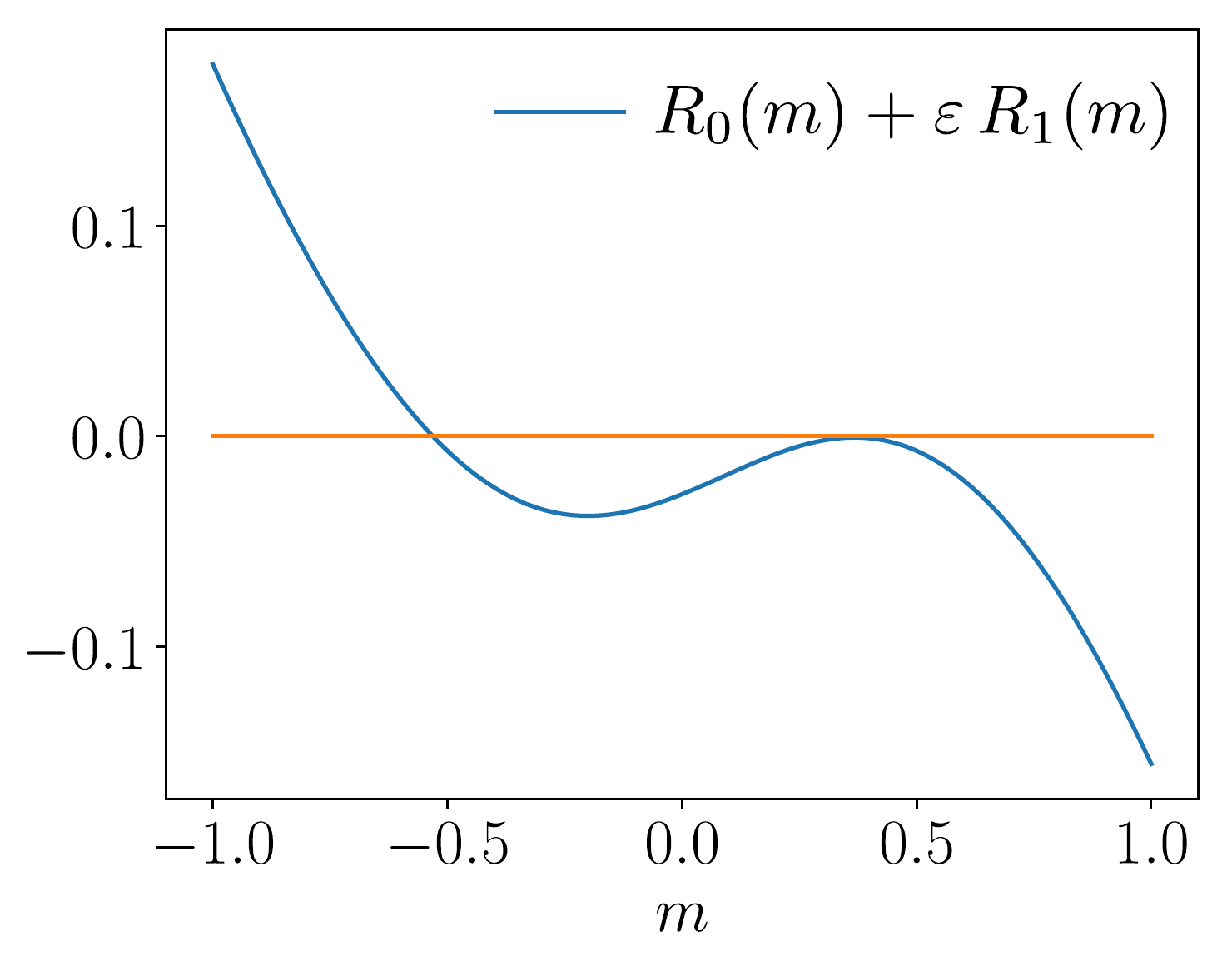}
    \caption{%
        Left: bifurcation diagram of $m$ against $\beta$ for the nonsymmetric noise (model {\bf NS}),
        % CHANGE? using the spectral method and the asymptotic expansion~\eqref{eq:asymptotics:expansion_nonsymmetric_noise}.
        using the spectral method and a truncated asymptotic expansion including the first nonzero correction.
        Right: $R_0 + \varepsilon R_1(m) - m$ against $m$ for $\varepsilon = 0.1$ and $\beta = 2.6$.
    }
    \label{fig:results:bifurcations_ns}
\end{figure}

\subsection{Dependence of the critical temperature on \texorpdfstring{$\varepsilon$}{e}}%
\label{sub:critical_temperature}

% A similar behavior is observed in the case of colored noise,
% except in the case (\textbf{NS}),
% for which the asymmetry of the potential breaks the symmetry of the bifurcation diagram.

% In all but the \textbf{NS} case,
% the critical temperature can be obtained by differentiating with respect to $m$ the expression $F(m, \beta) = R(m, \beta)-m$ and
% equating the slope at $m=0$ to 0 -- it is clear from \cref{fig:rm-vs-m} that when this happens,
% the two lines go from having one intersection, to having three.
% When $\varepsilon \ll 1$, we can approximate $R(m, \beta)$ as $R_0(m, \beta) + R_{\delta}(m, \beta)$
% and obtain a nonlinear equation for $\beta_C$ as a function of $\theta$ and $\varepsilon$.
% Throughout this paper, we will fix $\theta = 1$ and study only the dependence of $\beta_C$ on $\varepsilon$.

% but such a simple expression cannot be obtained in the coloured noise case,
% even when using approximations from the asymptotic analysis,
% because the constants $C_H$ and $C_{OU}$ in the $O(\varepsilon^2)$ and $O(\varepsilon^4)$ corrections in \cref{eq:p-asymptotic-OU,eq:p-asymptotic-H}, respectively,
% depend on $m$ and $\beta$.

For the noise models {\bf OU}, {\bf H} and {\bf B},
the effect of colored noise on the dynamics is a shift of the critical temperature:
the pitchfork bifurcation occurs for \emph{smaller} values of $\beta$ (i.e., larger temperatures) as the correlation time increases.
In order to further investigate the effect of the correlation time on
the long time behavior of the system of interacting particles,
we will compute the critical temperature as a function of $\varepsilon$ based on the asymptotic expansions
and compare with the results of spectral and MC simulations,
see \cref{fig:critical_temperature_epsilon}.
Rather than finding the critical inverse temperature $\beta_C$ for a range of values of $\varepsilon$ (and for a fixed $\theta$),
it is convenient to fix $\beta_{C}$ and find the corresponding $\varepsilon$, satisfying
\begin{equation}
    \label{eq:critical_epsilon}
    \derivative*{1}{m} \left( \int_{\real} x \, p_0(x; \beta_C, m) \, \d x \right)_{m=0} +  \varepsilon^{\delta} \derivative*{1}{m} \left(\int_{\real} p_{\delta}(x; \beta_C, m) \, \d x \right)_{m=0} = 1,
\end{equation}
which is merely a polynomial equation in $\varepsilon$,
the coefficient of which can be calculated by numerical differentiation.
With this procedure,
the dependence of the critical $\beta$ upon $\varepsilon$ can be calculated on a fine mesh.
In the case of {\bf OU} noise, for example,
both coefficients on the left-hand side of \cref{eq:critical_epsilon} are positive,
implying that the equation has a solution
(in fact, two, but one of them negative)
only if $\beta_C$ is lower than
the inverse critical temperature in the white noise case.

Of the three methods employed in \cref{fig:critical_temperature_epsilon},
the approach based on the asymptotic expansions has the lowest computational cost:
calculating all the solid curves took only about a couple of minutes on a personal computer with an Intel i7-3770 processor.
The data points associated with the spectral method and the MC simulations were obtained from the bifurcation diagrams presented above.
% refer to \cref{sub:gaussian_case} for the corresponding computational costs.
% A similar behavior is observed for the case of harmonic noise but the effect is much less pronounced.
\begin{figure}[ht!]
    \centering
    \includegraphics[width=.8\linewidth]{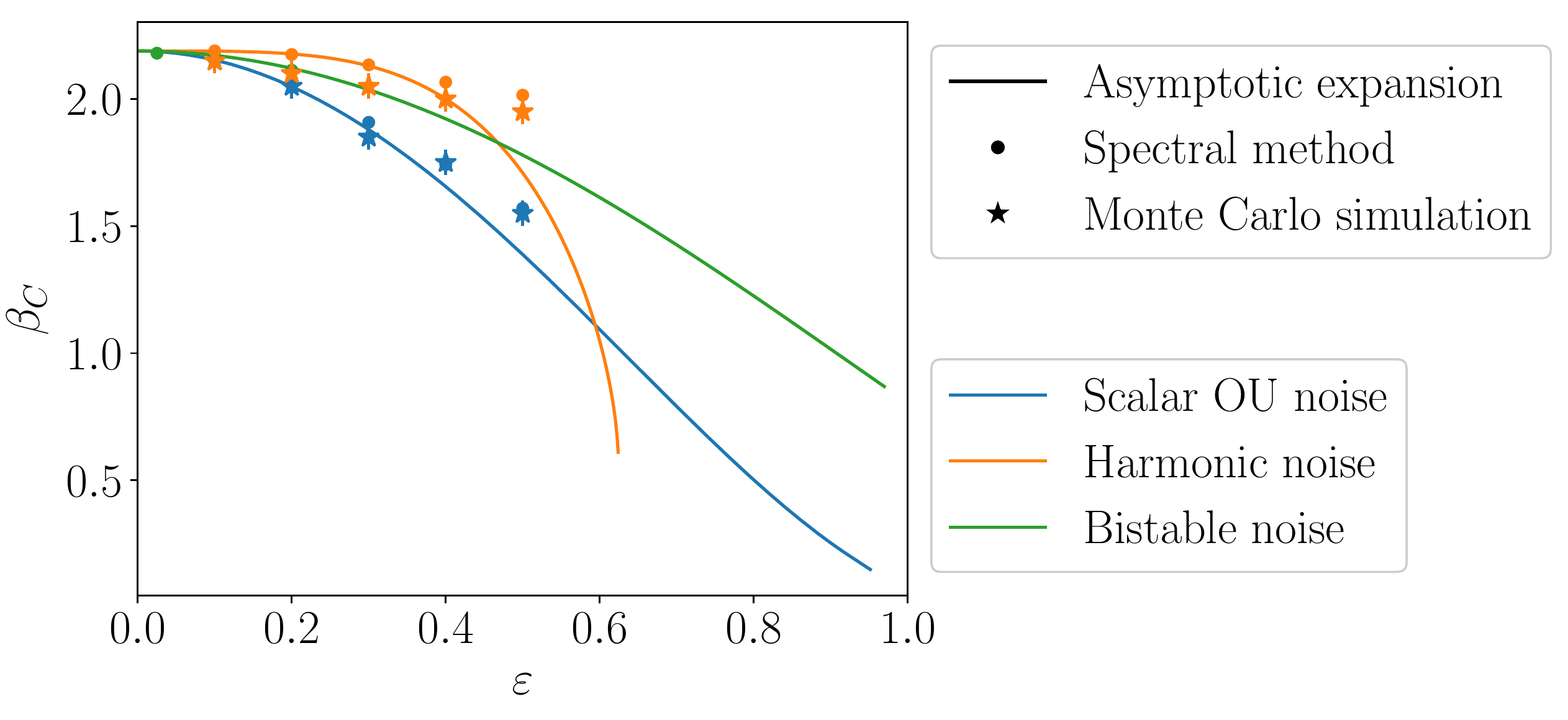}
    \caption{Critical $\beta$ against $\varepsilon$.}
    \label{fig:critical_temperature_epsilon}
\end{figure}

\section{Conclusions}\label{sec:conclusions}
In this paper, we introduced a robust spectral method
for the numerical solution of linear and nonlinear, local and nonlocal Fokker--Planck-type PDEs
that does not require that the PDE is a gradient flow.
We then used our method to construct the bifurcation diagram for the stationary solutions of the mean field limit of a system of weakly interacting particles driven by colored noise.
% In contrast with the finite volume scheme employed in~\cite{GomesPavliotis2017},
% our method does not require a gradient structure,
% and it is therefore applicable with both white noise and colored noise.
% We employed it to study the phase transitions of a system of interacting particles subject to colored noise in a bistable confining potential,
% and in particular we constructed bifurcation diagrams of the first moment of the stable equilibria with respect to the inverse temperature
% for different models of the noise, Gaussian and non-Gaussian.
% By means of extensive numerical experiments,
% we demonstrated the spectral convergence of the method and its applicability in the small correlation time regime.

To verify our results,
we also constructed the bifurcation diagrams by using two other independent approaches,
namely by MC simulation of the $N$-particle system and
by using explicit asymptotic expansions with respect to correlation time of the noise.
In the small correlation time regime, we observed a very good agreement between all three methods.
For larger values of the correlation time,
the asymptotic expansions become inaccurate,
but the results obtained via the spectral method and MC simulations continue to be in good agreement.

It appeared from our study that,
unless the potential in which the noise process is confined is asymmetric,
the correlation structure of the noise does not influence the topology of the bifurcation diagram:
the mean-zero steady-state solution,
which is stable for sufficiently large temperatures,
becomes unstable as the temperature decreases below a critical value,
at which point two new stable branches emerge,
in the same manner as reported in~\cite{Dawson1983,shiino1987}.
The correlation structure does, however, influence the temperature at which bifurcation occurs,
and in general this temperature increases as the correlation time of the noise increases.
In the presence of an asymmetry in the confining potential of the noise,
on the other hand,
the two stable branches in the bifurcation diagram are separate,
indicating that the system always reaches the same equilibrium upon slowly decreasing the temperature.
This behavior is similar to what has been observed previously in the white noise case
when a tilt is introduced in the confining potential $V(\cdot)$, see~\cite{GomesEtAl2018,GomesPavliotis2017}.

Several problems remain open for future work.
On the theoretical front,
we believe that the analysis we presented in~\cref{sub:white_noise_case,sec:proof_convergence_theorem} for the linear Fokker--Planck equation
can be extended to both the linear Fokker--Planck equation with colored noise and the nonlinear McKean--Vlasov equation.
Another direction for future research could be the rigorous study of bifurcations and,
more specifically, of fluctuations and critical slowing down near the bifurcation point.
On the modeling front, it would be interesting
to consider more general evolution equations for the interacting particles,
such as the generalized Langevin equation,
and also
to study systems of interacting particles subject to colored noise that is multiplicative.

\appendix
\section{Hermite polynomials and Hermite functions}%
\label{sec:hermite_polynomials}

In one dimension, the orthonormal Hermite polynomials can be defined by:
\begin{equation*}
    \label{eq:hermite_polynomials_1d}
    H_n(x) = \frac{(-1)^n}{\sqrt{n!}} \exp\left({\frac{x^2}{2}}\right) \derivative*{n}{x^n} \, \left(\exp{\left(-\frac{x^2}{2}\right)}\right), \qquad n = 0, 1, \dotsc
\end{equation*}
They form a complete orthonormal basis of the weighted space $\lp{2}{\real}[g]$,
where $g := \e^{-x^2/2}/\sqrt{2\, \pi}$,
and they satisfy the following recursion relations
(\cref{eq:hermite_polynomials_1d_recursion_three_terms} can be obtained by combining \cref{eq:hermite_polynomials_1d_recursion_adjoint_derivative,eq:hermite_polynomials_1d_recursion_derivative}):
\begin{align}
    \label{eq:hermite_polynomials_1d_recursion_adjoint_derivative}
    &\left(x - \derivative*{1}{x}\right) \, H_n = \sqrt{n+1} \, H_{n+1}; \\
    \label{eq:hermite_polynomials_1d_recursion_derivative}
    &\derivative*{1}[H_{n+1}]{x} = \sum^{n}_{i=0} \, H_i \, \int_{\real} \derivative*{1}[H_{n+1}]{x} \, H_i(x) \, g(x) \, \d x = \sqrt{n+1} \, H_{n}; \\
    \label{eq:hermite_polynomials_1d_recursion_three_terms}
    &H_{n+1} = \sqrt{\frac{1}{n+1}} \, x \, H_{n} - \sqrt{\frac{n}{n+1}} H_{n-1}.
\end{align}
From \cref{eq:hermite_polynomials_1d_recursion_adjoint_derivative,eq:hermite_polynomials_1d_recursion_derivative},
we see that Hermite polynomials are the eigenfunctions of a second-order operator:
\begin{equation}
    \label{eq:hermite_polynomials_1d_as_eigenfunctions}
    \mathcal L \, H_i := \partial^*_x \partial_x \, H_i := \left(x - \derivative*{1}{x}\right) \derivative*{1}{x} H_i = i \, H_i,
\end{equation}
which is essential to proving approximation results.
We note that the eigenvalues grow linearly,
which explains the square root in the rate of convergence
in the results presented below.
Since Hermite polynomials constitute an orthonormal basis of $\lp{2}{\real}[g]$,
any function $u$ in that space can be expanded as a series of Hermite polynomials, and
\begin{equation*}
    % \label{eq:hermite_polynomials_expansion}
    \sum_{i=0}^{d} H_i \, \ip{u}{H_i}[g] \to u \quad \text{in}~\lp{2}{\real}[g]~\text{as}~d \to \infty.
\end{equation*}
We will call \emph{Hermite transform} the operator:
\begin{align*}
    \mathcal T: \, &\lp{2}{\real}[g] \to \ell^2 \\
                   &u \mapsto \big( \ip{u}{H_0}[g], \ip{u}{H_1}[g], \dotsc \big).
\end{align*}
Denoting by $\Pi_d$ the $\lp{2}{\real}[g]$ projection operator on $\Span \{ H_0, H_1, \dotsc, H_d \}$,
the following theorem follows from~\eqref{eq:hermite_polynomials_1d_recursion_derivative}; see~\cite{shen2011spectral} for details.
\begin{theorem}
    [Approximation by Hermite polynomials]
    \label{thm:hermite_polynomials_approximation}
    For any $u \in \sobolev{\m}{\real}[g]$ with $0 \leq \m \leq d + 1$,
    the following inequality holds:
    \begin{equation*}
        \label{eq:hermite_convergence_polynomials}
        \left\|\derivative*{\l}{x^\l} (\Pi_d u - u)\right\|_g \leq \sqrt{\frac{(d-\m+1)!}{(d-\l+1)!}} \, \norm{\derivative*{\m}[u]{x^\m}}_g, \qquad \l = 0, \dotsc, \m.
    \end{equation*}
\end{theorem}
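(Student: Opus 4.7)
The plan is to prove the bound by the standard spectral-expansion argument based on the eigenstructure of $\mathcal L = \partial_x^*\,\partial_x$. Since $\{H_i\}$ is an orthonormal basis of $\lp{2}{\real}[g]$, I expand $u = \sum_{i=0}^{\infty} \hat u_i H_i$ with $\hat u_i = \ip{u}{H_i}[g]$, so that $\Pi_d u - u = -\sum_{i=d+1}^{\infty} \hat u_i H_i$. The key ingredient is the derivative recursion~\eqref{eq:hermite_polynomials_1d_recursion_derivative}, iterated $\l$ times, which yields
\begin{equation*}
    \derivative*{\l}{x^\l} H_i = \sqrt{\frac{i!}{(i-\l)!}}\, H_{i-\l}, \qquad i \geq \l,
\end{equation*}
and vanishes for $i < \l$. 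Combined with orthonormality of the $H_{i-\l}$ in $\lp{2}{\real}[g]$, this gives the identity
\begin{equation*}
    \Bigl\|\derivative*{\l}{x^\l}(\Pi_d u - u)\Bigr\|_g^2 = \sum_{i=d+1}^{\infty} \frac{i!}{(i-\l)!}\, |\hat u_i|^2,
\end{equation*}
and the analogous expression with $\l$ replaced by $\m$ on the whole sum,
\begin{equation*}
    \Bigl\|\derivative*{\m}[u]{x^\m}\Bigr\|_g^2 = \sum_{i=\m}^{\infty} \frac{i!}{(i-\m)!}\, |\hat u_i|^2.
\end{equation*}

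Next I would rewrite the first sum by inserting the factor $i!/(i-\m)!$ and its reciprocal, obtaining
\begin{equation*}
    \Bigl\|\derivative*{\l}{x^\l}(\Pi_d u - u)\Bigr\|_g^2 = \sum_{i=d+1}^{\infty} \frac{(i-\m)!}{(i-\l)!}\cdot \frac{i!}{(i-\m)!}\, |\hat u_i|^2,
\end{equation*}
which is valid because the tail starts at $i \geq d+1 \geq \m$ (using the hypothesis $\m \leq d+1$). The remaining task is to bound the factorial ratio $(i-\m)!/(i-\l)!$ uniformly for $i \geq d+1$. Since $\m \geq \l$, this ratio equals $\bigl[(i-\l)(i-\l-1)\cdots(i-\m+1)\bigr]^{-1}$, a product of $\m - \l$ strictly increasing factors in $i$, and is therefore maximized at the smallest admissible index $i = d+1$. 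This yields the pointwise bound
\begin{equation*}
    \frac{(i-\m)!}{(i-\l)!} \leq \frac{(d-\m+1)!}{(d-\l+1)!}, \qquad i \geq d+1.
\end{equation*}

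Pulling this constant out of the sum and bounding the remaining series by $\|\partial_x^{\m} u\|_g^2$ yields the desired inequality after taking square roots. I do not foresee a real obstacle here, as the argument is purely algebraic once the eigenstructure is in place; the only point deserving some care is the admissibility of all the manipulations when $\m = d+1$ or $\l = \m$, in which case the ratio becomes $1$ (for $\l = \m$) or reduces the tail sum to a single nontrivial bound, and the stated inequality degenerates into a trivial one. A minor density issue also arises because the expansion identities above are first derived for $u$ smooth enough and then extended to $u \in \sobolev{\m}{\real}[g]$ by a standard density argument using the completeness of Hermite polynomials.
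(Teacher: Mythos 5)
Your proof is correct and is exactly the standard spectral-expansion argument that the paper itself invokes: the paper does not prove this theorem in-line but states that it "follows from the derivative recursion" and defers to the reference \cite{shen2011spectral}, where the proof proceeds precisely as you describe (term-by-term differentiation of the Hermite expansion, Parseval, and the monotone bound on the factorial ratio over the tail $i \geq d+1$). Your handling of the edge cases $\l = \m$ and $\m = d+1$ and the closing density remark are the right points of care, and nothing is missing.
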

The Hermite polynomials, as defined above,
are suitable for the approximation of functions with respect to the norm of $\lp{2}{\real}[g]$,
which assigns a significant weight only to the region around $x = 0$.
For the approximation with respect to the flat $\lp{2}{\real}$ norm,
or with respect to other norms that penalize growth as $x \to \infty$, such as the weighted $\lp{2}{\real}[e^{x^2/2}]$ norm,
one can use the basis functions $(\e^{-U/2} \, H_i)_{i = 0}^{\infty}$ for some function $U$,
which constitute an orthonormal basis of $\lp{2}{\real^n}[\e^{U}g]$.
For $u \in \lp{2}{\real}[\e^{U}g]$,
we define the generalized Hermite transform associated with the factor $\e^{-U/2}$,
which we denote by $\mathcal T_{U}$:
\begin{equation}
\label{eq:generalized_hermite_transform}
\begin{aligned}
    \mathcal T_{U}: \, &\lp{2}{\real}[\e^{U}g] \to \ell^2 \\
                       &u \mapsto \left( \ip{u}{\e^{-U/2} H_0}[\e^{U}g], \ip{u}{\e^{-U/2} H_1}[\e^{U}g], \dotsc \right).
\end{aligned}
\end{equation}
Note that $\mathcal T_{U} (u) = \mathcal T (\e^{U/2} u)$,
i.e.\ $T_U(u)$ is the usual Hermite transform of $\e^{U/2} u$.
This formalism enables us to treat in a unified manner
the case of Hermite polynomials ($U = 0$),
of Hermite functions ($\e^{-U/2} = \sqrt{g}$),
as well as other useful cases.
As an example of why such generality can be useful,
it has been shown in~\cite{MR1933042} that the choice $\e^{-U/2} = g$
leads to basis functions,
referred to as generalized Hermite functions in that paper,
that can be used to design an efficient numerical method for the solution of the Kramers Fokker--Planck equation.
By the property~\eqref{eq:hermite_polynomials_1d_as_eigenfunctions},
we see that $(\e^{-U/2} H_i)_{i=0}^{\infty}$ are the eigenfunctions of the operator:
\begin{equation*}
    u \mapsto (\e^{-U/2} \mathcal L \e^{U/2}) u,
\end{equation*}
which is of Schrödinger type when $\e^{-U} = g$, see~\cite{pavliotis2011applied}.
Introducing the notations $\Pi_d^U := \e^{-U/2} \Pi_d \e^{U/2}$ and $\partial_x^U := \partial_x + \derivative*{1}[U]{x}/2 = \e^{-U/2} \partial_x (\e^{U/2} \cdot )$,
we have the following immediate corollary of \cref{thm:hermite_polynomials_approximation}.
\begin{corollary}
    [Approximation by generalized Hermite functions]
    \label{thm:hermite_functions_approximation}
    For any $u$ such that $(\partial_x^U)^\m u \in \lp{2}{\real}[\e^{U}g]$ with $0 \leq \m \leq d + 1$,
    \begin{equation*}
        \label{eq:hermite_convergence_functions}
        \norm{(\partial_x^U)^\l(\Pi_d^U u - u)}_{\e^{U} g} \leq \sqrt{\frac{(d-\m+1)!}{(d-\l+1)!}} \, \norm{(\partial_x^U)^\m u}_{\e^{U} g}, \qquad \l = 0, \dotsc, \m.
    \end{equation*}
\end{corollary}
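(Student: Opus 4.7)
The plan is to reduce \cref{thm:hermite_functions_approximation} to \cref{thm:hermite_polynomials_approximation} via the unitary change of variables $v := \e^{U/2} u$. Since $u \mapsto \e^{U/2} u$ is an isometry from $\lp{2}{\real}[\e^U g]$ onto $\lp{2}{\real}[g]$, any regularity hypothesis on $u$ in the weighted space transfers directly to $v$ in the lighter space. Moreover, by the very definition $\Pi_d^U := \e^{-U/2} \, \Pi_d \, \e^{U/2}$, we have $\Pi_d^U u = \e^{-U/2} \, \Pi_d v$, and hence $\Pi_d^U u - u = \e^{-U/2}(\Pi_d v - v)$.

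The second ingredient is the operator identity $\partial_x^U = \e^{-U/2} \, \partial_x \, \e^{U/2}$, which iterates to $(\partial_x^U)^{\l} = \e^{-U/2} \, \partial_x^{\l} \, \e^{U/2}$ by telescoping cancellation of adjacent $\e^{U/2}\e^{-U/2}$ factors; I would verify this by a one-line induction on $\l$. Consequently $(\partial_x^U)^{\l}(\Pi_d^U u - u) = \e^{-U/2} \, \partial_x^{\l}(\Pi_d v - v)$, and a direct calculation of the weighted norm gives
\begin{equation*}
    \norm{(\partial_x^U)^{\l}(\Pi_d^U u - u)}_{\e^U g}^2 = \int_{\real} \abs{\partial_x^{\l}(\Pi_d v - v)}^2 \, g \, \d x = \norm{\partial_x^{\l}(\Pi_d v - v)}_g^2,
\end{equation*}
with the analogous identity $\norm{(\partial_x^U)^{\m} u}_{\e^U g} = \norm{\partial_x^{\m} v}_g$ for the right-hand side.

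Applying \cref{thm:hermite_polynomials_approximation} to $v$ then yields the claimed bound with the prefactor $\sqrt{(d-\m+1)!/(d-\l+1)!}$. Since the author flags this result as an immediate corollary, I do not anticipate any substantive obstacle: the only point requiring care is to justify the operator identity $(\partial_x^U)^{\l} = \e^{-U/2} \, \partial_x^{\l} \, \e^{U/2}$ on a sufficiently rich class of test functions (for instance $C^\infty_c(\real)$) and then extend by density, which is unproblematic given that $(\e^{-U/2} H_i)_{i=0}^{\infty}$ is an orthonormal basis of $\lp{2}{\real}[\e^U g]$.
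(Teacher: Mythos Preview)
Your proposal is correct and follows precisely the route the paper has in mind: the corollary is stated as ``immediate'' from \cref{thm:hermite_polynomials_approximation}, and the reduction via the isometry $u \mapsto \e^{U/2} u$ together with the conjugation identity $(\partial_x^U)^{\l} = \e^{-U/2}\,\partial_x^{\l}\,\e^{U/2}$ is exactly the intended argument.
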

In the case $\e^{-U} = g$ (orthonormal Hermite functions in $\lp{2}{\real}$),
one can prove a similar statement with the usual derivative instead of $\partial_x^U$ in the left-hand side,
see~\cite[Theorem 7.14]{shen2011spectral}.
We note that \cref{thm:hermite_polynomials_approximation,thm:hermite_functions_approximation} can be extended to the multi-dimensional case,
see e.g.~\cite{abdulle2017spectral}.

In addition to the function $\e^{-U/2}$ multiplying the Hermite polynomials in the definition of basis functions,
it is usual in numerical simulations to introduce a scaling factor,
which can be chosen appropriately depending on how localized the function to be approximated is.
We define $H^{\sigma}_i (x) := H_i(x/\sigma)$,
and note that these polynomials form an orthonormal basis in $\lp{2}{\real}[g_{\sigma}]$,
where $g_{\sigma}$ is the normal distribution with mean 0 and variance $\sigma^2$.
Although we do not present them explicitly,
approximation results similar to~\cref{thm:hermite_polynomials_approximation,thm:hermite_functions_approximation} can be proved
in the presence of this scaling factor,
with the only difference being the presence of additional constant factors on the right-hand side; see, for example,~\cite{abdulle2017spectral}.

\iflong
The importance of choosing this scaling factor in a suitable manner was demonstrated rigorously in~\cite{tang1993hermite},
where the author shows that,
for Gaussian-type functions,
the optimal value of $\sigma$ should depend on the number of basis function used with a dependence of the form $1/\sqrt{d}$.
This can be justified intuitively by taking into account that,
on the one hand,
the behavior of Hermite functions as $x \to \infty$ is well-understood,
with the final inflection point occurring at $x \propto \sqrt{d} \, \sigma$ and a rapid decrease to 0 beyond that point~\cite{MR2474337},
and that, on the other hand,
scaled Hermite functions are eigenfunctions of the Fourier transform operator,
up to dilations/contractions.
Since a contraction in real space results in a dilation in Fourier space,
by choosing a scaling factor that decreases with $d$
one effectively favors exploration in Fourier space.
In particular,
choosing $\sigma \propto 1/\sqrt{d}$ leads to a situation where the position of the final inflection point
of the Hermite function of highest order remains approximately constant
(while the position of the final inflection point of its Fourier transform grows linearly),
so it is crucial in that case to ensure that $\sqrt{d} \, \sigma$ is large enough
to ensure that the basis functions cover the support of the target function.
\fi

In practice,
calculating the Hermite transform numerically requires the introduction of a quadrature.
To bound the associated error,
results similar to \cref{thm:hermite_polynomials_approximation,thm:hermite_functions_approximation},
with the projection operators replaced by interpolation operators,
can be proved; see~\cite[Theorems 7.17, 7.18]{shen2011spectral}.

\section{Proof of \texorpdfstring{\cref{thm:convergence_as_d_goes_to_infinity}}{the convergence theorem}}%
\label{sec:proof_convergence_theorem}

Using the same notation as in
\iflong
\cref{sec:hermite_polynomials},
\else
\cite[Appendix A]{2019arXiv190405973G},
\fi
we let $\Pi_d$ be the $\lp{2}{\real}[e^{-V_q}]$ projection operator on $\poly(d)$
and $\hat \Pi_d := e^{-V_q/2} \, \Pi_d \, e^{V_q/2}$.
The solution $u_d$ of \cref{eq:galerkin_approximation_white_noise} satisfies
$\partial_t u_d = \hat \Pi_d \, \mathcal H_x \, \hat \Pi_d \, u_d =: \mathcal H_d \, u_d$.
Clearly,
the operator $\mathcal H_d$ is selfadjoint on $\e^{-V_q/2} \poly(d)$ with the $\lp{2}{\real}$ inner product,
and it is also negative, by negativity of $\mathcal H_x$:
\begin{equation}
    \label{eq:negativity_of_schrodinger_operator}
    \ip{\mathcal H_d w_d}{w_d} = \ip{\mathcal H_x w_d}{w_d} \leq 0 \quad \forall w_d \in \e^{-V_q/2} \poly(d).
\end{equation}
To prove the convergence of $u_d$ when $d \to \infty$,
we will rely on the following lemma.
\begin{lemma}
    % [A Sobolev-like inequality]
    \label{lemma:sobolev-like_inequality}
    Let $\hat \partial_x := \partial_x + x/2$,
    and assume that $\hat \partial_x^n u \in \lp{2}{\real}$ for $n = 0, \dots, \m$.
    Then for all natural numbers $\m_1, \m_2$ such that $\m_1 + \m_2 \leq \m$, it holds that $x^{\m_1} \, \derivative*{\m_2}[u]{x^\m_2} \in \lp{2}{\real}$ and
    \begin{align}
        \label{eq:sobolev-like_inequality_equation}
        K_1(\m) \, \max_{\m_1 + \m_2 \leq \m} \, \norm{x^{\m_1} \, \derivative*{\m_2}[u]{x^\m_2}} \leq \max_{0 \leq i \leq \m} \norm{\hat \partial_x^i u} \leq K_2(\m) \, \max_{\m_1 + \m_2 \leq \m} \, \norm{x^{\m_1} \, \derivative*{\m_2}[u]{x^\m_2}},
    \end{align}
    where $K_1(\m), K_2(\m)$ are positive constants depending only on $\m$ and $\norm{\cdot}$ is the usual $\lp{2}{\real}$ norm.
\end{lemma}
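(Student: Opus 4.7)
The plan is to exploit the algebraic fact that $\hat \partial_x = \partial_x + x/2$ is the annihilation operator of the one-dimensional quantum harmonic oscillator on $\lp{2}{\real}$. Let $\{\psi_n\}_{n \geq 0}$ denote the $\lp{2}{\real}$-orthonormal Hermite functions; then $\hat \partial_x \psi_0 = 0$ (since $\psi_0 \propto e^{-x^2/4}$) and, by induction on $n$, $\hat \partial_x \psi_n = \sqrt{n}\, \psi_{n-1}$ for all $n \geq 1$. Its formal $\lp{2}{\real}$ adjoint is $\hat \partial_x^* = -\partial_x + x/2$, which satisfies $\hat \partial_x^* \psi_n = \sqrt{n+1}\, \psi_{n+1}$ and obeys the canonical commutation relation $[\hat \partial_x, \hat \partial_x^*] = 1$.

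Expanding $u = \sum_n c_n \psi_n$ in $\lp{2}{\real}$ reduces both sides of~\cref{eq:sobolev-like_inequality_equation} to quantities in the Hermite coefficients $(c_n)$. A direct calculation yields
\begin{equation*}
    \norm{\hat \partial_x^i u}^2 = \sum_{n \geq i} \frac{n!}{(n-i)!}\, |c_n|^2,
\end{equation*}
which, combined with the elementary polynomial bounds $c_m(1+n)^m - C_m \leq n!/(n-m)! \leq (1+n)^m$ (with the convention that $n!/(n-m)! = 0$ for $n < m$) and the trivial inequality $\norm{u}^2 \leq \max_i \norm{\hat \partial_x^i u}^2$, implies that $\max_{0 \leq i \leq m} \norm{\hat \partial_x^i u}^2$ is comparable (up to constants depending only on $m$) to $\sum_n (1 + n)^m |c_n|^2$.

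To obtain the analogous equivalence for the right-hand side of~\cref{eq:sobolev-like_inequality_equation}, I would use $x = \hat \partial_x + \hat \partial_x^*$ and $\partial_x = (\hat \partial_x - \hat \partial_x^*)/2$ to expand any monomial $x^{m_1} \partial_x^{m_2}$ with $m_1 + m_2 \leq m$ as a finite linear combination $\sum_{p + q \leq m_1 + m_2} \gamma_{p,q}\, (\hat \partial_x^*)^p \hat \partial_x^q$, obtained by iterated application of $[\hat \partial_x, \hat \partial_x^*] = 1$. Each basic block satisfies
\begin{equation*}
    \norm{(\hat \partial_x^*)^p \hat \partial_x^q u}^2 = \sum_n \frac{n!}{(n-q)!}\, \frac{(n-q+p)!}{(n-q)!}\, |c_n|^2 \leq C_{p,q} \sum_n (1+n)^{p+q} |c_n|^2,
\end{equation*}
so $\norm{x^{m_1} \partial_x^{m_2} u}^2 \leq C_m \sum_n (1 + n)^m |c_n|^2$, which together with the preceding paragraph yields the left inequality of~\cref{eq:sobolev-like_inequality_equation}. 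For the reverse, a straightforward induction on $i$ using $\hat \partial_x = \partial_x + x/2$ together with $[\partial_x, x] = 1$ shows that $\hat \partial_x^i$ can be written as a polynomial in $x$ and $\partial_x$ of total degree at most $i$, so $\norm{\hat \partial_x^i u} \leq C_i \max_{m_1 + m_2 \leq i} \norm{x^{m_1} \partial_x^{m_2} u}$, giving the right inequality.

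The main technical subtlety I anticipate is the justification of these Hermite-basis manipulations under the sole hypothesis that $\hat \partial_x^n u \in \lp{2}{\real}$ for $n \leq m$: \emph{a priori}, the objects $x^{m_1} \partial_x^{m_2} u$ and $\hat \partial_x^i u$ must be interpreted distributionally, and the Hermite identities, which are immediate on Schwartz functions, must be extended by density. This should be routine: truncate the expansion of $u$ to $N$ terms, establish all the estimates for the truncation with constants independent of $N$, and pass to the limit using that the hypothesis forces the tail $\sum_{n > N}(1+n)^m |c_n|^2$ to vanish as $N \to \infty$.
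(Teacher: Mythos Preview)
Your proof is correct and takes a genuinely different route from the paper. The paper's argument is real-analytic: it invokes the conjugation identity $\hat\partial_x u = e^{-x^2/4}\,\partial_x(e^{x^2/4}u)$ to translate the hypothesis into membership of $e^{x^2/4}u$ in the weighted Sobolev space $\sobolev{\m}{\real}[e^{-x^2/2}]$, then cites an external inequality $\norm{xv}[e^{-x^2/2}] \leq 4\,\norm{v}[1,e^{-x^2/2}]$ from~\cite{shen2011spectral}, iterates it to control $\norm{x^{\m_1}u}$, and finishes the left inequality by induction on $\m_2$ via the triangle inequality; the right inequality is dispatched (as you do) by expanding $\hat\partial_x^i$. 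Your approach instead works entirely in the Hermite basis, using the ladder-operator algebra and Wick (normal) ordering to reduce both sides to the single sequence norm $\sum_n (1+n)^{\m}|c_n|^2$. This is more self-contained---it avoids the external Sobolev lemma---and fits naturally with the spectral setting of the paper; it also yields an explicit intermediate characterization of the norm equivalence class. The paper's route, on the other hand, is somewhat more robust to perturbations of the weight (it would adapt more readily if the Gaussian were replaced by another weight for which a Sobolev-type embedding is known), whereas your argument leans on the exact algebraic structure of the harmonic oscillator. Your closing remark about the density argument is apt; the paper's proof implicitly assumes enough smoothness for the integrations by parts and does not spell this step out either.
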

\begin{proof}
    We denote by $\sobolev{\m}{\real}[\e^{-x^2/2}]$ the Sobolev space weighted by $\e^{-x^2/2}$,
    \[
        \sobolev{\m}{\real}[\e^{-x^2/2}] = \{v : \derivative*{i}[v]{x} \in \lp{2}{\real}[\e^{-x^2/2}] \text{ for } i = 0, \dotsc, \m \},
    \]
    and by $\norm{\dummy}[\m,\e^{-x^2/2}]$ the associated norm: $\norm{v}[\m,\e^{-x^2/2}]^2 = \sum_{i=0}^{\m}\norm{\derivative*{i}[v]{x}}[\e^{-x^2/2}]^2$.
    For the first inequality, we know from~\cite[Lemma B.6]{shen2011spectral} that
    \begin{equation*}
        \norm{x v}[\e^{-x^2/2}] \leq 4 \, \norm{v}[1,\e^{-x^2/2}] \qquad  \forall v \in \sobolev{1}{\real}[\e^{-x^2/2}].
    \end{equation*}
    Applying this inequality repeatedly,
    we obtain
    \begin{align}
        \label{eq:lemma_auxiliary_inequality}
        \norm{x^{\m_1} v}[\e^{-x^2/2}] \leq C(\m) \, \norm{v}[\m,\e^{-x^2/2}], \qquad  \m_1 = 0, \dotsc,  \m, \qquad \forall v \in \sobolev{\m}{\real}[\e^{-x^2/2}],
    \end{align}
    for a constant $C(\m)$ depending only on $\m$.
    By definition, $\hat \partial_x u = \e^{-x^2/4}\partial_x (\e^{x^2/4} u)$,
    so the assumption implies that $\e^{x^2/4} u \in \sobolev{\m}{\real}[\e^{-x^2/2}]$,
    from which we obtain using \cref{eq:lemma_auxiliary_inequality} that, for $0 \leq \m_1 \leq \m$,
    \begin{equation*}
        \norm{x^{\m_1} u} = \norm{x^{\m_1} u \, \e^{x^2/4}}[\e^{-x^2/2}]\leq C(\m) \, \norm{u \, \e^{x^2/4}}[\m][\e^{-x^2/2}] = C(\m) \sqrt{\sum_{i=0}^{\m} \norm{\hat \partial_x^\m u}^2}.
    \end{equation*}
    This proves the first inequality of \cref{eq:sobolev-like_inequality_equation} in the case $\m_2 = 0$.
    We assume now that the statement is proved up to $\m_2 - 1$,
    and we show that it is valid for $\m_2$.
    Using the triangle inequality we obtain
    \begin{align*}
        \label{eq:Sobolev-like_inequality}
        \norm{x^{\m_1} \, \derivative*{\m_2}[u]{x^\m_2}} \leq \norm{x^{\m_1} \, (\derivative*{\m_2}[u]{x^\m_2} - \hat \partial_x^{\m_2}u) } + \norm{x^{\m_1} \, \hat \partial_x^{\m_2}u}.
    \end{align*}
    The derivatives in the first term are of order strictly lower than $\m_2$,
    and therefore this term can be bounded by the induction assumption.
    The second term is bounded by applying the base case to $\hat \partial_x^{\m_2} u$:
    introducing $v := \hat \partial_x^{\m_2} u$, we notice that $\hat \partial_x^{m - m_2} v = \hat \partial_x^{m} u\in \lp{2}{\real}$ by assumption,
    so we can apply the first inequality in \cref{eq:sobolev-like_inequality_equation},
    without any derivative of $v$ in the left-hand side, to deduce
    \[
        \norm{x^{m-m_2} v} \leq \max_{0 \leq i \leq m - m_2} \norm{\hat \partial_x^i v} \leq \max_{0 \leq i \leq m} \norm{\hat \partial_x^i u}.
    \]
    The second inequality in~\eqref{eq:sobolev-like_inequality_equation} then holds trivially by expanding $\hat \partial_x$ and applying a triangle inequality.
\end{proof}

With \cref{assumption:potential},
we can show that the two norms in \cref{lemma:sobolev-like_inequality} can be bounded from above
by the norm $\sqrt{\ip{{(- \mathcal H_x + 1)}^\m u}{u}}$ for appropriate $\m$.
\begin{lemma}
    [Bound by alternative norm]
    \label{lemma:bound_alternative_norm}
    If \cref{assumption:potential} holds, then
    \begin{equation*}
        \label{eq:bound_alternative_norm}
        \sum_{i=0}^{\m} {\norm{\hat \partial_x^\m u}}^2 \leq C \, \ip{(-\mathcal H_x + 1)^\m u}{u}
    \end{equation*}
    for any smooth $u$ for which the right-hand side is well-defined.
    Here $C$ is a positive constant that depends on $\beta$, $\m$,
    and on the particular expression of the potential $W$ defined in \cref{assumption:potential}.
\end{lemma}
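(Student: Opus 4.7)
The plan is to combine \cref{lemma:sobolev-like_inequality}, which controls each $\|\hat\partial_x^i u\|^2$ by the sum of $\|x^{m_1}\partial_x^{m_2} u\|^2$ with $m_1+m_2 \leq i$, with an induction on $m$ to establish the stronger estimate
\begin{equation*}
    \sum_{m_1+m_2\leq m} \|x^{m_1}\partial_x^{m_2} u\|^2 \leq C \, \ip{(-\mathcal H_x+1)^m u}{u},
\end{equation*}
from which the lemma follows immediately. Writing $A := -\mathcal H_x+1 = -\beta^{-1}\partial_x^2 + (W+1)$, I would exploit that $A$ is essentially selfadjoint on $C^\infty_c(\real)$ and, since $\mathcal H_x$ has nonpositive eigenvalues, satisfies $A \geq 1$ in the operator sense, so that in particular $\ip{A^m u}{u} \geq \|u\|^2$ for every $m$.

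For the base case $m=1$, I would expand $\ip{Au}{u} = \beta^{-1}\|u'\|^2 + \int_{\real}(W+1) u^2$ via integration by parts and use the lower bound in \cref{assumption:potential}, namely $W+1 \geq C_1(1+x^2) + (1-C_2)$, to get $\ip{Au}{u} \geq c(\|u'\|^2 + \|xu\|^2) - C \|u\|^2$. The residual $\|u\|^2$ is then absorbed using the trivial bound $\ip{Au}{u} \geq \|u\|^2$, yielding $\|u\|^2 + \|u'\|^2 + \|xu\|^2 \leq C \ip{Au}{u}$, and an application of \cref{lemma:sobolev-like_inequality} completes this case.

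For the inductive step from $m-2$ to $m$ (with $m \geq 2$), I would use the selfadjointness identity $\ip{A^m u}{u} = \ip{A^{m-2}(Au)}{Au}$ and apply the induction hypothesis to $v := Au$, obtaining
\begin{equation*}
    \sum_{m_1+m_2\leq m-2} \|x^{m_1}\partial_x^{m_2}(Au)\|^2 \leq C\, \ip{A^{m-2}(Au)}{Au} = C\, \ip{A^m u}{u}.
\end{equation*}
Since $Au = -\beta^{-1} u'' + (W+1)u$, for each pair $(m_1,m_2)$ with $m_1+m_2\leq m-2$ I would solve for the top-order term $x^{m_1} u^{(m_2+2)}$ in terms of $x^{m_1}\partial_x^{m_2}(Au)$ minus a sum of cross terms $x^{m_1}(W+1)^{(j)} u^{(m_2-j)}$, for $0 \leq j \leq m_2$. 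The polynomial upper bound $|(W+1)^{(j)}| \leq C(1+|x|^{2k-j})$ from \cref{assumption:potential} reduces these cross terms to expressions of the form $\|x^{a} u^{(b)}\|$ with $a+b$ strictly less than $m$, which are already bounded by previous levels of the induction, producing the desired estimate for $\|x^{m_1}u^{(m_2+2)}\|$ and, by combination with the previously bounded pure-$x$ terms, for all $(m_1',m_2')$ with $m_1'+m_2'\leq m$.

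The main technical obstacle will be the careful bookkeeping of polynomial degrees when expanding the cross terms, together with the need to close the induction simultaneously in both $m_1$ and $m_2$; a secondary induction on the derivative order $m_2$ may be necessary so that whenever the cross-term argument invokes an estimate of the form $\|x^{a} u^{(b)}\|$ with $a+b \leq m$ but $b < m_2+2$, it can be supplied by a case at the same level that was established earlier in the secondary induction. An alternative route, which avoids the combinatorial bookkeeping, would be to introduce the $A^{1/2}$-weighted norms via functional calculus and derive the operator inequalities $X^2 \leq C A$ and $D^2 \leq C A$ from the base case, after which commutator estimates such as $[A, X] = -2\beta^{-1} D$ and $[A, \partial_x] = -W'$ would be iterated to extract higher-order bounds.
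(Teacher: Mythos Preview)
Your inductive argument does not close when $k > 1$, which is precisely the case of interest (for the bistable potential $V(x) = x^4/4 - x^2/2$ one has $\deg W = 6$, so $k = 3$). The problem lies in the cross terms: after applying the induction hypothesis at level $m-2$ to $v = Au$, you want to isolate $x^{m_1} u^{(m_2+2)}$ from $x^{m_1}\partial_x^{m_2}(Au)$, and the $j=0$ contribution you must move to the right-hand side is $x^{m_1}(W+1)\,u^{(m_2)}$. Since $|W+1| \leq C(1+|x|^{2k})$, this produces a term of the form $\|x^{m_1+2k} u^{(m_2)}\|$, with total index $m_1 + m_2 + 2k \leq (m-2) + 2k$. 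For $k > 1$ this exceeds $m$, so it is not supplied by any estimate you already have --- neither by the level-$(m-2)$ hypothesis nor by a secondary induction on $m_2$ at level $m$. The same obstruction appears in your alternative route: $[A,\partial_x] = -W'$ has degree $2k-1$, and iterating commutators again inflates the $x$-degree beyond what the induction can absorb.

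The paper avoids this by exploiting not the absolute size of $W^{(i)}$ but its size \emph{relative} to $W$: because $W$ is a polynomial, one can choose $K$ large enough that $|W^{(i)}(x)| \leq \varepsilon\,(W(x)+K)$ for all $x$ and all $i \leq m$. Rather than inducting, one expands $\ip{(-\mathcal H_x + K)^m u}{u}$ directly by integration by parts; the leading terms are $\sum_i \binom{m}{i}\beta^{-i}\int (W+K)^{i} |u^{(m-i)}|^2$, and every commutator correction carries a factor $W^{(j)}$ which, by the relative bound with $\varepsilon$ small, can be absorbed into the leading sum. Only after this absorption is the lower bound $W+K \geq C_1(1+x^2)$ invoked to extract $\|x^{m_1} u^{(m_2)}\|^2$ for $m_1+m_2 \leq m$. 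The essential mechanism your proposal is missing is this treatment of derivatives of $W$ as perturbations of $W+K$, rather than as fixed powers of $x$.
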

\begin{proof}
Below $C_1$ and $C_2$ denote the same constants as in \cref{assumption:potential}.
First we notice that, for any constant $K > 1$,
\begin{equation}
    \label{eq:proof_auxiliary}
    \ip{(-\mathcal H_x + K)^\m u}{u} \leq K^\m \ip{(-\mathcal H_x + 1)^\m u}{u},
\end{equation}
because $\mathcal H_x$ is a negative operator.
Since $W$ is a polynomial,
its derivatives grow asymptotically more slowly that $W$  itself,
and so it is possible for any $\varepsilon > 0$ to find $K \geq C_2$ large enough that
\begin{equation}
    \label{eq:proof:bound_derivative_of_W}
    \abs{\derivative*{i}[W]{x^i}(x)} \leq \varepsilon \, (W(x) + K) \qquad \forall x \in \real, \qquad i = 1, 2, \dotsc.
\end{equation}
\revision{%
    For this proof to go through, it is in fact sufficient that this inequality be satisfied for $i = 1, \dotsc, m$.
}
We decompose $-\mathcal H_x + K$ as $(- \beta^{-1} \derivative{2}{x^2}) + (W(x) + K)$.
The two operators in this sum are positive because $K \geq C_2$ and by assumption $W(x) + C_2 \geq C_1 (1 + \abs{x}^2)$.
Expanding the inner product in the left-hand side of \cref{eq:proof_auxiliary} and using integration by parts,
\begin{align}
    \label{eq:proof:expansion_sum_of_operators}
    \ip{(-\mathcal H_x + K)^\m u}{u} = \left(\sum_{i=0}^{\m} \beta^{-i} \binom{m}{i} \, \int_{\real}{(W(x) + K)}^{i} \, {(\derivative*{\m-i}[u]{x}(x))}^2 \, \d x\right) + \dotsb,
\end{align}
where the remainder terms originate from the fact that
the operators $\derivative{1}{x}$ and $(W(x) + K)$ do not commute.
By \cref{eq:proof:bound_derivative_of_W},
these terms can be bounded for sufficiently large $K$ by half the leading term in \cref{eq:proof:expansion_sum_of_operators}.
To conclude,
we further expand this leading term:
\begin{align*}
    \ip{(-\mathcal H_x + K)^\m u}{u} &\geq \frac{1}{2} \sum_{i=0}^{\m} \binom{m}{i} \beta^{-i}\,\int_{\real}{(W(x) + K)}^{i} \, {(\derivative*{\m-i}[u]{x}(x))}^2 \, \d x \\
                                     &\geq \frac{1}{2} \sum_{i=0}^{\m} \binom{m}{i} C_1^i \, \beta^{-i}\,\int_{\real} \, {(1 +  x^2)}^{i} \, {(\derivative*{\m-i}[u]{x}(x))}^2 \, \d x \\
                                     &\geq \frac{1}{2} \sum_{i=0}^{\m}\sum_{j=0}^{i} \binom{m}{i} \binom{i}{j} \, C_1^i \, \beta^{-i} \, \, \int_{\real}  x^{2j} \, {(\derivative*{\m-i}[u]{x}(x))}^2 \,\d x \\
                                      &\geq C(\m, \beta, C_1) \,\sum_{\m_1 + \m_2 \leq \m} \norm{x^{\m_1} \, \derivative*{\m_2}[u]{x}}^2,
\end{align*}
from which \cref{lemma:sobolev-like_inequality} allows us to conclude.
\end{proof}
\begin{proof}[Proof of \cref{thm:convergence_as_d_goes_to_infinity}]
We assume for simplicity that $\sigma = 1$,
and we begin by splitting the error as $u_d - u = (u_d - \hat \Pi_d u) + (\hat \Pi_d u - u) =: e_d + \delta_d$.
The first term is related to the so-called consistency error,
and the second to the approximation error.
We obtain from~\cref{eq:schrodinger_equation,eq:galerkin_approximation_white_noise}
\begin{align*}
    \textstyle \derivative{1}[e_d]{t} =  \hat \Pi_d \mathcal H_x \hat \Pi_d e_d + (\hat \Pi_d \mathcal H_x \hat \Pi_d - \hat \Pi_d \mathcal H_x) u.
\end{align*}
Taking the inner product with $e_d$ and using~\eqref{eq:negativity_of_schrodinger_operator},
this implies
\begin{align*}
    \textstyle \ip{\derivative{1}[e_d]{t}}{e_d}
    &\leq \ip{\mathcal H_x (\hat \Pi_d \,u - u)}{e_d} \\
    % \label{eq:numerical:error_hermite}%
    &\leq \frac{\alpha}{2} \ip{e_d}{e_d} + \frac{1}{2 \alpha} \ip{\mathcal H_x^2 (u - \hat \Pi_d \, u)}{(u - \hat \Pi_d \, u)} \qquad \forall \alpha > 0,
\end{align*}
where we used Young's inequality.
We see from this equation that $e_d$ can be controlled if one can bound the second inner product on the right-hand side.
For this we use arguments similar to the ones employed in~\cite{abdulle2017spectral,gagelman2012spectral}.
Since $\mathcal H_x$ is negative and selfadjoint,
we notice
\begin{align}
    \ip{(-\mathcal H_x)^i u(t)}{u(t)}
    &= \ip{(-\mathcal H_x)^i u_0}{u_0} + \int_{0}^t \derivative*{1}{s} \ip{(-\mathcal H_x)^i u(s)}{u(s)} \, \d s \nonumber \\
    &= \ip{(-\mathcal H_x)^i u_0}{u_0} - 2\int_{0}^t \, \ip{(-\mathcal H_x)^{i+1}u(s)}{u(s)} \, \d s \nonumber \\
    \label{eq:proof:energy_estimate_linear}
    &\leq \ip{(-\mathcal H_x)^i u_0}{u_0},
\end{align}
for $i = 1, 2, \dots$,
which implies that the inner products $\ip{(-\mathcal H_x)^i u}{u}$ remain bounded for all positive times.
We can now apply
\iflong
\cref{thm:hermite_functions_approximation}
\else
\cite[Corollary A.2]{2019arXiv190405973G}
\fi
to obtain,
using \cref{lemma:sobolev-like_inequality,lemma:bound_alternative_norm} and \cref{assumption:potential},
\begin{align*}
\ip{\mathcal H_x^2 (u - \hat \Pi_d \, u)}{(u - \hat \Pi_d \, u)}
&\leq C \, \sum_{i=0}^{2\,k} \, \norm{\hat \partial_x^i (u - \hat \Pi_d \, u)}^2 \\
&\leq C \, {\frac{(d-\m+1)!}{(d-2k+1)!}} \, \norm{\hat \partial_x^\m u}^2 \\
&\leq C \, {\frac{(d-\m+1)!}{(d-2k+1)!}} \, \ip{(-\mathcal H_x + 1)^\m u}{u} \\
&\leq C \, {\frac{(d-\m+1)!}{(d-2k+1)!}} \, \ip{(-\mathcal H_x + 1)^\m u_0}{u_0}.
\end{align*}
We note that when $V$ is quadratic, $k = 1$ is a valid choice in \cref{assumption:potential},
and the bound above can be obtained by simply expanding $u$ in terms of the eigenfunctions of $\mathcal H_x$,
which in that case are just rescaled Hermite functions.
Using Grönwall's inequality,
we finally obtain
\begin{align}
    \nonumber%
    \norm{e_d(t)}^2
    &\leq \e^{\alpha t} \, \norm{e_d(0)}^2 + \int_{0}^{t}
    \e^{\alpha(t - s)} \, \ip{\mathcal H_x^2 (u - \hat \Pi_d \, u)}{(u - \hat \Pi_d \, u)} \d s, \\
    \label{eq:proof:bound_ed}%
    &\leq \e^{\alpha t} \, \left( \norm{e_d(0)}^2 + C_{\alpha} \, {\frac{(d-\m+1)!}{(d-2k+1)!}} \right).
\end{align}
The first term, proportional to $\norm{e_d(0)}^2$, depends only on
the interpolation error of the initial condition,
which is nonzero when using a Gauss--Hermite quadrature.
It was proved that this error term also decreases spectrally,
see e.g.~\cite[Theorems 7.17, 7.18]{shen2011spectral},
and in our case faster than the second error term.
For the approximation error $\delta_d$,
similar inequalities to the ones used above can be used to obtain a bound of the type~\eqref{eq:proof:bound_ed},
which leads to the conclusion.
\end{proof}

\begin{remark}
\label{remark:stationary_error}
\revision{
    As mentioned in \cref{remark:not_optimal},
    \cref{thm:convergence_as_d_goes_to_infinity} is not optimal.
    It leaves open, in particular, the question of precisely how the error behaves as $t \to \infty$.
    In this remark, we give a partial answer to the question:
    we show how a bound on the stationary error can be obtained,
    under the assumption that the solution to the discretized equation is rescaled in time in such a way that
    the integral of $\rho_d := \sqrt{\rho_s} \, u_d$ remains equal to $1$.
    % In this case, comparing the stationary solutions amounts to comparing the eigenfunctions of $\mathcal H_x$ and $\hat \Pi_d \mathcal H_x \hat \Pi_d$,
    % which we denote by $$ and $$,
    % associated with the largest (sign included) eigenvalue.
    With this rescaling and with the notation $S_d := \hat \Pi_d (L^2(\real))$,
    the stationary solution of the discretized (in space) equation is
    \[
        \hat u_s :=  \argmax_{\psi_d \in S_d, \int_{\real} \sqrt{\rho_s} \, \psi_d  = 1}  \ip{\mathcal H_x \psi_d}{\psi_d}.
    \]
    Taking
    \[
        \psi_d = \frac{\hat \Pi \sqrt{\rho_s}}{\int_{\real} \sqrt{\rho_s} \hat \Pi \sqrt{\rho_s}},
    \]
    we deduce
    \begin{align*}
        - \ip{\mathcal H_x \hat u_s}{\hat u_s} \leq \,
        & - \left| \frac{1}{\int_{\real} \sqrt{\rho_s} \hat \Pi \sqrt{\rho_s}} \right|^2 \ip{\mathcal H_x (\hat \Pi \sqrt{\rho_s})}{\hat \Pi \sqrt{\rho_s}}.
    \end{align*}
    Since $\mathcal H_x$ is self adjoint in $\lp{2}{\real}$ and $\mathcal H_x \sqrt{\rho_s} = 0$,
    it follows that
    \begin{align*}
        - \ip{\mathcal H_x (\hat u_s - \sqrt{\rho_s})}{\hat u_s - \sqrt{\rho_s}} \leq \,
        & - \left| \frac{1}{\int_{\real} \sqrt{\rho_s} \hat \Pi \sqrt{\rho_s}} \right|^2 \ip{\mathcal H_x (\hat \Pi \sqrt{\rho_s}  - \sqrt{\rho_s})}{\hat \Pi \sqrt{\rho_s}  - \sqrt{\rho_s}}.
    \end{align*}
    An approximation argument similar to the one above can be employed to show that the right-hand side,
    and therefore also the left-hand side,
    decrease to zero as $d \to \infty$ faster than $d^{-n}$ for any $n > 0$.
    To conclude, a Poincaré-type inequality can be invoked,
    which is justified because
    \[
        \int_{\real} (\hat u_s - \sqrt{\rho_s}) \, \sqrt{\rho_s} = 1 - 1 = 0,
    \]
    to obtain a bound of the type
    \[
        \norm{\hat u_s - \sqrt{\rho_s}} \leq  - C \ip{\mathcal H_x (\hat u_s - \sqrt{\rho_s})}{\hat u_s - \sqrt{\rho_s}}.
    \]
    % In future work, we hope to find a refined bound on the time-dependent error $\|u(t) - u_d(t)\|$,
    % a bound that tends to a finite constant in the limit as $t \to \infty$.
}
\end{remark}

\section{Benchmark tests for the spectral numerical method}%
\label{sec:numerical_tests}
In this section,
we investigate the performance of the spectral method through numerical experiments.

\subsection{Linear Fokker--Planck equation with colored noise}%
We focus first on the Galerkin approximation~\eqref{eq:galerkin_approximation_colored_noise}.
Here we consider only the cases
where $V(\cdot)$ is a quadratic or a bistable potential
and where the noise is described by an OU process,
but results of additional numerical experiments,
corresponding to harmonic noise and non-Gaussian noise,
are presented in~\cite{thesis_urbain}.
\begin{figure}[ht]
    \centering%
    \begin{minipage}[b]{\linewidth}
        \centering%
        \includegraphics[width=.43\linewidth]{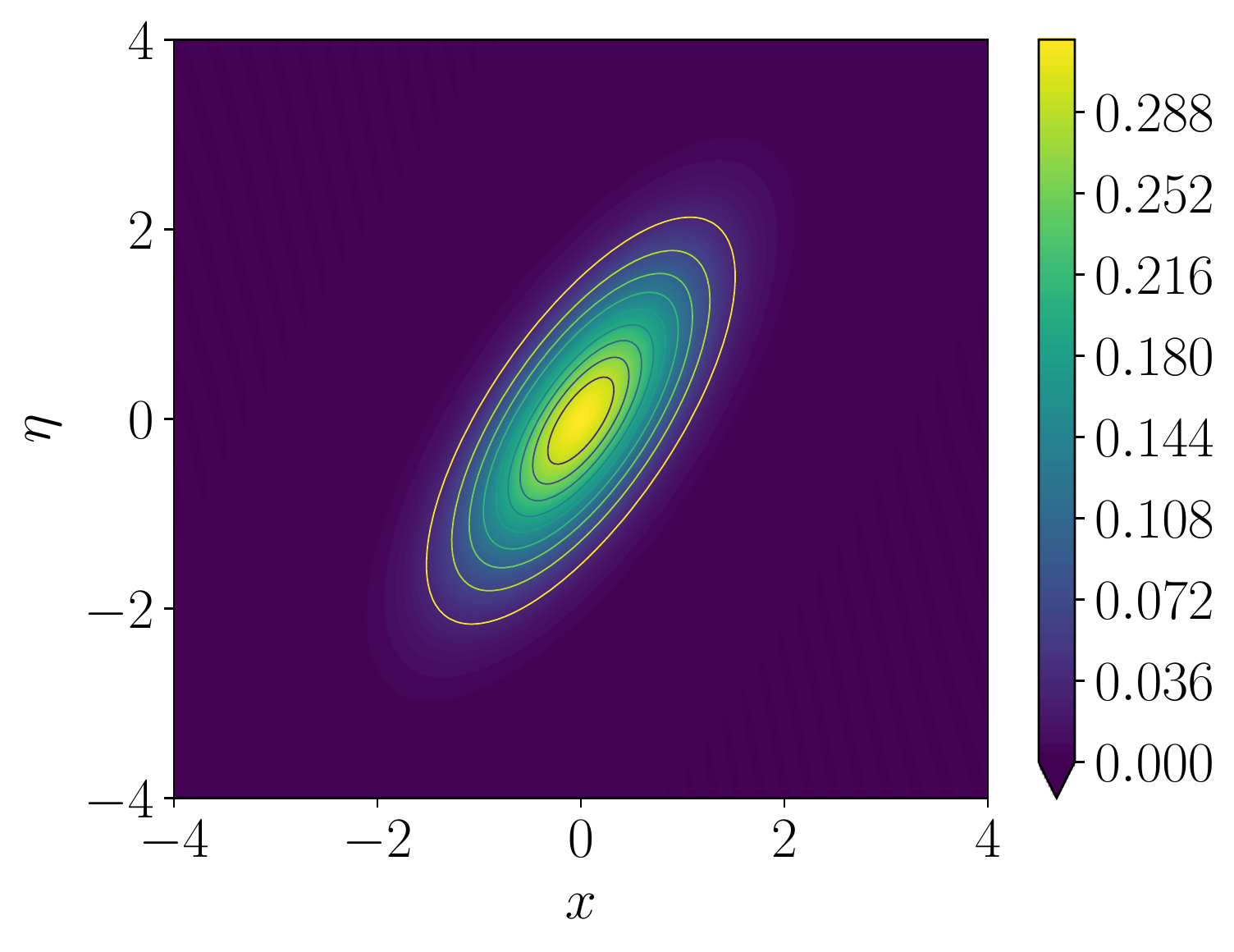}%
        \includegraphics[width=.42\linewidth]{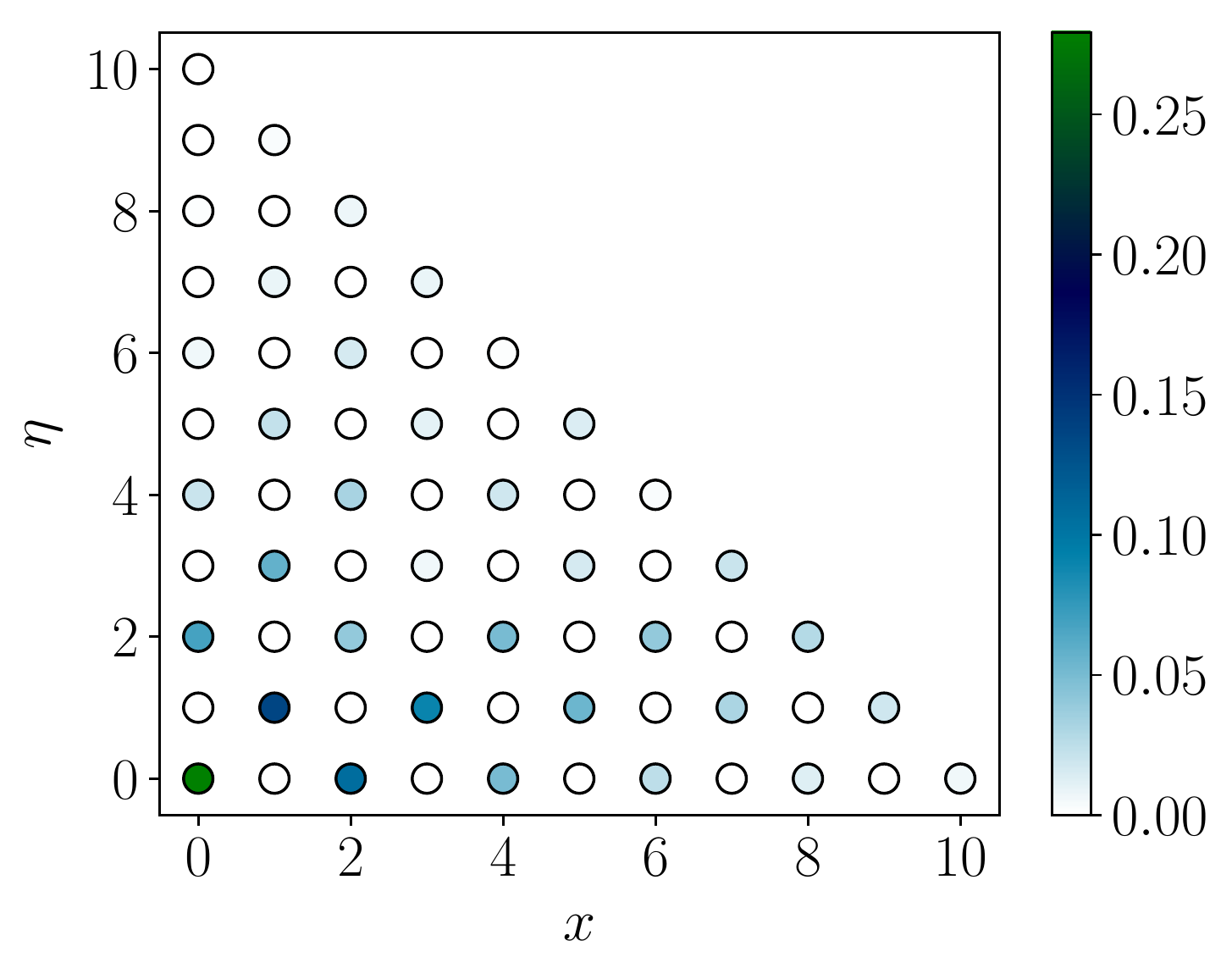}%
        \subcaption{%
            Steady-state solution of the Fokker--Planck equation \eqref{eq:galerkin_approximation_colored_noise} with the associated field lines of the probability flux (left)
            and absolute value of the coefficients of degree less than equal to $10$ in the Hermite expansion (right).
        }%
        \label{fig:solution_density_gaussian_case}
    \end{minipage}
    \begin{minipage}[b]{\linewidth}
        \centering%
        \includegraphics[width=.42\linewidth]{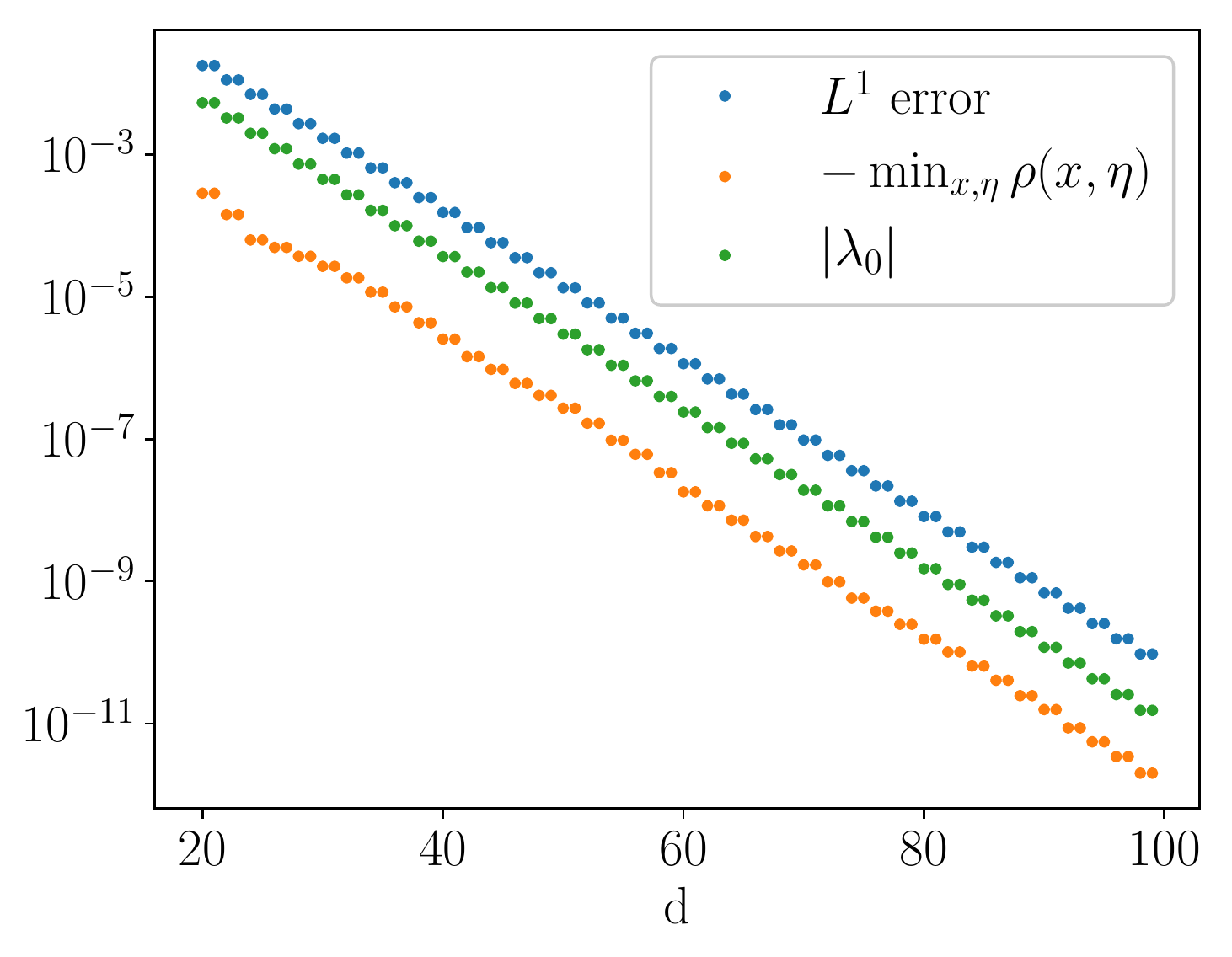}%
        \subcaption{%
            Convergence of the method, using three different metrics for the error:
            the $L^1$ norm of the error between the numerical and exact solutions,
            the negative of the minimum of the numerical solution,
            and the absolute value of the eigenvalue with smallest real part.
        }%
        \label{fig:convergence_gaussian_case}
    \end{minipage}%

    \caption{%
        Simulation data when $V(x) = x^2/2$.
    }%
\end{figure}

We start with the case $V(x) = x^2/2$,
for which the exact solution to the Fokker--Planck equation~\eqref{eq:fokker_planck_colored_noise} can be calculated explicitly by substitution of a Gaussian ansatz,
see~\cite[Section 3.7]{pavliotis2011applied}.
We study the convergence of the steady-state solution,
obtained by calculating the eigenfunction associated with the eigenvalue of lowest magnitude of $\hat \Pi_d \, \mathcal L_{\varepsilon}^* \, \hat \Pi_d$,
where $\hat \Pi_d$ is the $\lp{2}{\real^2}[\e^U]$ projection operator on $S_d$,
directly using the method \verb?eigs? from the \emph{SciPy} toolbox.
The parameters used for this simulation are the following:
$\beta = \varepsilon = 1$, $\sigma_x^2 = \frac{1}{10}$, $\sigma_{\eta}^2 = 1$, $\e^{-U(x, \eta)/2} = \e^{-V_{\eta}(\eta)/2} = \e^{-\eta^2/4}$.
With these parameters,
the steady-state solution to \cref{eq:fokker_planck_colored_noise} is equal to $\rho_{\infty}(x, \eta) = \e^{-2x^2 + 2x\eta - \eta^2} / \pi$,
and clearly $\rho_{\infty} \in \lp{2}{\real^2}[\e^U]$.
\Cref{fig:solution_density_gaussian_case}
presents the steady-state solution,
obtained using the spectral method with Hermite polynomials up to degree 100 ($d = 100$) and a triangular index set,
and \cref{fig:convergence_gaussian_case} presents the convergence of the method.
Since the solution satisfies $\rho_{\infty}(x, \eta) = \rho_{\infty}(- x, -\eta)$,
the Hermite coefficients corresponding to even values of $i + j$ are zero,
where $i$ and $j$ are the indices in the $x$ and $\eta$ directions, respectively.

Now we consider that $V$ is the bistable potential $x^4/4 - x^2/2$,
which was solved numerically in~\cite{hanggi1994colored} using generalized Hermite functions and a variation of the matrix continued fraction technique.
For this case an explicit analytical solution is not available.
The parameters we use are the following:
$\beta = 1$, $\varepsilon = \frac{1}{2}$, $\sigma_x^2 = \frac{1}{20}$, $\sigma_{\eta}^2 = 1$.
Through numerical exploration,
we noticed that a good convergence could be obtained by using the
multiplier function $\e^{-U(x,\eta)/2} = \e^{- \beta V(x)/2 - \eta^2/4}$,
rather than just $\e^{-\eta^2/4}$ in the previous paragraph.
We note that this would have been the natural choice
if the noise in the $x$ direction had been white noise.
The solution obtained using a square-shaped index set and $d = 100$,
as well as the corresponding Hermite coefficients up to degree 10,
is illustrated in~\cref{fig:solution_density_bistable_case}.
We observe that the Hermite coefficients corresponding to the degree 0 in the $\eta$ direction
(i.e.\ to the basis function $\e^{-\eta^2/2}$)
are significantly larger than the other coefficients,
which is consistent with the fact that, as $\varepsilon \to 0$,
the steady-state solution approaches $\e^{-\beta V(x)} \, \e^{-\eta^2/2}$ (up to a constant factor).
The associated convergence curves are presented in \cref{fig:bistable_potential_convergence_curves}.
\revision{
    We observe that the convergence is exponential,
    which is better than the rate of convergence predicted in~\cref{thm:convergence_as_d_goes_to_infinity}.
}
\begin{figure}[ht]
    \begin{minipage}[b]{\linewidth}
        \centering%
        \includegraphics[width=.42\linewidth]{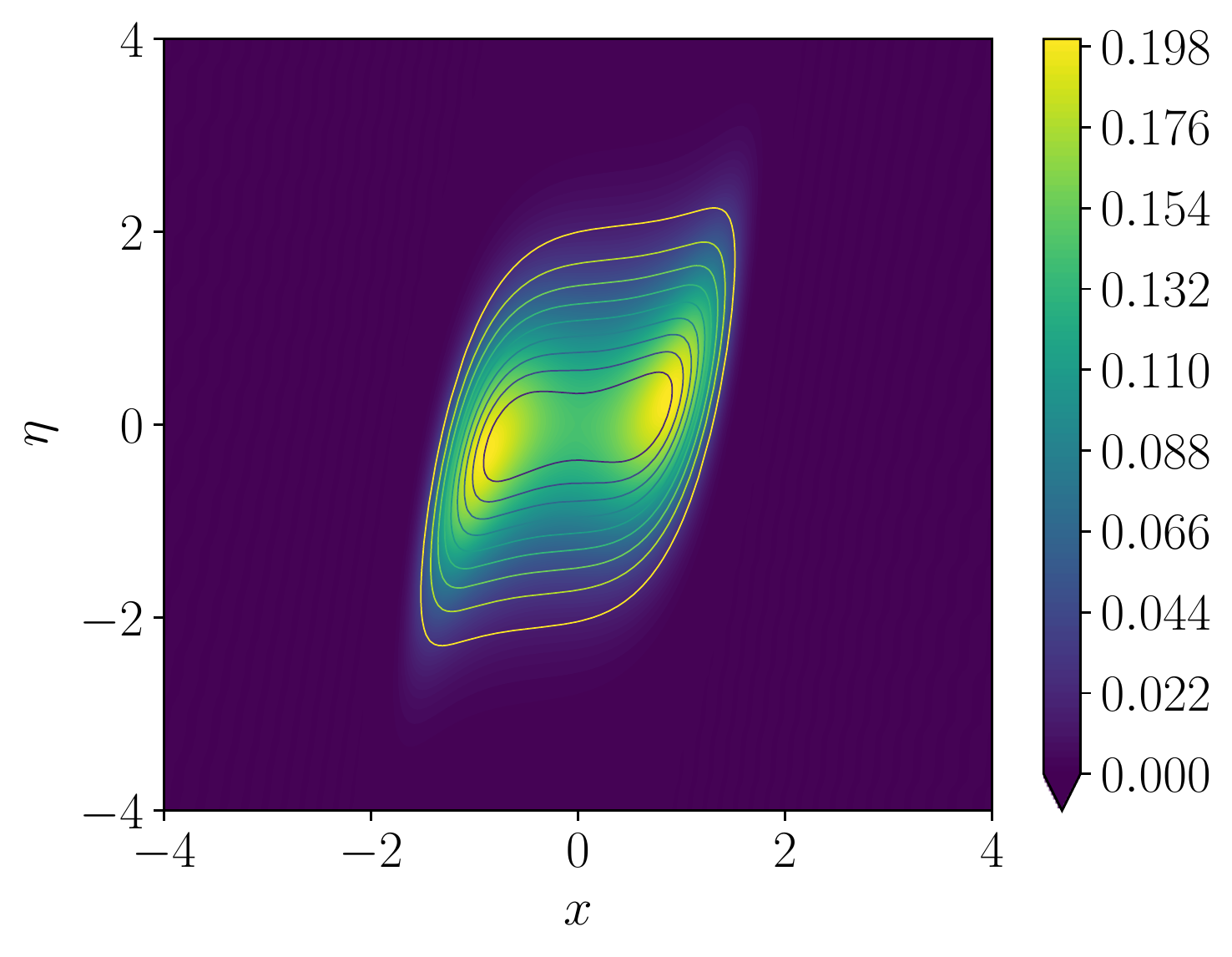}
        \includegraphics[width=.42\linewidth]{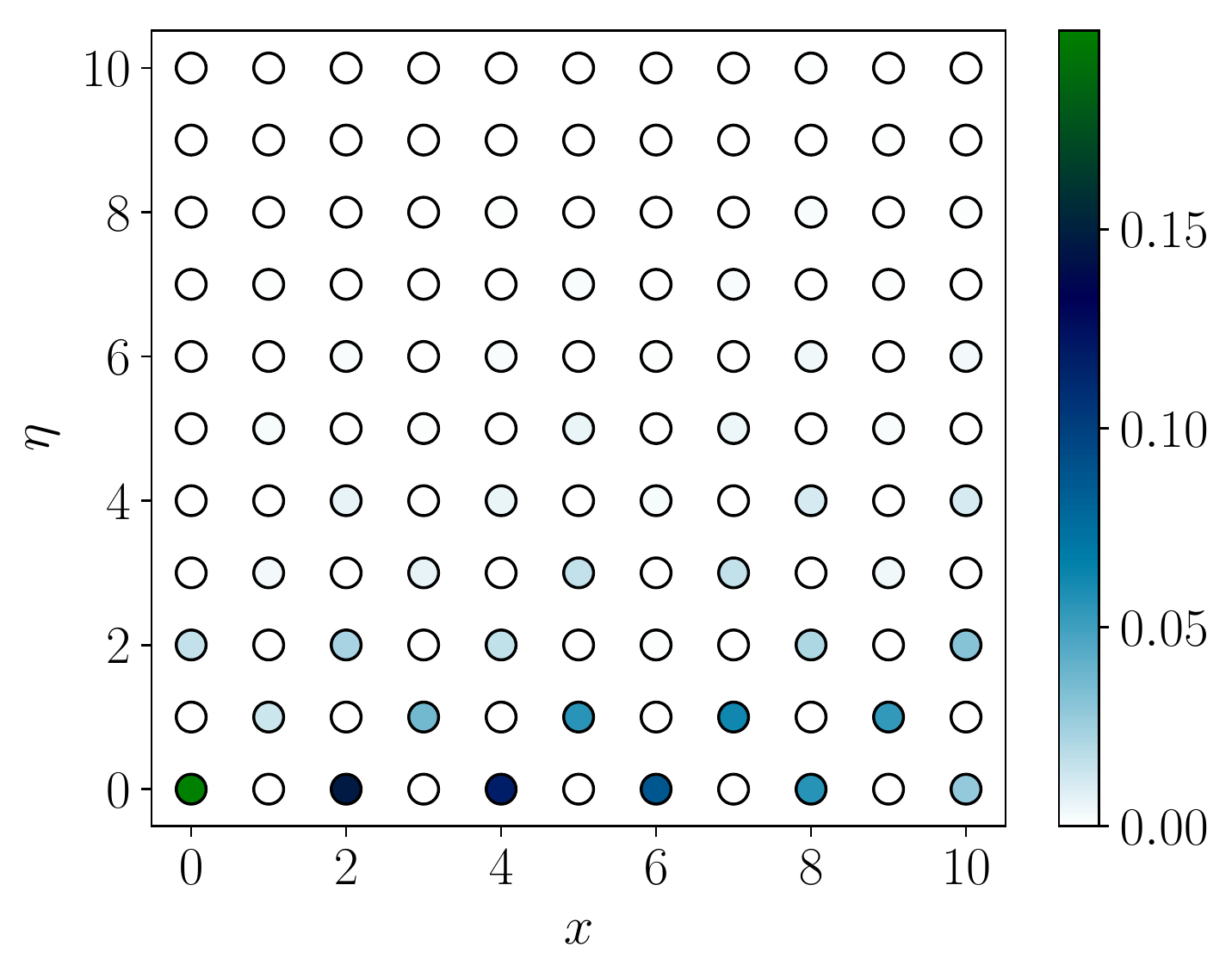}
        \subcaption{%
            Steady-state solution to \cref{eq:galerkin_approximation_colored_noise}
            and associated field lines of the probability flux (left),
            and absolute value of the coefficients of degree less than or equal to $10$ in the Hermite expansion (right).
        }%
        \label{fig:solution_density_bistable_case}
    \end{minipage}
    \begin{minipage}[b]{\linewidth}
        \centering%
        \includegraphics[width=.42\linewidth]{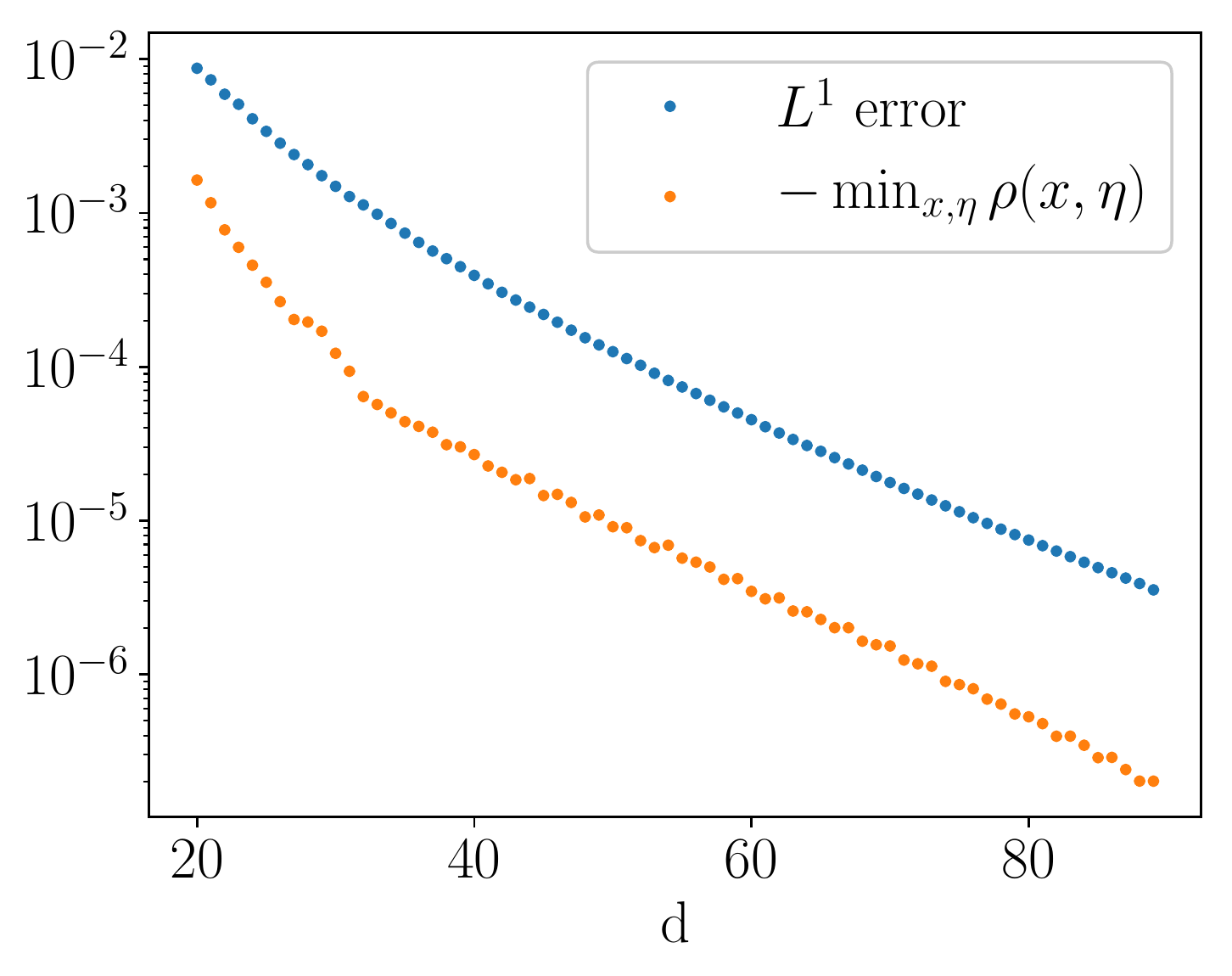}
        \includegraphics[width=.42\linewidth]{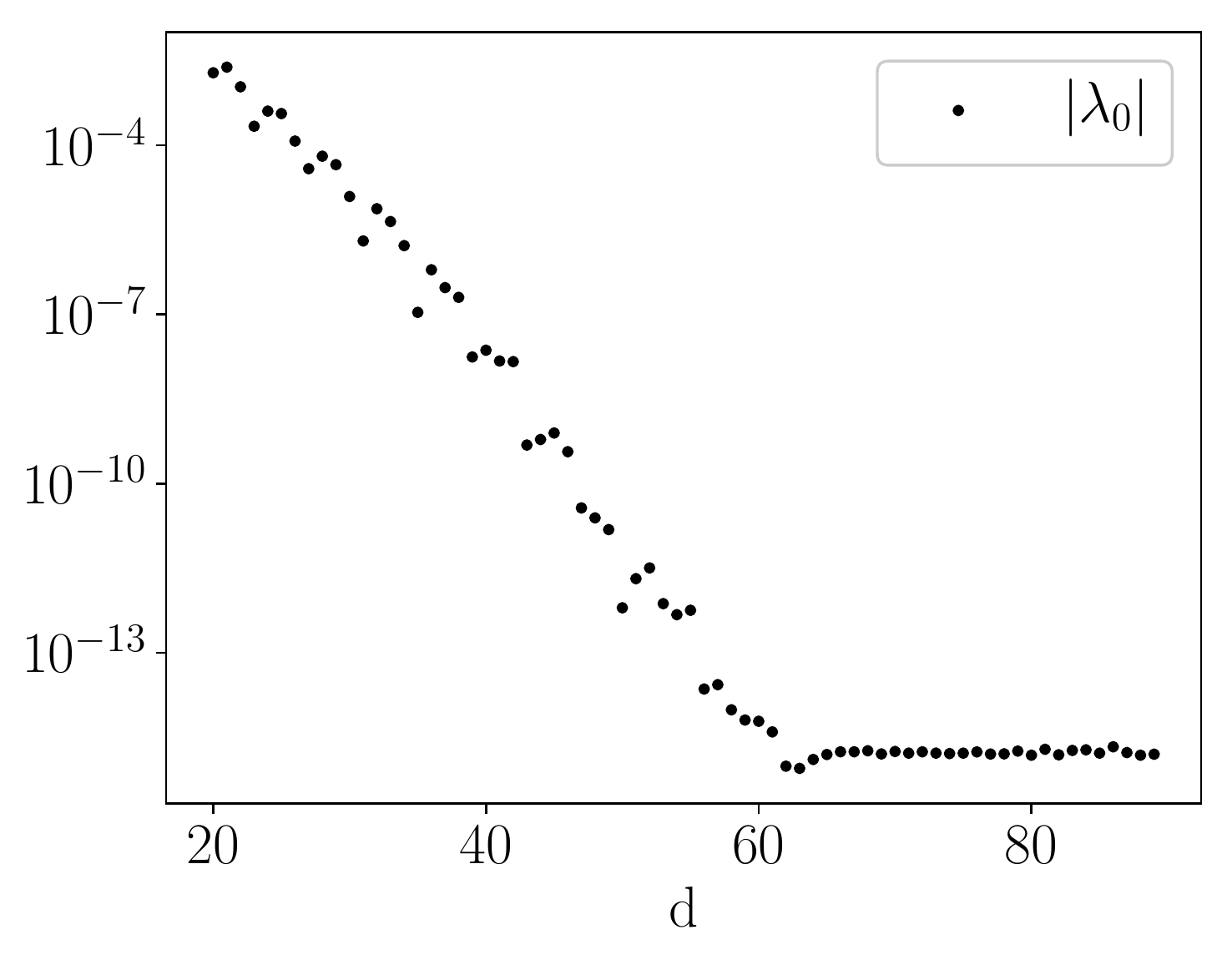}
        \subcaption{%
            Convergence of the method using the same measures of the error as in \cref{fig:solution_density_gaussian_case},
            except that the $L^1$ error is calculated by comparison with the numerical solution obtained when $d = 100$.
        }%
        \label{fig:bistable_potential_convergence_curves}
    \end{minipage}%
    \caption{%
        Simulation data when $V(x) = x^4/4 - x^2/2$.
    }%
\end{figure}
% \label{sub:nonlinear_equation_with_colored_noise}

\subsection{McKean--Vlasov equation with colored noise}%
We focus now on the Galerkin approximation to~\eqref{eq:mckean_vlasov_colored_noise}.
When the potential $V(\cdot)$ is quadratic and the initial condition is Gaussian,
it is well-known that the McKean--Vlasov equation has an explicit solution and that this solution is Gaussian.
We assume that $V(x) = x^2/2$ and
we rewrite \cref{eq:mckean_vlasov_colored_noise} in the formalism of~\cite{duongmean}, as
\begin{equation*}
    \label{eq:mckean_vlasov_colored_noise_matrix_form}
    \derivative{1}[\rho]{t} = - \grad \cdot \left(
        B \, \vect x \, \rho
        + \int_{\real^{2}} K(\vect x - \vect x') \, \rho(\vect x', t ) \, \d \vect x' \, \rho
        - D \, \grad \rho \right),
\end{equation*}
where $\vect x = (x, \eta)^T$ and
\begin{equation*}
    \label{eq:mckean_vlasov_colored_noise_matrix_form:parameters}
    B = \begin{pmatrix} -1 & \varepsilon^{-1} \beta^{-1/2} \\ 0 & -\varepsilon^{-2} \end{pmatrix},
    \quad K = \begin{pmatrix} -\theta & 0 \\ 0 & 0 \end{pmatrix},
    \quad D = \begin{pmatrix} 0 & 0 \\ 0 & \varepsilon^{-2} \end{pmatrix}.
\end{equation*}
Adapting \cite[Proposition 2.3]{duongmean} to our case,
we deduce that the solution is of the type
\begin{equation*}
    \label{eq:exact_solution_mckean_vlasov_gaussian_case}
    \rho(\vect x, t) = \frac{1}{(2\pi)\, \det{\Sigma(t)}} \, \exp \left( - \frac{1}{2} \, (\vect x - \mu(t))^T \Sigma^{-1}(t)(\vect x - \mu(t)) \right),
\end{equation*}
where $\mu(t)$ and $\Sigma(t)$ are given by
\begin{equation}
    \label{eq:exact_solution_mckean_vlasov_gaussian_case_mean_cov}
    \mu(t) = \e^{Bt} \, \mu(0),
    \quad \Sigma(t) = \e^{t(B+K)} \, \Sigma(0) \, \e^{t{(B+K)}^T} +  2 \, \int_{0}^{t} \e^{s(B + K)} \, D \,\e^{s{(B + K)}^T} \, \d s.
\end{equation}
This solution can be obtained by introducing $g = - \ln \rho$,
rewriting \cref{eq:mckean_vlasov_colored_noise} as an equation for $g$,
and using a quadratic ansatz for $g$.
The eigenvalue decomposition of $B + K$ is
\begin{equation*}
    \label{eq:mckean_vlasov_colored_noise_eigenvectors_and_eigenvalues}
    (B + K) \begin{pmatrix} 1 & - \varepsilon \\ 0 & \sqrt{\beta} \, (1 - \varepsilon^2(1+\theta)) \end{pmatrix}
    = \begin{pmatrix} 1 & - \varepsilon \\ 0 & \sqrt{\beta} \, (1 - \varepsilon^2(1+\theta)) \end{pmatrix} \,
    \begin{pmatrix} - 1 - \theta & 0 \\ 0 & - \varepsilon^{-2} \end{pmatrix},
\end{equation*}
which enables the explicit calculation of the integral in the expression of $\Sigma(t)$.
From \cref{eq:exact_solution_mckean_vlasov_gaussian_case_mean_cov} and the structure of $B$ and $K$,
we notice that, as $t \to \infty$, $\mu \to 0$ and
\begin{equation*}
    \label{eq:mckean_vlasov_colored_noise_covariance_matrix_equilibrium}
    \Sigma(t) \to \Sigma_{\infty} = 2 \int_{0}^{\infty}\e^{s(B + K)} \, D \, \e^{s{(B + K)}^T} \, \d s,
\end{equation*}
which coincides with the solution of the steady state linear Fokker--Planck equation corresponding to the McKean--Vlasov equation when $m$ is a parameter equal to 0.
For this test case,
we use the following parameters: $\beta = \theta = 1$, $\varepsilon = 1/2$, $\sigma_x^2 = \sigma_{\eta}^2 = 1/5$, $\e^{-U(x,\eta)/2} = \e^{-V(x)/2} \, \e^{- V_{\eta}(\eta)/2}$.
The initial condition is taken to be the Gaussian density $\mathcal N\left((1, 1)^T, I_{2\times2}\right)$.
The evolution of the probability density is illustrated in \cref{fig:mckean_vlasov_colored:snapshots_of_the_solution},
and the convergence of the method, in the $\lp{\infty}{0, T; \lp{1}{\real^2}}$ norm,
is illustrated in \cref{fig:mckean_vlasov_colored:convergence}.

\begin{figure}[ht]
    \centering
    \includegraphics[width=0.24\linewidth]{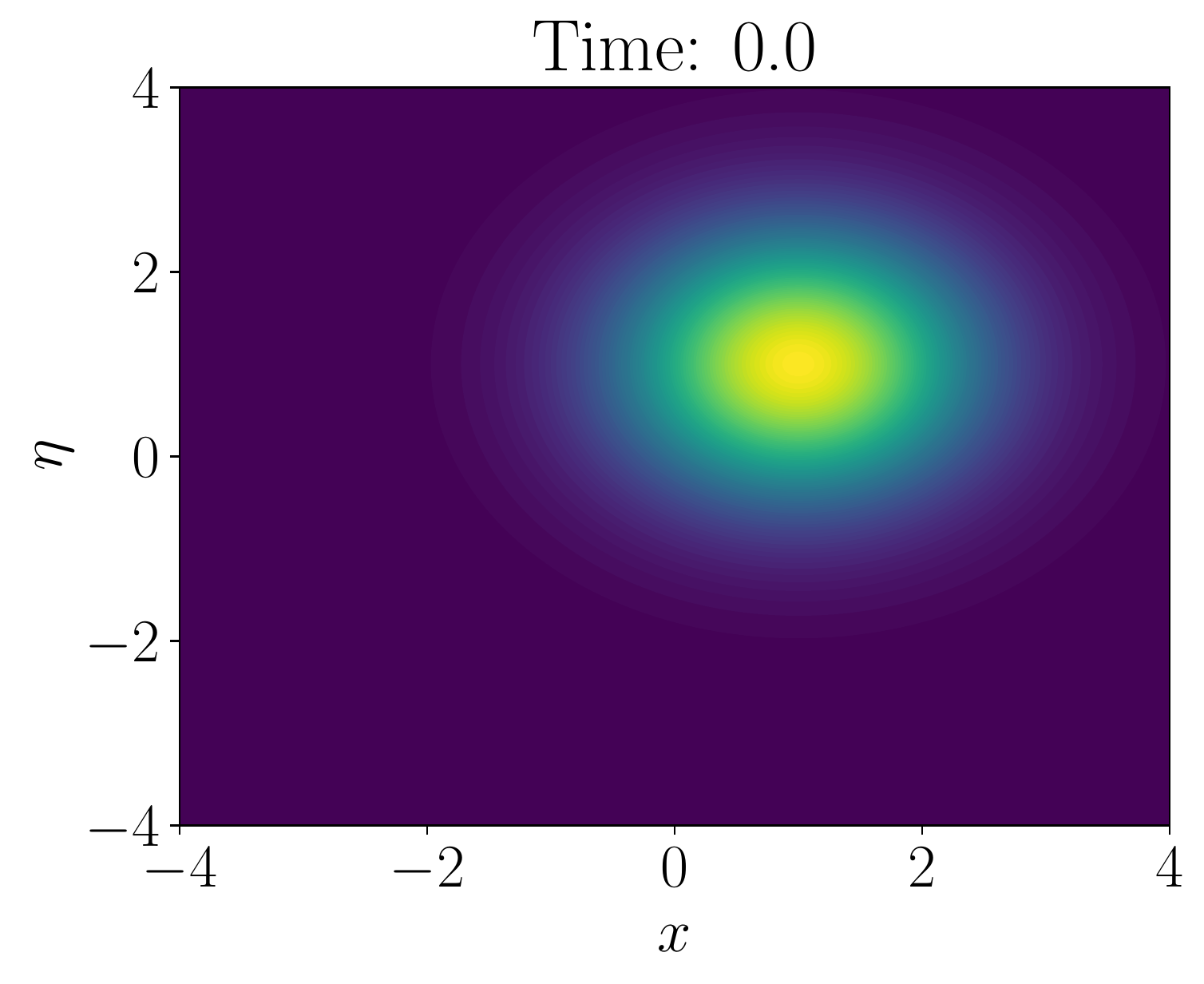}
    \includegraphics[width=0.24\linewidth]{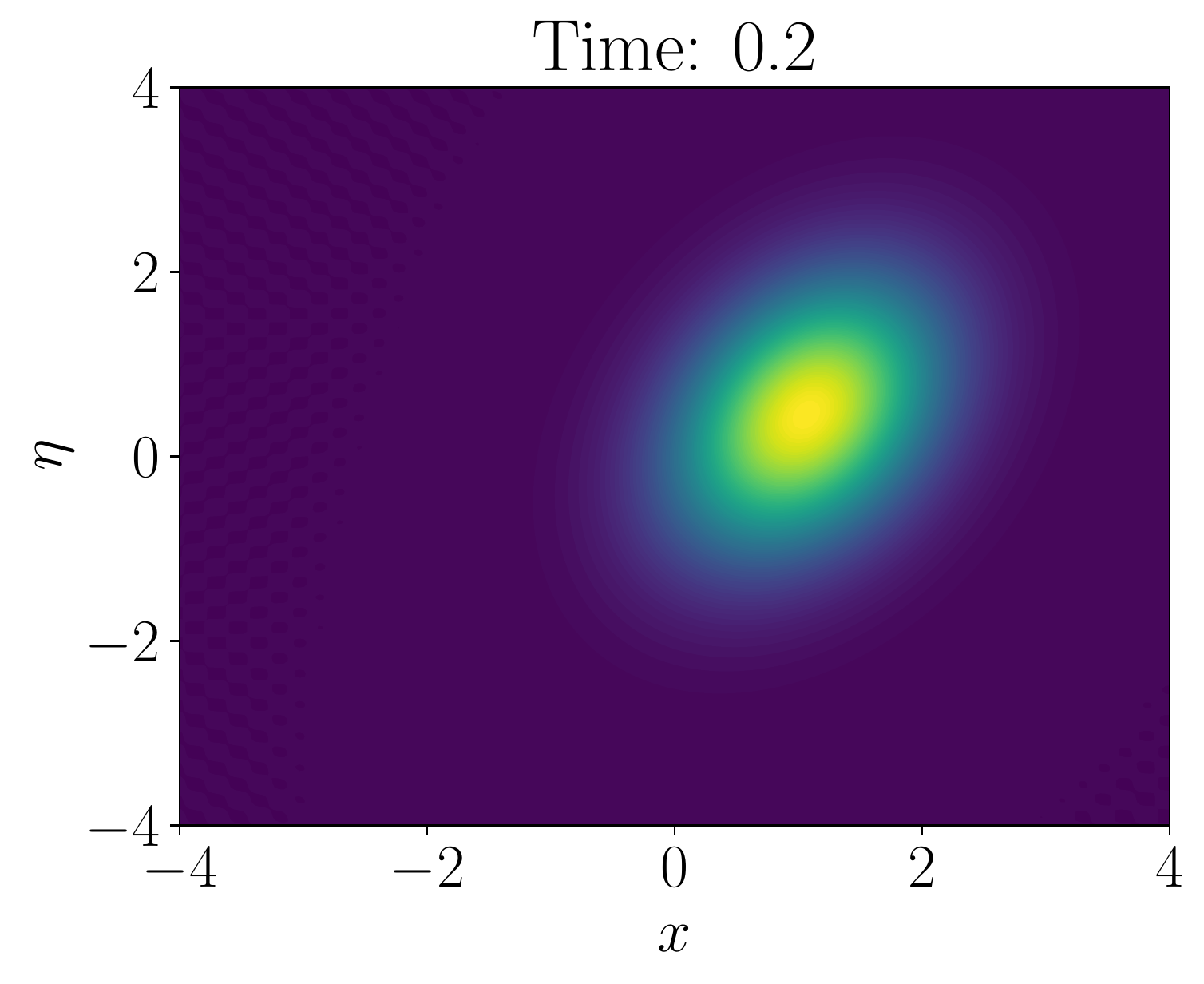}
    \includegraphics[width=0.24\linewidth]{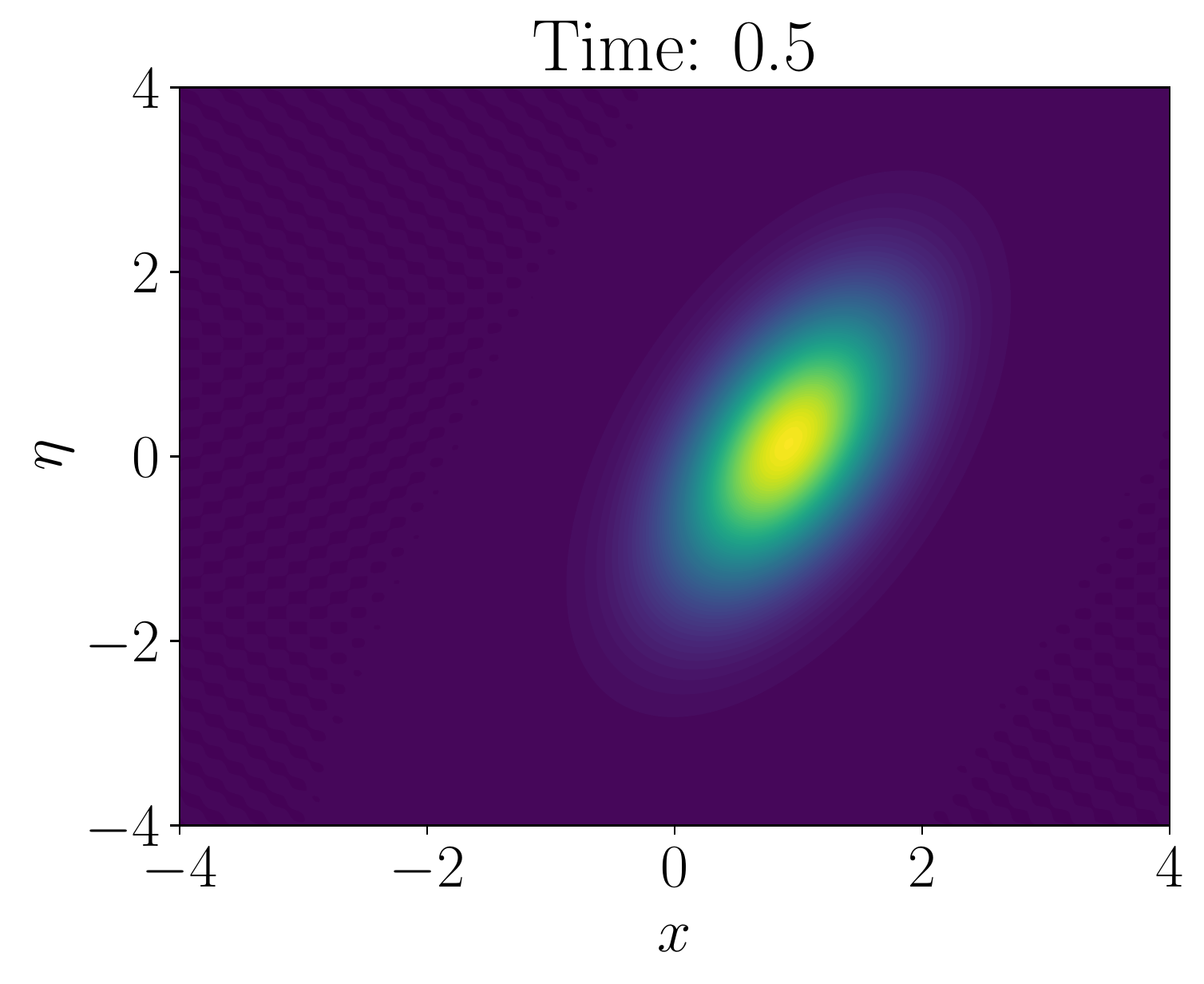}
    \includegraphics[width=0.24\linewidth]{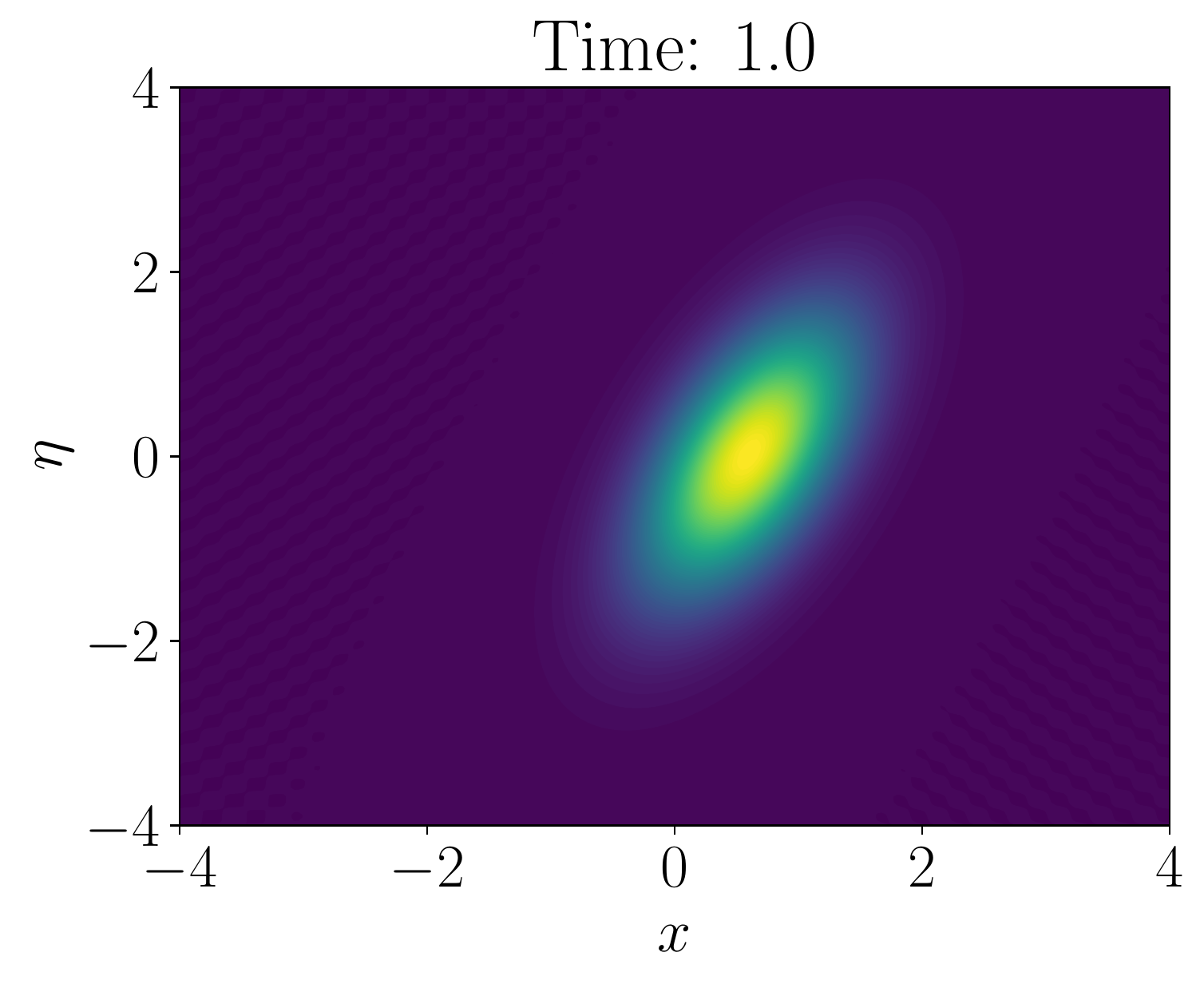}
    \caption{Probability density solution of \cref{eq:mckean_vlasov_colored_noise} (obtained using the spectral method) at times $0, 0.2, 0.5, 1$.}
    \label{fig:mckean_vlasov_colored:snapshots_of_the_solution}
\end{figure}

\begin{figure}[ht]
    \centering
    \includegraphics[width=0.6\linewidth]{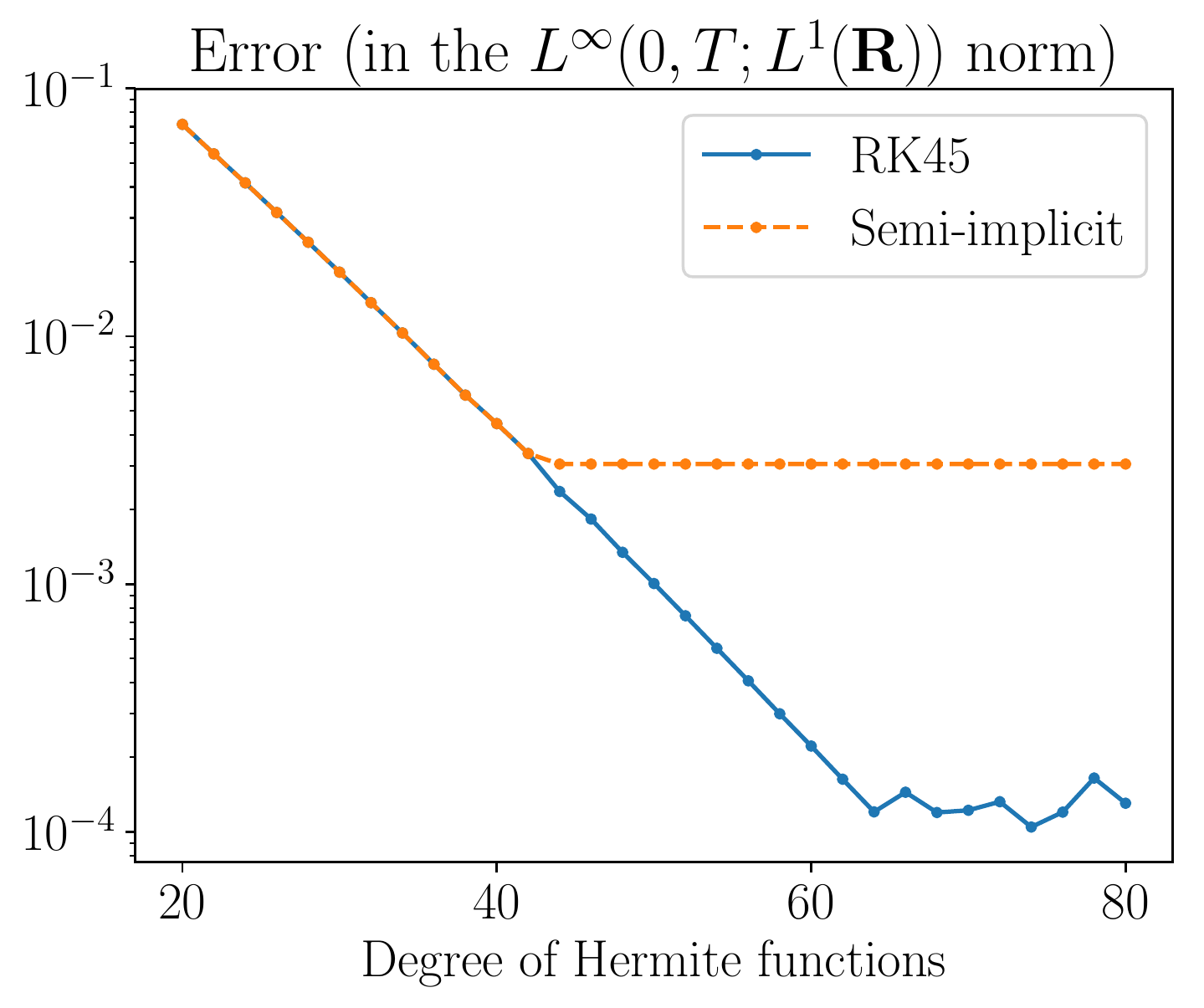}%
    \caption{%
        Convergence of the Hermite spectral method for the nonlinear McKean--Vlasov equation with colored noise.
        The error decreases exponentially for low enough values of $d$,
        and then reaches a plateau when
        it becomes dominated by the error induced by the time discretization.
    }
    \label{fig:mckean_vlasov_colored:convergence}
\end{figure}

\section*{Acknowledgments}
The authors are grateful to J.A. Carrillo for making available the finite volume code employed in \cref{sub:white_noise_case} for
the comparison with our spectral method for the McKean--Vlasov dynamics.
This work was supported by EPSRC through grants number EP/P031587/1, EP/L024926/1, EP/L020564/1 and EP/K034154/1.
The work of SG is supported by the Leverhulme Trust through Early Career Fellowship ECF-2018-536.

\bibliographystyle{abbrv}
\bibliography{references}

\end{document}